\documentclass{amsart}

\usepackage{amsthm}
\usepackage{amsmath}
\usepackage{amssymb}
\usepackage{amsfonts}
\usepackage{latexsym}
\usepackage[all]{xy} \SelectTips{eu}{}
\usepackage{hyperref}
\usepackage{eucal}
\usepackage{xcolor}
\usepackage{tikz}
\usepackage{wasysym}
\usepackage{mathrsfs}

\newtheorem{intthm}{Theorem}[]

\newtheorem*{intque*}{Question}
\newtheorem*{intexa*}{Example}
\newcommand{\numberseries}{\bfseries}   
\newlength{\thmtopspace}                
\newlength{\thmbotspace}                
\newlength{\thmheadspace}               
\newlength{\thmindent}                  
\newtheoremstyle{bfupright head,slanted body}
{\thmtopspace}{\thmbotspace}
{\slshape}{\thmindent}{\bfseries}{.}{\thmheadspace}
{{\numberseries \thmnumber{#2\;}}\thmnote{#3}}

\newtheoremstyle{bfupright head,upright body}
{\thmtopspace}{\thmbotspace}
{\upshape}{\thmindent}{\bfseries}{.}{\thmheadspace}
{{\numberseries \thmnumber{#2\;}}\thmnote{#3}}

\newtheoremstyle{fixed bf head,slanted body}
{\thmtopspace}{\thmbotspace}{\slshape}
{\thmindent}{\bfseries}{.}{\thmheadspace}
{{\numberseries \thmnumber{#2\;}}\thmname{#1}\thmnote{ (#3)}}

\newtheoremstyle{fixed bf head,upright body}
{\thmtopspace}{\thmbotspace}{\upshape}
{\thmindent}{\bfseries}{.}{\thmheadspace}
{{\numberseries \thmnumber{#2\;}}\thmname{#1}\thmnote{ (#3)}}

\newtheoremstyle{numbered paragraph}
{\thmtopspace}{\thmbotspace}{\upshape}
{\thmindent}{\upshape}{}{\thmheadspace}
{{\numberseries \thmnumber{#2.}}}

\theoremstyle{bfupright head,slanted body}
\newtheorem{res}{}[section]             \newtheorem*{res*}{}

\theoremstyle{bfupright head,upright body}
\newtheorem{bfhpg}[res]{}               \newtheorem*{bfhpg*}{}

\theoremstyle{fixed bf head,slanted body}
\newtheorem{thm}[res]{Theorem}          \newtheorem*{thm*}{Theorem}
\newtheorem{prp}[res]{Proposition}      \newtheorem*{prp*}{Proposition}
\newtheorem{cor}[res]{Corollary}        \newtheorem*{cor*}{Corollary}
\newtheorem{lem}[res]{Lemma}            \newtheorem*{lem*}{Lemma}
         \newtheorem*{que*}{Question}
\theoremstyle{fixed bf head,upright body}
\newtheorem{dfn}[res]{Definition}       \newtheorem*{dfn*}{Definition}
\newtheorem{rmk}[res]{Remark}           \newtheorem*{rmk*}{Remark}
           \newtheorem*{exa*}{Example}
\newtheorem{setup}[res]{Setup}           \newtheorem*{setup*}{Setup}
\newtheorem{nota}[res]{Notation}           \newtheorem*{nota*}{Notation}

\theoremstyle{numbered paragraph}
\newtheorem{ipg}[res]{}

\newlength{\thmlistleft}        
\newlength{\thmlistright}       
\newlength{\thmlistpartopsep}   
\newlength{\thmlisttopsep}      
\newlength{\thmlistparsep}      
\newlength{\thmlistitemsep}     

\newcounter{eqc}
\newenvironment{eqc}{\begin{list}{\upshape (\textit{\roman{eqc}})}%
		{\usecounter{eqc}%
			\setlength{\leftmargin}{\thmlistleft}%
			\setlength{\labelwidth}{\thmlistleft}%
			\setlength{\rightmargin}{\thmlistright}%
			\setlength{\partopsep}{\thmlistpartopsep}%
			\setlength{\topsep}{\thmlisttopsep}%
			\setlength{\parsep}{\thmlistparsep}%
			\setlength{\itemsep}{\thmlistitemsep}}}%
	{\end{list}}%

\newcounter{prt}
\newenvironment{prt}{\begin{list}{\upshape (\alph{prt})}%
		{\usecounter{prt}%
			\setlength{\leftmargin}{\thmlistleft}%
			\setlength{\labelwidth}{\thmlistleft}%
			\setlength{\rightmargin}{\thmlistright}%
			\setlength{\partopsep}{\thmlistpartopsep}%
			\setlength{\topsep}{\thmlisttopsep}%
			\setlength{\parsep}{\thmlistparsep}%
			\setlength{\itemsep}{\thmlistitemsep}}}%
	{\end{list}}%

\newcounter{rqm}
	{\end{list}}%

	{\end{list}}%

\newenvironment{prf*}[1][Proof]{%
	\begin{proof}[\bf #1]
		\setcounter{equation}{0}
		}
	{\end{proof}
}

\newcommand{\pgref}[1]{\ref{#1}}

\renewcommand{\eqref}[1]{(\pgref{eq:#1})}

\newcommand{\eqclbl}[1]{{\upshape(\textit{#1})}}
\newcommand{\proofofimp}[3][:]{\mbox{\eqclbl{#2}$\!\implies\!$\eqclbl{#3}#1}}
\numberwithin{equation}{res}
\def\urltilda{\kern -.15em\lower .7ex\hbox{\~{}}\kern .04em}

\newcommand{\Mod}[1]{{_{#1}\mathsf{Mod}}}

\newcommand{\Ker}[1]{\nobreak{\operatorname{Ker}#1}}

\newcommand{\Hom}[3][]{\operatorname{Hom}_{#1}(#2,#3)}

\newcommand{\Ext}[4][]{\operatorname{Ext}_{#1}^{#2}(#3,#4)}
\newcommand{\Tor}[4][]{\operatorname{Tor}_{#2}^{#1}(#3,#4)}
\newcommand{\Coker}[1]{\nobreak{\operatorname{Coker}#1}}
\newcommand{\A}{\mathcal{A}}

\newcommand{\FF}{\mathcal{F}}

\newcommand{\X}{\mathcal{X}}
\newcommand{\Y}{\mathcal{Y}}
\newcommand{\W}{\mathcal{W}}

\newcommand{\C}{\mathcal{C}}
\newcommand{\Q}{\mathcal{Q}}
\newcommand{\Di}{\mathsf{Dif}(A)}

\newcommand{\DX}{\mathsf{dg} \mathcal{X}}
\newcommand{\DY}{\mathsf{dg} \mathcal{Y}}
\newcommand{\WY}{\widetilde{\mathcal{Y}}}
\newcommand{\WX}{\widetilde{\mathcal{X}}}

\newcommand{\id}{\mathrm{id}}
\newcommand{\kk}{\Bbbk}
\newcommand{\is}{\cong}

\newcommand{\rMod}[1]{{\sf{Mod}}_{#1}}

\def\soft#1{\leavevmode\setbox0=\hbox{h}\dimen7=\ht0\advance
	\dimen7 by-1ex\relax\if t#1\relax\rlap{\raise.6\dimen7
		\hbox{\kern.3ex\char'47}}#1\relax\else\if T#1\relax
	\rlap{\raise.5\dimen7\hbox{\kern1.3ex\char'47}}#1\relax
	\else\if d#1\relax\rlap{\raise.5\dimen7\hbox{\kern.9ex
			\char'47}}#1\relax\else\if D#1\relax\rlap{\raise.5\dimen7
		\hbox{\kern1.4ex\char'47}}#1\relax\else\if l#1\relax
	\rlap{\raise.5\dimen7\hbox{\kern.4ex\char'47}}#1\relax
	\else\if L#1\relax\rlap{\raise.5\dimen7\hbox{\kern.7ex
			\char'47}}#1\relax\else\message{accent \string\soft
		\space #1 not defined!}#1\relax\fi\fi\fi\fi\fi\fi}

\begin{document}

\title[Model structures on the category of Q-shaped modules]%
{Model structures on the category of Q-shaped modules}

\author[Y.J. Ma]{Yajun Ma}

\address{Yajun Ma: Department of Mathematics, Lanzhou Jiaotong University, Lanzhou 730070, China}
\email{yjma@mail.lzjtu.cn}

\author[P. R. Yang]{Peiru Yang$^\ast$}
\address{Peiru Yang: School of Mathematics and Statistics, Northeast Normal University, Changchun, 730070, China}
\email{yangprlife@163.com}

\thanks{$^{\ast}$ Corresponding author.}

\thanks{Y. Ma was partly supported by NSF of Gansu Province (Grant No. 23JRRA866) and the Youth Foundation of Lanzhou Jiaotong University (Grant No. 2023023).}


\keywords{cotorsion pair, abelian model structure, differential module.}

\makeatletter
\@namedef{subjclassname@2020}{\textup{2020} Mathematics Subject Classification}
\makeatother
\subjclass[2020]{18G25, 18G20, 18G35}

\begin{abstract}
We develop a method for constructing abelian model structures on the category $\Mod{\Q,A}$ of $\Q$-shaped modules from cotorsion pairs in $\Mod{A}$, where $\Q$ is a small preadditive category satisfying certain conditions and $\Mod{A}$ denotes the category of left $A$-modules for any ring $A$.
More precisely, we construct two cotorsion pairs in  $\Mod{\Q,A}$ from a given cotorsion pair in $\Mod{A}.$
This leads to a construction of projective model structures on $\Mod{\Q,A}$ under the condition that $\Q$ has no
cycles.
We further apply this method to the category $\Di$ of differential left $A$-modules, viewed as a category of $\Q$-shaped modules for a suitable choice of $\Q$.
In this case, the induced cotorsion pairs are shown to be compatible, thereby giving rise to abelian model structures on $\Di$.
\end{abstract}
\maketitle
\section{Introduction}
\noindent
Model categories were introduced by Quillen in \cite{Qui67} to provide a uniform way of formally introducing homotopy theory into general categories. Today, model categories have become a central tool in numerous branches of mathematics, including algebraic topology, algebraic geometry, representation theory, and homological algebra;  see for instance Hovey \cite{Ho99}. A model category is a bicomplete category equipped with a model structure, which consists of three classes of morphisms, called weak equivalences, cofibrations, and fibrations respectively, satisfying certain axioms. Given a model category $\mathcal{C}$ with the class of weak equivalences $\mathcal{W}$, the associated model structure provides an explicit description of the localization $\mathcal{C}[\mathcal{W}^{-1}]$, which is equivalent to the homotopy category $\mathrm{Ho}(\mathcal C)$.

In 2002, Hovey \cite{Ho02} introduced abelian model structures, namely, model structures on abelian categories that are compatible with the underlying abelian structures, and established a bijective correspondence between abelian model structures and what are now known as Hovey triples. Based on this correspondence, Gillespie subsequently used cotorsion pairs in module categories to construct two compatible cotorsion pairs in the category of chain complexes, which in turn yield various model structures on chain complexes (see \cite{Gi04,G16}). It is well known that chain complexes over a ring $A$ can be viewed as $\Mod{A}$-valued representations of the mesh category $\Q$ of the repetitive quiver of $\overrightarrow{\mathbb{A}_{2}}=\bullet\rightarrow \bullet$. More precisely, the category $\mathrm{Ch}(A)$ of chain complexes of left $A$-modules is equivalent to the category $\Mod{\Q,A}$ of additive functors from $\Q$ to $\Mod{A}$, where $\Mod{A}$ is the category of left $A$-modules.

Recently, inspired by an observation of Iyama and Minamoto \cite{Iyama}, Holm and J{\o}rgensen constructed projective and injective model structures on the functor category $\Mod{\Q,A}$, respectively. These two model structures have the same homotopy category, which is called $\Q$-shaped derived category when $\Q$ is another suitable category equipped with a Serre functor (see \cite{HJ21,HJ23,HJ25}).

Motivated by these developments, in the present paper we develop a method for constructing model structures on $\Mod{\Q,A}$ via cotorsion pairs in $\Mod{A}.$
Specifically, we investigate How cotorsion pairs in $\Mod{A}$
can be used to construct cotorsion pairs in $\Mod{\Q,A}$. Unlike the case of the category of chain complexes, however, we cannot prove in general that the two induced cotorsion pairs are compatible. Nevertheless, we prove that, under suitable assumptions, a model structure on $\Mod{\Q,A}$ can still be constructed from a projective cotorsion pair in $\Mod{A}$ (see Theorem \ref{thm:projective model structure}).

To state our main results precisely, we first introduce the necessary notation and definitions.
Let $(\X,\Y)$ be a cotorsion pair in $\Mod{A}$ for any ring $A$. We define the following two classes in $\Mod{\Q,A}$ (see Definition \ref{df:cotorsion pair} below).
$$\quad \ \ \widetilde{\mathcal{Y}}= \{Y\in\Mod{\Q,A}\ |\ \Ext[\Q,A]{1}{S_{q}(C)}{Y}=0 ~\text{and} \ Y(q)\in \mathcal{Y} {\rm \ for~ any \ }C\in\mathcal{X}, ~q\in \Q \  \},$$
$$\quad \ \ \widetilde{\mathcal{X}}=\{X\in\Mod{\Q,A}\ |\ \Ext[\Q,A]{1}{X}{S_{q}(C)}=0\ \text{and}\ X(q)\in \mathcal{X} \text{ for any } C\in {\mathcal{Y}},~ q\in \Q \  \}.$$
We then set $\mathsf{dg} \mathcal{X}={^{\bot}\widetilde{\mathcal{Y}}}$ and $\mathsf{dg} \mathcal{Y}=\widetilde{\mathcal{X}}^{\bot}$.
For the notation of cycles in the category $\Q$, we refer to \cite[Definition 5.13]{HJ23} under the assumptions in Setup \ref{setup}.
The specific result is as follows, which is the special case of Proposition \ref{prop:cotorsion pair}, Theorems \ref{thm:complete cotorsion pair} and \ref{thm:projective model structure}.

\begin{intthm}\label{thmA}
Let $(\mathcal{X},\mathcal{Y})$ be a cotorsion pair in $\Mod{A}$.
Then $(\mathsf{dg} \mathcal{X},\widetilde{\mathcal{Y}})$ and $(\widetilde{\mathcal{X}},\mathsf{dg} \mathcal{Y})$ are cotorsion pairs in $\Mod{\Q,A}$.
 Moreover, assume that $(\mathcal{X},\mathcal{Y})$ is a projective cotorsion pair (Definition \ref{df:projective cotorsion pair}) generated by a set $\sf S$ containing a generator $U$ in $\Mod{A}$, and that
 $\Q$ has no cycles.
  Then $(\sf dg \mathcal{X},\widetilde{\mathcal{Y}})$ is a projective cotorsion pair in $\Mod{\Q,A}.$
 Consequently, there is an abelian model structure on $\Mod{\Q,A}$ in which $\mathsf{dg}\mathcal{X}$ is the class of cofibrant objects, $\widetilde{\mathcal{Y}}$ is the class of trivial objects, and every object is fibrant.
The corresponding homotopy category is equivalent to the stable category $$\mathsf{dg}\mathcal{X}/_{\Q,A}\mathsf{Prj},$$ where ${_{\Q,A}\mathsf{Prj}}$ denotes the full subcategory of projective objects in $\Mod{\Q,A}$.
\end{intthm}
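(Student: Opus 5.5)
Since $\mathsf{dg}\X={}^{\bot}\WY$ by definition, the first cotorsion-pair claim amounts to showing $\WY=(\mathsf{dg}\X)^{\bot}$; the argument for $(\WX,\mathsf{dg}\Y)$ is dual. The inclusion $\WY\subseteq({}^{\bot}\WY)^{\bot}$ is automatic. For the converse I would first show that $S_q(C)\in\mathsf{dg}\X$ for every $C\in\X$ and $q\in\Q$. This holds because $\Ext[\Q,A]{1}{S_q(C)}{Y}=0$ for $Y\in\WY$ is one of the defining conditions of $\WY$. Next I would use the Holm--J{\o}rgensen adjunctions to handle the componentwise condition. The functor $F_q=\Q(q,-)\otimes_{\kk} -$ is left adjoint to evaluation at $q$ and is exact. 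Under Setup~\ref{setup} it satisfies $\Ext[\Q,A]{1}{F_q(C)}{Y}\cong\Ext[A]{1}{C}{Y(q)}$. Hence $F_q(C)\in\mathsf{dg}\X$ for every $C\in\X$: indeed, for $Y\in\WY$ we have $Y(q)\in\Y$, and the Ext group vanishes. Now let $Y\in(\mathsf{dg}\X)^{\bot}$. Then $\Ext[A]{1}{C}{Y(q)}=\Ext[\Q,A]{1}{F_qC}{Y}=0$ for all $C\in\X$, so $Y(q)\in\X^{\bot}=\Y$. Also $\Ext[\Q,A]{1}{S_q(C)}{Y}=0$ because $S_q(C)\in\mathsf{dg}\X$. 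Therefore $Y\in\WY$. In the dual case one uses the right adjoint $G_q$ of evaluation.

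For projectivity, assume $(\X,\Y)$ is generated by a set $\mathsf S\ni U$ and that $\Q$ has no cycles. I would show that $(\mathsf{dg}\X,\WY)$ is cogenerated by the set $\{F_q(S),\,S_q(S)\;:\;S\in\mathsf S,\ q\in\Q\}$, together with a set of $F_q$'s of generators. This rests on $\mathsf{S}^{\bot}=\Y$ and on the claim that $\Ext[\Q,A]{1}{S_q(C)}{Y}=0$ for all $C\in\X$ already follows from the same vanishing for $C\in\mathsf S$. For that claim I would use $\X$ = direct summands of $\mathsf S$-filtered modules, the fact that $S_q$ is exact and preserves colimits, and Eklof's lemma. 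Eklof--Trlifaj then gives completeness. Since $F_q(U)$ are generators, $\mathsf{dg}\X$ contains a generating set, which yields functorial factorizations.

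To prove the pair is projective, I must show that $\mathsf{dg}\X\cap\WY$ equals the projectives and that $\WY$ is thick. The main obstacle is thickness, in particular closure under cokernels of monomorphisms. Here I would use the no-cycles hypothesis. With it, $\Q$-shaped modules admit finite-length filtrations by the $S_q$'s in the sense of \cite{HJ23}, so $\Ext[\Q,A]{i}{S_q(C)}{-}$ vanishes for $i\ge2$ on componentwise-$\Y$ objects, obtained by induction along the acyclic ordering; projective dimension is then bounded by dimension-shifting through $F_q$. Then, for $0\to Y'\to Y\to Y''\to0$, the long exact sequence gives $\Ext^1(S_qC,Y'')=0$, and closure of $\Y$ under cokernels of monos comes from $(\X,\Y)$ being projective. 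Finally, an object in $\mathsf{dg}\X\cap\WY$ is Ext-orthogonal to itself after a projective cover sequence, hence projective. Hovey's correspondence then gives the model structure with all objects fibrant, and the homotopy category is $\mathsf{dg}\X$ modulo projectives by the standard Gillespie result.
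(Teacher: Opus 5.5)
Your first part is the paper's proof of Proposition~\ref{prop:cotorsion pair}.

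\textbf{Completeness: a different, valid route.} You put $F_q(\mathsf S)$ into the cogenerating set, which gives $Y(q)\in\mathsf S^{\bot}=\Y$ immediately. Eklof's lemma then transfers the vanishing from $S_q(\mathsf S)$ to $S_q(\X)$, using that $\X$ consists of summands of $\mathsf S$-filtered modules and that $S_q$ is exact and cocontinuous. This step does not use the no-cycles hypothesis. The paper instead proves the sharper statement that $\{S_q(C)\mid C\in\mathsf S,\ q\in\Q\}$ alone generates the pair. To recover $Y(q)\in\Y$ it needs Lemma~\ref{lem:support}, the sequence $0\to S_q(C)\to\mathbb G(S_q(C))\to\mathbb C(S_q(C))\to 0$ and the Serre functor. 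That is the only place in the paper where no cycles is used.

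\textbf{The gap: thickness of $\WY$.} You correctly single this out as the crux, but your key claim is false even when $\Q$ has no cycles. The claim is that $\Ext[\Q,A]{i}{S_q(C)}{-}$ vanishes for $i\ge 2$ on componentwise-$\Y$ objects. The supporting assertions also fail:
\begin{itemize}
\item an arbitrary $\Q$-shaped module, for instance one of infinite support, has no finite filtration by stalk modules; Lemma~\ref{lem:support} only covers finite support;
\item $S_q(C)$ typically has infinite projective dimension.
\end{itemize}
For a concrete counterexample, take chain complexes and $(\X,\Y)=({}_A\mathsf{Prj},\Mod{A})$, so every object is componentwise in $\Y$. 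Here $\mathfrak r(q,-)\otimes_{\kk}A\cong S_{q'}(A)$ for the neighbouring vertex $q'$. The sequence $0\to S_{q'}(A)\to F_q(A)\to S_q(A)\to 0$ has $F_q(A)$ projective, so it gives $\Ext[\Q,A]{2}{S_q(A)}{Y}\cong\Ext[\Q,A]{1}{S_{q'}(A)}{Y}$. As $q$ runs through all vertices, so does $q'$. If your claim held, Lemma~\ref{lem:exact object1} would make every complex exact.

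\textbf{What is actually needed.} The vanishing of $\Ext[\Q,A]{2}{S_q(C)}{Y'}$ holds only because $Y'\in\WY$, and the missing idea is exactness.
\begin{itemize}
\item Since $A\in\X$, every object of $\WY$ is exact by Lemma~\ref{lem:exact object1}.
\item $R^{1}K_q=\mathbb H^{1}_{[q]}$ vanishes on exact objects \cite[Theorem 7.1]{HJ21}.
\item By Proposition~\ref{prop:KC}(b), $\WY=\{Y\mid Y \text{ exact},\ K_q(Y)\in\Y,\ Y(q)\in\Y \text{ for all } q\}$.
\end{itemize}
Now take a short exact sequence with two terms in $\WY$. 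All three terms are exact by thickness of $\mathscr E$, so applying $K_q$ yields a short exact sequence. Thickness of $\Y$ then gives both closure under cokernels of monomorphisms and closure under kernels of epimorphisms. No cycles plays no role here.

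You never treat closure under kernels of epimorphisms, yet your last step needs it: the kernel of $P\to X$ must lie in $\WY$.

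Your dimension-shifting idea can be repaired for cokernels of monomorphisms. Fix $Y'\in\WY$. The kernel $\mathfrak r(q,-)\otimes_{\kk}C$ of $F_q(C)\to S_q(C)$ has finite support with values in $\X$. So Lemma~\ref{lem:support}, applied to the class $\X$, kills its $\operatorname{Ext}^1$ into $Y'$. Meanwhile $\Ext[\Q,A]{2}{F_q(C)}{Y'}\cong\Ext[A]{2}{C}{Y'(q)}=0$ because $(\X,\Y)$ is hereditary. Hence $\Ext[\Q,A]{2}{S_q(C)}{Y'}=0$.

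\textbf{Projectives in $\WY$.} You never check ${}_{\Q,A}\mathsf{Prj}\subseteq\WY$. It is needed for the inclusion $\supseteq$ and for $P\in\WY$ in your splitting argument. It is not automatic, because it requires $\Ext[\Q,A]{1}{S_q(C)}{P}=0$. The paper obtains it from three facts:
\begin{itemize}
\item $P\cong\bigoplus_q F_qC_q(P)$;
\item $K_p$ preserves coproducts;
\item $K_pF_q\cong K_pG_{\mathbb S q}$.
\end{itemize}
Together these give $K_p(P)\cong C_{\mathbb S^{-1}p}(P)$, which is projective and hence in $\Y$. Proposition~\ref{prop:KC}(b) then gives $P\in\WY$.
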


We remark that, in the special case of chain complexes, the category $\Q$ has no cycles, and hence the assumption in Theorem \ref{thmA} is automatically satisfied.
However, this condition fails for the category of differential modules.
Indeed, when $\Q$ is taken to be the path category of the Jordan quiver with the relation $\varepsilon^{2}=0$,
\vspace{-2mm}
\begin{center}
\begin{tikzpicture}[baseline=-0.5ex]
\node (v) at (0,0) {$\bullet$};
\draw[->]
(v) to[out=35,in=-35,looseness=7]
node[right] {$\varepsilon$} (v);
\end{tikzpicture}
\end{center}
\vspace{-2mm}
 the category $\Mod{\Q,A}$ coincides with the category $\Di$ of differential left $A$-modules (see \cite{HJ23,HJ25}). For further results on differential modules, we refer the reader to \cite{LLAHBF2007,DN,RZ,Wei}. In this situation, we successfully induce two compatible cotorsion pairs in $\Di$ from cotorsion pairs in $\Mod{A}$, and consequently obtain an associated abelian model structure. The precise statement is the following, which is contained in Theorem \ref{thm:main result in D}.

\begin{intthm}
Let $(\mathcal{X},\mathcal{Y})$ be a complete hereditary cotorsion pair in $\Mod{A}$.
Then $(\widetilde{\mathcal{X}},\mathsf{dg}\mathcal{Y})$ and $(\mathsf{dg}\mathcal{X},\widetilde{\mathcal{Y}})$ are complete hereditary, and compatible cotorsion pairs in $\Di$.
Consequently, there is an abelian model structure on $\Di$ such that $\mathsf{dg}\mathcal{X}$, $\mathsf{dg}\mathcal{Y}$ and $\mathscr{E}$ are respectively  the classes of cofibrant objects, fibrant objects, and
 trivial objects, where $\mathscr{E}$ denotes the full subcategory of exact differential left $A$-modules in $\Di.$
The corresponding homotopy category is the $\Q$-shaped derived category, where $\Q$ is the path category of the Jordan quiver
\vspace{-2mm}
\[
\begin{tikzpicture}[baseline=-0.5ex]
\node (v) at (0,0) {$\bullet$};
\draw[->]
(v) to[out=35,in=-35,looseness=7]
node[right] {$\varepsilon$} (v);
\end{tikzpicture}
\]
\vspace{-2mm}
subject to the relation $\varepsilon^{2}=0$.
\end{intthm}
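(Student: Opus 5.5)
We work directly with $\Di$, identified with $\Mod{\Q,A}$ for $\Q$ the path category of the Jordan quiver with $\varepsilon^2=0$. Objects are pairs $(M,\delta)$ with $\delta^2=0$. Here $\Q$ has a single object $q$, and $S_q(C)$ is $C$ with zero differential. The functor $\Mod{A}\to\Di$ given by $C\mapsto D(C):=(C\oplus C,\ \delta(a,b)=(0,a))$ is both left and right adjoint to the forgetful functor. Hence $D(C)$ is projective (resp. injective) relative to the forgetful functor, and every $M$ fits into short exact sequences
\[
0\to M\to D(M)\to M\to 0 \qquad\text{and}\qquad 0\to M\to D(M)\to M\to 0,
\]
the first given by $m\mapsto(\delta m, m)$-type maps. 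These sequences play the role that disks and shifts play in Gillespie's chain complex argument.

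\textbf{Step 1: characterise the four classes.}
\begin{enumerate}
\item Using $\Ext[\Di]{1}{S_q(C)}{Y}$ computed via the resolution $0\to S_q(C)\to D(C)\to S_q(C)\to 0$, show that $\WY$ consists of the exact $Y$ (i.e.\ $\Ker\delta=\operatorname{Im}\delta$) such that every cycle module $Z(Y)=\Ker\delta$ lies in $\Y$. Dually, $\WX$ consists of the exact $X$ with $Z(X)\in\X$. Both of these need $\X$ to contain $\Y$-stuff via heredity.
\item Show $\mathsf{dg}\X=\{M : M\in\X \text{ underlying, and every map } M\to Y \text{ with } Y\in\WY \text{ is null-homotopic}\}$, via the identification $\Ext[\Di]{1}{M}{Y}\cong \Hom[\mathrm{K}]{M}{Y}$ for degreewise split extensions. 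Argue similarly for $\mathsf{dg}\Y$.
\item Use the standard Ext–Hom adjunction to obtain $\Ext{1}{D(C)}{M}\cong\Ext[A]{1}{C}{M}$.
\end{enumerate}

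\textbf{Step 2: completeness and heredity.} Follow Gillespie's strategy. The class $\WX$ is built from $\X$, and one shows $(\WX,\mathsf{dg}\Y)$ is cogenerated by a set, say $\{D(C), S_q(C)\}$ for $C$ ranging over a set giving the completeness of $(\X,\Y)$, possibly after enlarging with kappa-bounded objects. The cotorsion pair is then complete by Eklof–Trlifaj, and likewise for $(\mathsf{dg}\X,\WY)$. If the paper's Theorem \ref{thm:complete cotorsion pair} provides completeness in the general $\Q$-shaped setting, we invoke that instead.

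Heredity follows from the characterisations in Step 1. The class $\WX$ is closed under kernels of epimorphisms because exactness and closure of $\X$ under kernels of epis are both preserved by the snake lemma on cycles. For $\mathsf{dg}\X$, heredity is equivalent to coresolving-ness of $\WY$, which follows from $\Y$ being closed under cokernels of monos together with the same snake argument.

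\textbf{Step 3: compatibility.} We must show $\mathsf{dg}\X\cap\mathscr E=\WX$ and $\mathsf{dg}\Y\cap\mathscr E=\WY$. The inclusions $\WX\subseteq\mathsf{dg}\X\cap\mathscr E$ are formal: since $\X\perp\Y$, the relevant Ext groups vanish by the homotopy description. The reverse inclusion is the main obstacle. Given an exact $M\in\mathsf{dg}\X$, we need to show $Z(M)\in\X$. The plan is to take, for $C\in\Y$, an extension class in $\Ext[A]{1}{Z(M)}{C}$ and realise it inside $\Ext[\Di]{1}{M}{D(C)}$ or $\Ext[\Di]{1}{M}{S_q(C)}$-type objects that lie in $\WY$.

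Concretely, exactness gives a short exact sequence $0\to Z(M)\to M\xrightarrow{\delta} Z(M)\to 0$, so Ext from $M$ relates to Ext from $Z(M)$ through the long exact sequence. Combining this with heredity, which lets $\Ext{2}{-}{}$ vanish, and with the fact that $M\in\X$, we get the vanishing of $\Ext[A]{1}{Z(M)}{C}$. Unlike the chain complex case, the cycles and boundaries here live in the same module and the sequence is self-referential, so care is needed. If a direct argument fails, fall back on Hovey/Gillespie's criterion: two complete hereditary cotorsion pairs with $\WX\subseteq\mathsf{dg}\X$, $\WY\subseteq\mathsf{dg}\Y$ and $\WX\cap\mathsf{dg}\Y=\mathsf{dg}\X\cap\WY$ are compatible. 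The last equality is checked by showing both sides equal the contractible objects $D(C)$ with $C\in\X\cap\Y$.

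\textbf{Step 4: the model structure and its homotopy category.} Hovey's correspondence then gives the model structure with trivial objects $\mathscr E$. Its homotopy category is the Verdier quotient by exact objects, which is the $\Q$-shaped derived category as defined by Holm–Jørgensen.
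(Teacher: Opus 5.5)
Your heredity argument (snake lemma on cycles, using that homology vanishes) and your description of $\WX\cap\DY=\DX\cap\WY$ as the contractible objects $D(C)$ with $C\in\X\cap\Y$ agree with the paper. The proposal nevertheless has two genuine gaps.

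\textbf{Completeness.} Completeness is not established under the stated hypotheses. Eklof--Trlifaj needs $(\X,\Y)$ to be generated by a \emph{set}, but the theorem only assumes $(\X,\Y)$ complete and hereditary, so there is no ``set giving the completeness of $(\X,\Y)$''. Your fallback to Theorem~\ref{thm:complete cotorsion pair} fails as well. It needs a generating set containing a generator, and it needs $\Q$ to have no cycles; the loop $\varepsilon\in\mathfrak{r}$ is a cycle. Separately, a generating set for $(\WX,\DY)$ would have to lie in $\WX$, and $\WX$ contains no $S_q(C)$ with $C\neq 0$, since these are not exact.

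The paper uses no set-theoretic input. It starts from $0\to J\to E\to M\to 0$ with $E$ exact and $J$ semi-injective \cite[Theorem 5.9]{HJ21}. Applying completeness of $(\X,\Y)$ to $\Ker{d_E}$, and the horseshoe lemma to $0\to\Ker{d_E}\to E\to\Ker{d_E}\to 0$, gives $0\to Y'\to X'\to E\to 0$ with $X'\in\WX$ and $Y'\in\WY$. Pulling back along $J\to E$ gives $0\to K\to X'\to M\to 0$, where $K$ is an extension of $J$ by $Y'$, both of which lie in $\DY$. Salce's lemma then finishes the argument.

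\textbf{Trivial objects.} The inclusion $\DX\cap\mathscr{E}\subseteq\WX$ is not proved. Your long-exact-sequence argument uses only that $M$ is exact, that $M\in\X$, and heredity, and that cannot suffice. Take $A=\mathbb{Z}/4$, $(\X,\Y)=({_{A}\mathsf{Prj}},\Mod{A})$, and $M=\mathbb{Z}/4$ with $d$ equal to multiplication by $2$. Then $M$ is exact with free underlying module, yet $\Ker{d}=2\mathbb{Z}/4$ is not projective. You must use $\Ext[\Di]{1}{M}{T}=0$ for a well-chosen $T\in\WY$.

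The missing idea is the paper's test object. Fold an injective resolution $0\to C\to I^0\to I^1\to\cdots$ of $C\in\Y$ into $T(C)=C\times\prod_{n\geq 0}I^n$ with the shift differential. This is exact, and its kernel is a product of cosyzygies, hence lies in $\Y$ by heredity; so $T(C)\in\WY$. A map $f:\Ker{d_M}\to C$ lifts inductively to a morphism $M\to T(C)$, using exactness of $M$ and injectivity of the $I^n$. That morphism is null-homotopic because $M\in\DX$ (Lemma~\ref{homotopic}). The $C$-component of the homotopy then extends $f$ along $\Ker{d_M}\hookrightarrow M$.

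Your fallback via Gillespie's criterion would, granting completeness, give some Hovey triple $(\DX,\mathcal{W},\DY)$. But you never show $\mathcal{W}=\mathscr{E}$. The inclusion $\mathcal{W}\subseteq\mathscr{E}$ is clear, but $\mathscr{E}\subseteq\mathcal{W}$ is equivalent, for that triple, to the same missing inclusion. The alternative is to produce, for each exact $M$, a sequence $0\to Y'\to X'\to M\to 0$ with $X'\in\WX$ and $Y'\in\WY$; the horseshoe construction above does exactly this, but it is not in your proposal. Without $\mathcal{W}=\mathscr{E}$, you obtain neither the stated class of trivial objects nor the identification of the homotopy category with the $\Q$-shaped derived category.
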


The paper is organized as follows. In Section 2, we recall some notations, the definitions of cotorsion pairs and their relationship with abelian model structures, and some basic homological properties used throughout the paper. In Section 3, we construct cotorsion pairs in $\Mod{\Q,A}$ from cotorsion pairs in $\Mod{A}.$  In particular,  we establish projective model structures on $\Mod{\Q,A}$ arising from projective cotorsion pairs in $\Mod{A}.$
In Section 4, we develop a systematic method for constructing abelian model structures on the category of differential modules by using the induced cotorsion pairs in Section 2.

\section{Preliminaries}\label{property}
\noindent
Throughout this paper,
we assume that $\kk$ is a commutative ring and $A$ is an associative $\kk$-algebra, and assume that $\Q$ is a small $\kk$-linear category.
We denote by $\Mod{A}$ the category of left $A$-modules and by $_{A}\mathsf{Prj}$ the subcategory of $\Mod{A}$ consisting of projective left $A$-modules.
\begin{bfhpg}[\bf Cotorsion pairs]\label{cotpair}
Let $\mathcal{A}$ be an abelian category.
If $\X$ and $\Y$ are class of objects of $\mathcal{A}$, then we write
$$\quad \ \ {\X}^{\bot}=\{N\in {\mathcal{A}}\ |\ {\rm Ext}_{\A}^{1}(X,N)=0 {\rm \ for\ all}\ X\in\X\},\ \text{and}$$
$$^{\bot}{\Y}=\{M\in {\mathcal{A}}\ |\ {\rm Ext}_{\A}^{1}(M,Y)=0 {\rm \ for\ all}\ Y\in\Y\}.$$
A pair $(\X, \Y)$ of subcategories of $\A$ is called a \emph{cotorsion pair} if $\X^{\bot}=\Y$ and $^{\bot}\Y=\X$.

Recall from e.g. Enochs and Jenda \cite{rha} that a cotorsion pair $(\X,\Y)$ is \emph{complete} if for each object $M$ in $\A$ there is an exact sequence
$$0 \to M \to Y \to X \to 0 \quad \text{and}\quad 0 \to Y' \to X' \to M \to 0$$ with $X, X'\in\X$ and $Y,Y'\in\Y$. A cotorsion pair $(\X,\Y)$ is called \emph{hereditary} if $\X$ is closed under kernels of epimorphisms and $\Y$ is closed under cokernels of monomorphisms.

Let $(\X,\Y)$ be a cotorsion pair, and let $\mathsf{S}$ be a class of objects of $\A$. If $\mathsf{S}^{\bot}=\Y$, we say that $(\X,\Y)$ is generated by $\mathsf{S}$.
If $^{\bot}{\mathsf{S}}=\X$, we say that $(\X,\Y)$ is cogenerated by $\mathsf{S}$.
\end{bfhpg}

The following notion of projective/injective cotorsion pairs can be found in \cite[Definition 3.4]{Gil16}.

\begin{dfn}\label{df:projective cotorsion pair}
Let $\A$ be an abelian category with enough projectives and injectives. A complete cotorsion pair $(\X, \Y)$ is said to be \emph{projective} if $\Y$ is thick (that is, if two out of three of the terms in a short exact sequence are in $\Y$, then so is the third) and $\X\cap\Y$ coincides with the class of projective objects. The notion of \emph{injective} cotorsion pairs is defined dually.
\end{dfn}

\begin{rmk}\label{rmk:projective model structure}
A projective cotorsion pair $(\C, \W)$ determines a projective model structure on $\A$ such that $\C$ is the class of cofibrant objects, $\W$ is the class of trivial objects, and all objects in $\A$ are fibrant objects. Dually, an injective cotorsion pair $(\W, \FF)$ determines an injective model structure on $\A$ such that $\FF$ is the class of fibrant objects, $\W$ is the class of trivial objects, and all objects in $\A$ are cofibrant objects.
\end{rmk}

\begin{dfn}
Recall from \cite[Section 1]{G15} that the cotorsion pairs $(\mathcal{U},\mathcal{V})$ and $(\mathcal{X},\mathcal{Y})$ in abelian category $\mathcal{A}$ are \emph{compatible} if
 $\Ext[\Q,A]{1}{\mathcal{X}}{\mathcal{V}}=0$ and
 $\mathcal{X}\cap\mathcal{Y}=\mathcal{U}\cap\mathcal{V}$.
\end{dfn}

\begin{rmk}
Let $(\mathcal{U},\mathcal{V})$ and $(\mathcal{X},\mathcal{Y})$  be  compatible, complete and hereditary cotorsion pairs in an abelian category $\mathcal{A}$. Then by \cite[Theorem 1.1]{G15}, there exists an abelian model structure such that $\mathcal{U}$ is the class of cofibrant objects, $\mathcal{Y}$ is the class of fibrant objects, and $\mathcal{W}$ is the class of trivial objects, where
\begin{align*}
\mathcal{W} &= \Big\{M\in \mathcal{A} \mid
\exists\ s.e.s\
 0\to V\to X\to M\to 0
 ~~\text{with } X\in \mathcal{X} ~~\text{and}~~V\in \mathcal{V} \Big\},\\
 &=\Big\{M\in \mathcal{A} \mid
\exists\ s.e.s\
0\rightarrow M\rightarrow V'\rightarrow X'\rightarrow 0
~~\text{with } X'\in \mathcal{X} ~~\text{and}~~V'\in \mathcal{V} \Big\}.
\end{align*}
\end{rmk}

\begin{bfhpg}[\bf The category of $\Q$-shaped modules]\label{Q-shaped modules}
 We consider the category $\Mod{\Q,A}$ of $\kk$-linear functors from $\Q$ to the
 category $\Mod{A}$ of left $A$-modules, which will be referred to as {\bf category of $\Q$-shaped $A$-modules}.
 Let ${_{\Q,A}\mathsf{Prj}}$ denote the full subcategory consisting of all projective objects in $\Mod{\Q,A}$. When $A=\kk$, we write $\Mod{\Q}$ instead of $\Mod{\Q,\kk}$. Further symbols along with references to where they appear in \cite{HJ21,HJ23} will follow.
\end{bfhpg}

\begin{ipg}\label{HLSR}
Consider the following setup from  \cite[Setup 2.5]{HJ21}:
\begin{enumerate}
\item Hom-finiteness: Each hom $\kk$-module $\Q(p,q)$ is finitely generated and projective.
\item  Local boundedness: For any object $q$ in $\Q$, there are finitely many objects $p$ in $\Q$ such that $\Q(q, p) \neq 0$ and $\Q(p, q) \neq 0$.
\item Existence of a Serre functor: There exists a Serre functor, that is a $\kk$-linear auto-equivalence $\mathbb{S}: \Q \to \Q$ such that $\Q(p,q)\is\Hom[\kk]{\Q(q,\mathbb{S}(p))}{\kk}.$
\item Strong Retraction Property: For any object $q$ in $\Q$, the unit map $\kk\to\Q(q,q)$ (via $x\mapsto x\cdot\id_q$) has a $\kk$-module retraction; whence there is a $\kk$-module decomposition  $$\Q(q,q)=(\Bbbk\cdot{\rm id}_q)\oplus\mathfrak{r}_q$$ for $q\in \Q$ such that
\begin{itemize}
\item  $\mathfrak{r}_q\circ\mathfrak{r}_q\subseteq\mathfrak{r}_q$ for all $q$, and
\item $\Q(q,p)\circ \Q(p,q)\subseteq\mathfrak{r}_p$ for all $p\neq q$.
\end{itemize}
\end{enumerate}
\end{ipg}

\begin{ipg}\label{adjoint triple2}
Assume that $\Q$ satisfies the strong retraction property.
Then $\Q$ admits a pseudo-radical $\mathfrak{r}.$
Using the pseudo-radical $\mathfrak{r}$, one can define stalk functors:
$$S\langle q\rangle=\Q(q,-)/\mathfrak{r}(q,-)\in \Mod{\Q} \quad \text{and}\quad S\{ q\}=\Q(-,q)/\mathfrak{r}(-,q)\in \rMod{\Q},$$
see \cite[Lemma 7.7 and Definition 7.9]{HJ21}.
Recall from \cite[Proposition 7.15]{HJ21} that for every object $q$ in $\Q$, there is an adjoint triple $(C_{q},S_{q},K_{q})$ as follows:
  \begin{equation*}
  \xymatrix@C=4pc{
    \Mod{A}
    \ar[r]^-{S_{q}}
    &
    \Mod{\Q,A}
    \ar@/_1.8pc/[l]_-{C_{q}}
    \ar@/^1.8pc/[l]^-{K_{q}}
  }
  \qquad \text{given by} \qquad
  {\setlength\arraycolsep{1.5pt}
   \renewcommand{\arraystretch}{1.2}
  \begin{array}{rcl}
  C_q(X) &=& S\{q\} \otimes_\Q X \\
  S_q(M) &=& S\langle q\rangle\otimes_\kk M \\
  K_{q}(X) &=& \Hom[\Q]{S\langle q\rangle}{X}\;
  \end{array}
  }
  \end{equation*}
for $M\in \Mod{A}$ and $X\in \Mod{\Q,A}$.

  Using the stalk functors, one can for each $q\in\Q$ and $i\geq 0$ define (co)homology functors:
$$\mathbb{H}_{[q]}^{i}=\Ext[\Q]{i}{S\langle q\rangle}{-}\quad \text{and}\quad \mathbb{H}^{[q]}_{i}=\Tor[\Q]{i}{S\{q\}}{-},$$
which are functors from $\Mod{\Q,A}$ to $\Mod{A}$; see \cite[Definition 7.11]{HJ21}.
\end{ipg}

\begin{setup}\label{setup}
We now work with the following setup:
\begin{enumerate}
\item $\Q$ is a Hom-finite, local bounded $\kk$-preadditive category with a Serre functor and has the strong retraction property.
\item The pseudo-radical $\mathfrak{r}$ is nilpotent, that is, $\mathfrak{r}^{N}=0$ for some $N\in\mathbb{N}.$
\item The ring $\kk$ is noetherian and hereditary.
\end{enumerate}
\end{setup}

\begin{ipg}\label{adjoint triple}
 Recall from \cite[Corollary 3.9]{HJ21} that for every object $q$ in $\Q$ there is an adjoint triple $(F_{q},E_{q},G_{q})$ as follows:
  \begin{equation*}
  \xymatrix@C=4pc{
    \Mod{\Q,A}
    \ar[r]^-{E_{q}}
    &
    \Mod{A}
    \ar@/_1.8pc/[l]_-{F_{q}}
    \ar@/^1.8pc/[l]^-{G_{q}}
  }
  \qquad \text{given by} \qquad
  {\setlength\arraycolsep{1.5pt}
   \renewcommand{\arraystretch}{1.2}
  \begin{array}{rcl}
  F_q(M) &=& \Q(q,-) \otimes_\kk M \\
  E_q(X) &=& X\mspace{1.5mu}(q) \\
  G_{q}(M) &=& \Hom[\kk]{\Q(-, q)}{M}\;
  \end{array}
  }
  \end{equation*}
for $M\in \Mod{A}$ and $X\in \Mod{\Q,A}$.
Note that $E_{q}$ is the evaluation functor at $q.$
\end{ipg}

It is clear that the \textit{evaluation} functor $E_q$ is exact. Since $\Q(p,q)$ is a projective $\kk$-module for each pair of objects $p$ and $q$ in $\Q$ by the assumption, both the functors $F_q$ and $G_q$ are exact as well. Then by \cite[Lemma 5.1]{HJ19}, we have the following result.

\begin{lem}\label{n-iso}
Let $X$ be an object in $\Mod{\Q,A}$ and $q$ an object in $\Q$. Then for each $M \in \Mod{A}$ and $n \geqslant 0$, there are isomorphisms
$$\Ext[\Q,A]{n}{F_q(M)}{X}\is\Ext[A]{n}{M}{X(q)}$$
and
$$\Ext[A]{n}{X(q)}{M}\is\Ext[\Q,A]{n}{X}{G_q(M)}.$$
\end{lem}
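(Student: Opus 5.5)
The two isomorphisms come from the adjunctions $F_q \dashv E_q \dashv G_q$ of \ref{adjoint triple}, combined with the exactness of $F_q$, $E_q$ and $G_q$. For $n=0$ they are exactly the adjunction isomorphisms
$$\Hom[\Q,A]{F_q(M)}{X}\is\Hom[A]{M}{X(q)} \quad\text{and}\quad \Hom[A]{X(q)}{M}\is\Hom[\Q,A]{X}{G_q(M)},$$
natural in both variables. For $n\geq 1$ I would compute the Ext groups by resolutions and transport them across these adjunctions.

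For the first isomorphism, take a projective resolution $P_\bullet\to M$ in $\Mod{A}$.
\begin{enumerate}
\item Since $F_q$ is exact, $F_q(P_\bullet)\to F_q(M)$ is again a resolution.
\item Since $F_q$ is left adjoint to the exact functor $E_q$, it preserves projectives: $\Hom[\Q,A]{F_q(P)}{-}\is\Hom[A]{P}{E_q(-)}$ is a composite of exact functors. So $F_q(P_\bullet)$ is a projective resolution of $F_q(M)$.
\item Applying $\Hom[\Q,A]{-}{X}$ and using the natural adjunction isomorphism termwise gives an isomorphism of complexes $\Hom[\Q,A]{F_q(P_\bullet)}{X}\is\Hom[A]{P_\bullet}{X(q)}$.
\item Taking cohomology yields $\Ext[\Q,A]{n}{F_q(M)}{X}\is\Ext[A]{n}{M}{X(q)}$.
\end{enumerate}

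The second isomorphism is the dual argument. Take an injective resolution $M\to I^\bullet$ in $\Mod{A}$.
\begin{enumerate}
\item Since $G_q$ is exact, $G_q(M)\to G_q(I^\bullet)$ is a resolution.
\item Since $G_q$ is right adjoint to the exact functor $E_q$, it preserves injectives.
\item Applying $\Hom[\Q,A]{X}{-}$ and using adjunction termwise gives $\Hom[\Q,A]{X}{G_q(I^\bullet)}\is\Hom[A]{X(q)}{I^\bullet}$.
\item Taking cohomology yields $\Ext[A]{n}{X(q)}{M}\is\Ext[\Q,A]{n}{X}{G_q(M)}$.
\end{enumerate}

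The only point needing care is the exactness of $F_q$ and $G_q$. This is where Hom-finiteness from Setup \ref{setup} enters: each $\Q(q,p)$ is a finitely generated projective $\kk$-module, so $\Q(q,-)\otimes_\kk-$ and $\Hom[\kk]{\Q(-,q)}{-}$ are exact objectwise, hence exact in $\Mod{\Q,A}$. Everything else is the standard principle, recorded as \cite[Lemma 5.1]{HJ19}, that an adjoint pair of exact functors induces isomorphisms on all Ext groups.
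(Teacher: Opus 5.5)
Your proposal is correct and is essentially the paper's argument: the paper notes that $E_q$ is exact and that $F_q$ and $G_q$ are exact because each $\Q(p,q)$ is $\kk$-projective, then invokes \cite[Lemma 5.1]{HJ19}, which is precisely the resolution-plus-adjunction argument you write out. Only projectivity of the hom modules, not their finite generation, is needed for that exactness, but this does not affect your proof.
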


\begin{bfhpg}[\bf Exact objects]\label{exact object}
Recall from \cite[Definition 4.1]{HJ21} that there is an equality
\begin{align*}
    \mathscr{E}
    &= \{X\in \Mod{\Q,A}\ |~ X^{\natural}~\text{has finite projective dimension}~\}\\
    &= \{X\in \Mod{\Q,A}\ |~ X^{\natural}~\text{has finite injective dimension}~\},
  \end{align*}
where $(-)^{\natural}$ is the forgetful functor; see \cite[Definition 3.2]{HJ21}.
Objects in $\mathscr{E}$ are called \emph{exact}.
This generalizes the exact complexes.
By \cite[Theorem 6.1]{HJ21}, $(^{\bot}\mathscr{E},\mathscr{E},\Mod{\Q,A})$ and
$(\Mod{\Q,A},\mathscr{E},\mathscr{E}^{\bot})$ are hereditary Hovey triples.
Recall from \cite{DN} that objects in $^{\bot}\mathscr{E}$ are called \emph{semi-projective}, and objects in $\mathscr{E}^{\bot}$ are called \emph{semi-injective}.
\end{bfhpg}

The following result is contained in the proof \cite[Theorem D]{HJ23}; but we present it as a lemma for the reader's convenience.

\begin{lem}\label{lem:exact object1}
Let $X\in \Mod{\Q,A}$.
Then $X$ is exact if and only if $$\Ext[\Q,A]{1}{S_{q}(A)}{X}=0$$ for each $q\in\Q.$
\end{lem}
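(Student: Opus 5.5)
The argument reduces the Ext-vanishing condition to the vanishing of the cohomology functors $\mathbb{H}^1_{[q]}$. Then it invokes the characterization of exact objects from \cite{HJ21,HJ23}: $X$ is exact if and only if $\mathbb{H}^{i}_{[q]}(X)=0$ for all $q\in\Q$ and all $i\geq 1$. Under Setup \ref{setup} this is \cite[Theorem 7.1/Theorem 7.22]{HJ21} or the corresponding statement in \cite{HJ23}.

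\textbf{Step 1: identify the Ext group with $\mathbb{H}^1_{[q]}$.} We have $S_q(A)=S\langle q\rangle\otimes_\kk A$. By the usual extension-of-scalars adjunction, $\Hom[\Q,A]{S\langle q\rangle\otimes_\kk A}{X}\is\Hom[\Q]{S\langle q\rangle}{X}$, where on the right $X$ is viewed as a $\Q$-module by restricting scalars along $\kk\to A$. Restriction of scalars is exact, and the left adjoint $-\otimes_\kk A$ sends projective $\Q$-modules $\Q(p,-)\otimes_\kk P$ to projective $\Q,A$-modules. To derive the adjunction, I would take a projective resolution $P_\bullet\to S\langle q\rangle$ in $\Mod{\Q}$ by representables tensored with projective $\kk$-modules. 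Because each $\Q(p,r)$ is $\kk$-projective, $S\langle q\rangle$ can be chosen with $\kk$-flat (even $\kk$-projective) values; this uses that $\kk$ is hereditary and noetherian and that $\mathfrak r$ is a direct summand. Then $P_\bullet\otimes_\kk A$ remains exact and resolves $S_q(A)$ by projectives. Applying $\Hom$ gives
\[\Ext[\Q,A]{i}{S_q(A)}{X}\is\Ext[\Q]{i}{S\langle q\rangle}{X}=\mathbb{H}^{i}_{[q]}(X)\]
for all $i\ge 0$.

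\textbf{Step 2: pass from degree $1$ to all degrees.} The exactness criterion involves all $i\geq1$, so I need that vanishing of $\mathbb{H}^1_{[q]}$ for every $q$ forces vanishing of all higher $\mathbb{H}^i_{[q]}$. I would use dimension shifting via the Serre functor. In \cite{HJ21} the stalk $S\langle q\rangle$ has a projective resolution whose syzygy behaviour is controlled: $\Omega$ of a stalk is built from stalks at other objects, and $\mathbb{H}^{i+1}_{[q]}$ is expressed through $\mathbb{H}^{i}_{[p]}$. More concretely, \cite[Theorem D]{HJ23} states that $X$ is exact if and only if $\mathbb{H}^{1}_{[q]}(X)=0$ for all $q$, or equivalently that $\mathbb{H}^{i}_{[q]}$ vanish for all $i\ge1$ once they vanish for $i=1$. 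So the cleanest route is to quote that equivalence directly and combine it with Step 1.

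\textbf{Main obstacle.} The delicate point is Step 1's base change. One must check that $S\langle q\rangle$ has a projective resolution that stays exact after $-\otimes_\kk A$; that is, that $\Tor^{\kk}_{>0}(S\langle q\rangle(p),A)=0$. This holds because $S\langle q\rangle(p)$ is a summand of $\Q(q,p)$, hence $\kk$-projective. The direction "exact $\Rightarrow$ Ext vanishes" is then immediate. The converse is exactly where the degree-one criterion of \cite{HJ23} does the real work, so I would present that step by citing it rather than reproving it.
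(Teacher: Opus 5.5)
Your proposal is essentially the paper's own treatment: the paper gives no separate argument and simply extracts the lemma from the proof of \cite[Theorem D]{HJ23}. Your Step 1 identification $\Ext[\Q,A]{i}{S_q(A)}{X}\is\mathbb{H}^{i}_{[q]}(X)$, followed by that same citation, is exactly this reduction; the identification in fact follows at once from the adjunction $(S_q,K_q)$ together with $\mathbb{H}^{i}_{[q]}=R^{i}K_q$, so the flatness discussion you flag as the main obstacle is unnecessary. The only caveat is that the substantive direction (degree-one vanishing forces exactness) is imported rather than proved, and the Serre functor is not what would prove it. What works is a finite socle filtration of each syzygy $\Omega^{i}S\langle q\rangle$, available because $\mathfrak{r}$ is nilpotent and $\kk$ is hereditary, whose factors are sums of stalks $S\langle p\rangle\otimes_{\kk}P$ with $P$ a projective $\kk$-module; this is what lets vanishing of $\mathbb{H}^{1}$ propagate to all $\mathbb{H}^{i}$.
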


\begin{lem}\label{lem:exact iso}
Let $Y$ be exact in $\Mod{\Q,A}$. Then the following isomorphisms hold for any $X\in \Mod{\Q,A}$ and  $q\in \Q$.
\begin{prt}
\item $\Ext[\Q,A]{1}{S_q(X)}{Y}\is\Ext[A]{1}{X}{K_{q}(Y)}$.
\item $\Ext[\Q,A]{1}{Y}{S_{q}(X)}\is\Ext[A]{1}{C_{q}(Y)}{X}.$
\end{prt}
\end{lem}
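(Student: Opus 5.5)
Both isomorphisms will come from the adjoint triple $(C_q,S_q,K_q)$ of \ref{adjoint triple2}, upgraded from $\mathrm{Hom}$ to $\mathrm{Ext}^1$ by a derived-functor argument. The exactness of $Y$ is what makes this upgrade work. I treat part (a); part (b) is dual, using $C_q\dashv S_q$ and injective-type data.

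For (a), note first that $S_q$ is exact. Indeed, $S_q(M)=S\langle q\rangle\otimes_\kk M$, and $S\langle q\rangle$ takes values in $\kk$-projective modules: $\Q(q,p)$ is projective and $\mathfrak r$ is a $\kk$-direct summand by the strong retraction property. The right adjoint $K_q=\Hom[\Q]{S\langle q\rangle}{-}$ of an exact functor therefore preserves injectives.

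Take an injective coresolution $0\to Y\to I^0\to I^1\to\cdots$ in $\Mod{\Q,A}$. Since $\mathscr E$ contains the injectives and is thick (it is the trivial class of a Hovey triple, \ref{exact object}), all the cosyzygies $Z^i$ are exact. The key claim is that $K_q$ carries this resolution to an exact complex in low degrees, i.e. $\mathbb H^1_{[q]}(Y)=\Ext[\Q]{1}{S\langle q\rangle}{Y}=0$. More precisely, I want $0\to K_q(Y)\to K_q(I^0)\to K_q(Z^1)\to 0$ to be exact. Granting that, apply $\Hom[A]{X}{-}$ and the adjunction $\Hom[\Q,A]{S_q X}{-}\cong\Hom[A]{X}{K_q(-)}$ to the two short exact sequences. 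Comparing cokernels gives
$$\Ext[\Q,A]{1}{S_qX}{Y}\cong\Coker\bigl(\Hom[A]{X}{K_qI^0}\to\Hom[A]{X}{K_qZ^1}\bigr)\cong\Ext[A]{1}{X}{K_qY},$$
where the last step uses that $K_qI^0$ is injective.

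The main obstacle is this vanishing, i.e. showing $\mathbb H^{i}_{[q]}(Y)=0$ for $i\ge1$ when $Y$ is exact (equivalently $R^iK_q(Y)=0$). I would first try Lemma \ref{lem:exact object1}: it gives $\Ext[\Q,A]{1}{S_q(A)}{Z}=0$ for every exact $Z$, and in particular for $Z=Z^0=Y$. By adjunction this is the cokernel of $\Hom[A]{A}{K_qI^0}\to\Hom[A]{A}{K_qZ^1}$, i.e. of $K_qI^0\to K_qZ^1$. So $R^1K_q(Y)=0$, and dimension shifting along the exact $Z^i$ gives all higher vanishing. This is exactly the needed surjectivity. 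It rests on the identification $\Ext[\Q,A]{1}{S_q(A)}{Y}\cong R^1K_q(Y)$, which holds because $S_q(A)$ corepresents $K_q$ and $\Hom[A]{A}{-}$ is the identity.

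For (b), I dualize the argument. Take a projective resolution $\cdots\to P_1\to P_0\to Y$; its syzygies are again exact. The left adjoint $C_q$ of the exact functor $S_q$ preserves projectives. The required vanishing $L_1C_q(Y)=\mathbb H^{[q]}_1(Y)=0$ for exact $Y$ would follow from the dual form of Lemma \ref{lem:exact object1}, namely exactness tested by $\Ext^1$ into $S_q$ of injective cogenerators. This can be obtained from the proof of \cite[Theorem D]{HJ23}, or by Serre duality $\mathbb S$ relating $S\{q\}$ to $S\langle \mathbb S^{-1}q\rangle$-type stalks. I expect this Tor-vanishing for (b) to be the more delicate point.
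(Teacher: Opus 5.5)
Your part (a) is correct and is essentially the paper's argument. The paper invokes \cite[Lemma 2.1]{ECO}, namely that an adjunction with exact left adjoint lifts to $\Ext^1$ once the right adjoint preserves the relevant short exact sequences. You instead compare cokernels along an injective coresolution. In both versions the only real input is $R^1K_q(Y)=\mathbb{H}^1_{[q]}(Y)=0$. The paper cites \cite[Theorem 7.1]{HJ21} for this vanishing. You derive it from Lemma \ref{lem:exact object1} via $K_q\cong\Hom[\Q,A]{S_q(A)}{-}$, hence $R^1K_q\cong\Ext[\Q,A]{1}{S_q(A)}{-}$. That derivation is valid and slightly more self-contained.

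Part (b) is not finished. Your reduction to $L_1C_q(Y)=\mathbb{H}^{[q]}_1(Y)=0$ is right, but you never prove this vanishing, and the routes you suggest do not supply it.
\begin{itemize}
\item Your corepresentability trick from (a) has no formal dual inside the available tools. The analogous computation only gives $\Hom[A]{L_1C_q(Y)}{E}\cong\Ext[\Q,A]{1}{Y}{S_q(E)}$ for injective $E$; this is the content of Lemma \ref{lem:exact object2}.
\item So you would still need the separate fact that $\Ext[\Q,A]{1}{Y}{S_q(E)}=0$ for exact $Y$. That does not follow from Lemma \ref{lem:exact object1}, which only tests against $\Ext[\Q,A]{1}{S_q(A)}{-}$.
\item This fact is also not available before the present lemma. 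The paper's criterion in Lemma \ref{lem:exact object2} appears later and itself rests on \cite[Theorem 7.1]{HJ21}.
\item The remarks about \cite{HJ23} and Serre duality are pointers, not arguments.
\end{itemize}
The missing input is exactly \cite[Theorem 7.1]{HJ21}: for exact $Y$, all $\mathbb{H}^{[q]}_i(Y)$ and all $\mathbb{H}_{[q]}^i(Y)$ vanish. This is what the paper cites. With that citation, your projective-resolution argument for (b) goes through as written.
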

\begin{proof}
$(\mathrm{a}).$ Since $(S_{q},K_{q})$ is an adjoint pair and $S_{q}$ is exact, by \cite[Lemma 2.1]{ECO}, it suffices to prove that $K_{q}$ preserves exact sequences in $\Ext[\Q,A]{1}{S_{q}(X)}{Y}$.
Consider an exact sequence $$0\to Y\to M\to S_{q}(X)\to 0 $$ in $\Ext[\Q,A]{1}{S_{q}(X)}{Y}$.
We obtain the following exact sequence in $\Mod{A}$
$$0\to K_{q}(Y)\to K_{q}(M)\to K_{q}S_{q}(X)\to \Ext[\Q]{1}{S\langle q\rangle}{Y}.$$
Since $Y$ is  exact, we have $\Ext[\Q]{1}{S\langle q\rangle}{Y}=\mathbb{H}_{[q]}^{1}(Y)=0$ by \cite[Theorem 7.1]{HJ21}.
Hence we get the exact sequence $$0\to K_{q}(Y)\to K_{q}(M)\to K_{q}S_{q}(X)\to 0,$$
as desired.

$(\mathrm{b}).$ Since $(C_{q},S_{q})$ is an adjoint pair and $S_{q}$ is exact, by \cite[Lemma 2.1]{ECO}, it suffices to show that $C_{q}$ preserves exact sequences in $\Ext[\Q,A]{1}{Y}{S_{q}(X)}.$
Take an exact sequence
$$0\to S_{q}(X)\to N\to Y\to 0 $$ in $\Ext[\Q,A]{1}{Y}{S_{q}(X)}$.
Then we obtain the following exact sequence in $\Mod{A}$
$$L_{1}C_{q}(Y)\to C_{q}S_{q}(X)\to C_{q}(N)\to C_{q}(Y)\to 0.$$
Note that $C_{q}=S\{q\}\otimes_{\Q}-$ and $Y$ is exact.
Then $L_{1}C_{q}(Y)=\mathbb{H}_{1}^{[q]}(Y)=0$ by \cite[Theorem 7.1]{HJ21}.
Thus we get the following exact sequence
$$0\to C_{q}S_{q}(X)\to C_{q}(N)\to C_{q}(Y)\to 0,$$
as desired.
\end{proof}

\section{Induced cotorsion pairs in $\Mod{\Q,A}$}
In this section, we construct two cotorsion pairs in $\Mod{\Q,A}$ from a given cotorsion pair in $\Mod{A}.$
As a result, we establish a projective model structure on $\Mod{\Q,A}$ under mild conditions.
The next is the key definition in this paper.

\begin{dfn}\label{df:cotorsion pair}
Suppose that $(\mathcal{X},\mathcal{Y})$ is a cotorsion pair in $\Mod{A}$.
We introduce the following classes in $\Mod{\Q,A}$:
$$\quad \ \ \widetilde{\mathcal{Y}}= \{Y\in\Mod{\Q,A}\ |\ \Ext[\Q,A]{1}{S_{q}(C)}{Y}=0 ~\text{and} \ Y(q)\in \mathcal{Y} {\rm \ for~ any \ }C\in\mathcal{X}, ~q\in \Q \  \},$$
$$\quad \ \ \widetilde{\mathcal{X}}=\{X\in\Mod{\Q,A}\ |\ \Ext[\Q,A]{1}{X}{S_{q}(C)}=0\ \text{and}\ X(q)\in \mathcal{X} \text{ for any } C\in {\mathcal{Y}},~q\in \Q \  \}.$$
 We then set $\mathsf{dg} \mathcal{X}={^{\bot}\widetilde{\mathcal{Y}}}$ and $\mathsf{dg} \mathcal{Y}=\widetilde{\mathcal{X}}^{\bot}$.
\end{dfn}

\begin{rmk}
Consider the situation where $\Q$ is the mesh category of the repetitive quiver of $\overrightarrow{\mathbb{A}_{2}}=\bullet\to\bullet.$
Then $\Mod{\Q,A}$ is equivalent to the category $\mathrm{Ch}(A)$ of chain complexes of left $A$-modules (see \cite[Remark 8.5 and Example 8.13]{HJ21}), and $\widetilde{\mathcal{X}}$ (resp. $\widetilde{\mathcal{Y}}$) coincides with the class of $\mathcal{X}$ (resp. $\mathcal{Y}$) complexes introduced by Gillespie in \cite[Definition 3.3]{Gi04}; see Proposition \ref{prop:KC} below.
\end{rmk}

\begin{lem}\label{lem:contain contractible objects}
Suppose that $(\mathcal{X},\mathcal{Y})$ is a cotorsion pair in $\Mod{A}$. Then the following statements hold.
\begin{prt}
\item $\widetilde{\mathcal{Y}}$ contains $G_{q}(Y)$ on objects $Y\in \mathcal{Y}$ and  $q\in\Q$.
\item $\mathsf{dg} \mathcal{X}$ contains $F_{q}(X)$ and $S_{q}(X)$ on objects $X\in \mathcal{X}$ and  $q\in\Q$.
\item $\widetilde{\mathcal{X}}$ contains $F_{q}(X)$ on objects $X\in \mathcal{X}$ and  $q\in\Q$.
\item $\mathsf {dg} \mathcal{Y}$ contains $G_{q}(Y)$ and $S_{q}(Y)$ on objects $Y\in \mathcal{Y}$ and  $q\in\Q$.
\end{prt}
\end{lem}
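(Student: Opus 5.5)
Each part comes down to computing the relevant $\Ext^1$ groups via the adjunction isomorphisms of Lemma \ref{n-iso} and the cotorsion pair $(\X,\Y)$ in $\Mod{A}$. It is convenient to prove (a) and (c) first, and then deduce (b) and (d) from them.

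For (a), fix $Y\in\Y$ and $q\in\Q$. We must check two things. First, $G_q(Y)(p)=\Hom[\kk]{\Q(p,q)}{Y}\in\Y$ for every $p$. Since $\Q(p,q)$ is a finitely generated projective $\kk$-module, it is a direct summand of some $\kk^n$. Hence $\Hom[\kk]{\Q(p,q)}{Y}$ is a direct summand of $Y^n$, and $\Y=\X^\bot$ is closed under finite direct sums and summands. Second, $\Ext[\Q,A]{1}{S_p(C)}{G_q(Y)}=0$ for $C\in\X$. By Lemma \ref{n-iso} this group is $\Ext[A]{1}{S_p(C)(q)}{Y}$. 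Now $S_p(C)(q)=S\langle p\rangle(q)\otimes_\kk C$, and $S\langle p\rangle(q)$ is $\kk\otimes$ (or $0$) at $q=p$ and $0$ otherwise. Indeed, $\Q(p,q)=\mathfrak r(p,q)$ for $p\ne q$ and $\Q(p,p)/\mathfrak r_p\cong\kk$ by the strong retraction property. So $S_p(C)(q)$ is $C$ or $0$, and the group vanishes because $C\in\X$, $Y\in\Y$.

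Part (c) is dual. We have $F_q(X)(p)=\Q(q,p)\otimes_\kk X$, which is a summand of $X^n$ and hence lies in $\X$. Lemma \ref{n-iso} gives $\Ext[\Q,A]{1}{F_q(X)}{S_p(C)}\cong\Ext[A]{1}{X}{S_p(C)(q)}$, which is $0$ for $C\in\Y$ by the same evaluation of $S_p(C)$ at $q$.

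For (b), take $X\in\X$. For $F_q(X)$ and any $Y'\in\WY$, Lemma \ref{n-iso} gives $\Ext[\Q,A]{1}{F_q(X)}{Y'}\cong\Ext[A]{1}{X}{Y'(q)}=0$, since $Y'(q)\in\Y$. For $S_q(X)$, the vanishing is literally part of the definition of $\WY$ (with $C=X\in\X$), so $S_q(X)\in{}^\bot\WY=\DX$. Dually, for (d) take $Y\in\Y$. For $X'\in\WX$ we have $\Ext[\Q,A]{1}{X'}{G_q(Y)}\cong\Ext[A]{1}{X'(q)}{Y}=0$, and $\Ext[\Q,A]{1}{X'}{S_q(Y)}=0$ by the definition of $\WX$.

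The only step that is not purely formal is identifying the values $S_p(C)(q)$ (and the summand argument for $F_q$, $G_q$). It relies on $\Q(p,q)$ being finitely generated projective and on the pseudo-radical satisfying $\mathfrak r(p,q)=\Q(p,q)$ for $p\neq q$. If the latter is not directly available, I would instead use that $S\langle p\rangle(q)$ is a finitely generated projective $\kk$-module, as in \cite{HJ21}. Then $S_p(C)(q)$ is a summand of $C^n$, so it lies in $\X$, and closure of $\X$ under summands again gives the vanishing.
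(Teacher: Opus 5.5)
Your proof is correct. For the membership conditions ($G_q(Y)(p)\in\mathcal{Y}$, $F_q(X)(p)\in\mathcal{X}$) and for parts (b) and (d), it matches the paper. The one genuine difference is how you get the Ext-vanishing in (a) and, dually, in (c).

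\textbf{The paper's route.} It first notes that $G_q(Y)$ is exact by \cite[Lemma 7.14]{HJ21}. It then applies Lemma \ref{lem:exact iso}(a) to get $\Ext[\Q,A]{1}{S_p(C)}{G_q(Y)}\cong\Ext[A]{1}{C}{K_pG_q(Y)}$, and computes $K_pG_q(Y)$ (it is $Y$ or $0$) by \cite[Lemma 7.28]{HJ21}. Its dual for (c) would similarly go through $C_pF_q(X)$ and Lemma \ref{lem:exact iso}(b).

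\textbf{Your route.} You use the $(E_q,G_q)$ and $(F_q,E_q)$ isomorphisms of Lemma \ref{n-iso}, which reduce everything to $\Ext[A]{1}{S_p(C)(q)}{Y}$ or $\Ext[A]{1}{X}{S_p(C)(q)}$. You then read off $S_p(C)(q)\in\{C,0\}$ from the definition of the pseudo-radical, which indeed has $\mathfrak{r}(p,q)=\Q(p,q)$ for $p\neq q$; the paper itself uses $\mathrm{supp}\,S_q(C)=\{q\}$ later.

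\textbf{Comparison.} Your argument is more elementary. It needs only exactness of $E_q$, $F_q$, $G_q$ and the fact that $S\langle p\rangle$ is concentrated at $p$. It avoids exactness of $G_q(Y)$ and $F_q(X)$, and the vanishing of $\mathbb{H}^{1}_{[p]}$ and $\mathbb{H}_{1}^{[p]}$ on exact objects, both of which Lemma \ref{lem:exact iso} depends on. The paper's version stays inside the $(C_p,S_p,K_p)$ framework that it reuses in Proposition \ref{prop:KC}. The two computations are adjoint to each other, since $K_pG_q(Y)\cong\Hom[\kk]{S\langle p\rangle(q)}{Y}$.

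Two cosmetic points. The phrase ``$\kk\otimes$ (or $0$) at $q=p$'' is garbled: $S\langle p\rangle(p)\cong\kk$. Also, despite your opening sentence, your proofs of (b) and (d) do not actually use (a) and (c).
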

\begin{prf*}
 $(\mathrm{a}).$ We mention that $G_{q}(Y)=\Hom[\kk]{\Q(-,q)}{Y}$ for any $Y\in \Y$ and $q\in \Q$.
 Thus $G_{q}(Y)(p)=\Hom[\kk]{\Q(p,q)}{Y}$ for any $p\in \Q$.
 Since $\Q(q,p)$ is finitely generated and projective as a $\kk$-module by Setup \ref{setup}, it follows that $G_{q}(Y)(p)$ is in $\mathcal{Y}$. Now it is enough to show that $\Ext[\Q,A]{1}{S_{p}(X)}{G_{q}(Y)}=0$ for any $X\in \X$ and $p\in\Q.$
For any $X\in \mathcal{X}$, we have
\begin{align*}
\Ext[\Q,A]{1}{S_{p}(X)}{G_{q}(Y)}&\cong \Ext[A]{1}{X}{K_{p}G_{q}(Y)}\\
&\cong 0,
\end{align*}
where the first isomorphism follows from Lemma \ref{lem:exact iso} as $G_{q}(Y)$ is exact by \cite[Lemma 7.14]{HJ21}, the second isomorphism holds by \cite[Lemma 7.28]{HJ21}.

$(\mathrm{b}).$ By the definition of $\mathsf{dg} \mathcal{X}$, we know that $S_{q}(X)\in \mathsf{dg} \mathcal{X}$ for any $X\in \X$ and $q\in\Q$. On the other hand,
given $Y\in \widetilde{\mathcal{Y}}$, we need to show that $\Ext[\Q,A]{1}{F_{q}(X)}{Y}=0$ for all $q\in\Q$ and $X\in \X$.
Indeed, by Lemma \ref{n-iso}, we have  $$\Ext[\Q,A]{1}{F_{q}(X)}{Y}\cong \Ext[A]{1}{X}{Y(q)}.$$ Since $Y(q)\in\Y$ by Definition \ref{df:cotorsion pair}, one has $\Ext[A]{1}{X}{Y(q)}=0$ as $(\X,\Y)$ is a cotorsion pair.
Consequently, $\Ext[\Q,A]{1}{F_{q}(X)}{Y}=0$, as desired.

The proofs of $(\mathrm{c})$ and $(\mathrm{d})$ are respectively dual to those of $(\mathrm{a})$ and $(\mathrm{b})$.
\end{prf*}

\begin{cor}\label{cor:dw}
Let $(\X,\Y)$ be a cotorsion pair in $\Mod{A}.$
Then the following statements hold.
\begin{prt}
\item If $X\in \DX$, then $X(q)\in \X$ for each $q\in \Q.$
\item If $Y\in \DY$, then $Y(q)\in \Y$ for each $q\in \Q.$
\end{prt}
\end{cor}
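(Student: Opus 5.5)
We prove (a) via Lemma \ref{n-iso} together with Lemma \ref{lem:contain contractible objects}(a); part (b) is the dual argument with Lemma \ref{lem:contain contractible objects}(c).

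For (a), let $X\in\DX={^{\bot}\widetilde{\mathcal{Y}}}$ and fix $q\in\Q$. Since $(\X,\Y)$ is a cotorsion pair, $\X={^{\bot}\Y}$. So it suffices to show $\Ext[A]{1}{X(q)}{Y}=0$ for every $Y\in\Y$. By the second isomorphism of Lemma \ref{n-iso} with $n=1$,
$$\Ext[A]{1}{X(q)}{Y}\is\Ext[\Q,A]{1}{X}{G_{q}(Y)}.$$
By Lemma \ref{lem:contain contractible objects}(a), $G_{q}(Y)\in\widetilde{\mathcal{Y}}$. Because $X\in{^{\bot}\widetilde{\mathcal{Y}}}$, the right-hand side vanishes. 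Hence $X(q)\in{^{\bot}\Y}=\X$.

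For (b), let $Y\in\DY=\widetilde{\mathcal{X}}^{\bot}$, fix $q\in\Q$, and let $M\in\X$. The first isomorphism of Lemma \ref{n-iso} gives
$$\Ext[A]{1}{M}{Y(q)}\is\Ext[\Q,A]{1}{F_{q}(M)}{Y}.$$
By Lemma \ref{lem:contain contractible objects}(c), $F_{q}(M)\in\widetilde{\mathcal{X}}$, so the right-hand side is zero. Thus $Y(q)\in\X^{\bot}=\Y$.

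There is no real obstacle here. The only points to watch are matching the correct adjoint, $G_q$ for (a) and $F_q$ for (b), with the correct variance in Lemma \ref{n-iso}, and using the already established membership facts from Lemma \ref{lem:contain contractible objects}.
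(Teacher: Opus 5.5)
Your proof is correct and follows the paper's argument. For (a), the paper likewise combines $G_q(M)\in\widetilde{\mathcal{Y}}$ (Lemma \ref{lem:contain contractible objects}(a)) with the isomorphism $\Ext[\Q,A]{1}{X}{G_q(M)}\is\Ext[A]{1}{X(q)}{M}$ from Lemma \ref{n-iso}. Your (b), using $F_q$ and Lemma \ref{lem:contain contractible objects}(c), is exactly the dual argument the paper leaves to the reader.
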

\begin{proof}
We only prove $(\mathrm{a})$ since the proof of $(\mathrm{b})$ is dual to that of $(\mathrm{a})$.

For any $M\in \Y$, we have $G_{q}(M)\in \WY$ for each $q\in \Q$ by Lemma \ref{lem:contain contractible objects}.
If $X\in \DX$, then $$0=\Ext[\Q,A]{1}{X}{G_{q}(M)}\cong \Ext[A]{1}{X(q)}{M}.$$
It implies that $X(q)\in \X$ for all $q\in\Q$ as $(\X,\Y)$ is a cotorsion pair.
\end{proof}

\begin{prp}\label{prop:cotorsion pair}
Let $(\mathcal{X},\mathcal{Y})$ be a cotorsion pair in $\Mod{A}$.
Then $(\mathsf{dg} \mathcal{X},\widetilde{\mathcal{Y}})$ and $(\widetilde{\mathcal{X}},\mathsf{dg} \mathcal{Y})$ are cotorsion pairs in $\Mod{\Q,A}$.
\end{prp}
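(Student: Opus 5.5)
By definition $\mathsf{dg}\X={}^{\bot}\WY$, so $\mathsf{dg}\X={}^{\bot}\WY$ holds trivially and the only thing to prove for the first pair is $\mathsf{dg}\X^{\bot}=\WY$. The inclusion $\WY\subseteq(\mathsf{dg}\X)^{\bot}$ is formal, since $\WY\subseteq({}^{\bot}\WY)^{\bot}$. So the real content is the reverse inclusion $(\mathsf{dg}\X)^{\bot}\subseteq\WY$.

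Let $Y\in(\mathsf{dg}\X)^{\bot}$. We must check the two defining conditions of $\WY$ from Definition \ref{df:cotorsion pair}.
\begin{prt}
\item By Lemma \ref{lem:contain contractible objects}(b), $S_q(C)\in\mathsf{dg}\X$ for every $C\in\X$ and $q\in\Q$. Hence $\Ext[\Q,A]{1}{S_q(C)}{Y}=0$ directly.
\item By the same lemma, $F_q(C)\in\mathsf{dg}\X$ for every $C\in\X$. Lemma \ref{n-iso} then gives
$$0=\Ext[\Q,A]{1}{F_q(C)}{Y}\cong\Ext[A]{1}{C}{Y(q)}$$
for all $C\in\X$. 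Thus $Y(q)\in\X^{\bot}=\Y$, because $(\X,\Y)$ is a cotorsion pair.
\end{prt}
Together these show $Y\in\WY$, so $(\mathsf{dg}\X,\WY)$ is a cotorsion pair.

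The second pair is dual. Here $\mathsf{dg}\Y=\WX^{\bot}$ by definition and $\WX\subseteq{}^{\bot}(\WX^{\bot})$ trivially, so it remains to show ${}^{\bot}\mathsf{dg}\Y\subseteq\WX$. Let $X\in{}^{\bot}\mathsf{dg}\Y$.
\begin{prt}
\item By Lemma \ref{lem:contain contractible objects}(d), $S_q(C)\in\mathsf{dg}\Y$ for every $C\in\Y$. This gives $\Ext[\Q,A]{1}{X}{S_q(C)}=0$.
\item The same lemma gives $G_q(C)\in\mathsf{dg}\Y$. Lemma \ref{n-iso} then yields
$$0=\Ext[\Q,A]{1}{X}{G_q(C)}\cong\Ext[A]{1}{X(q)}{C}$$
for all $C\in\Y$. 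Hence $X(q)\in{}^{\bot}\Y=\X$.
\end{prt}
Therefore $X\in\WX$, and $(\WX,\mathsf{dg}\Y)$ is a cotorsion pair.

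There is no genuine obstacle at this stage: the substantive input, namely that the stalk objects $S_q(-)$ and the (co)induced objects $F_q(-)$, $G_q(-)$ lie in the appropriate classes, is already contained in Lemma \ref{lem:contain contractible objects}. That lemma in turn rests on the exactness of $G_q(Y)$ and Lemma \ref{lem:exact iso}. The only care needed is to invoke the parts (b) and (d) whose proofs are stated as dual, and to keep the direction of each $\operatorname{Ext}$ straight in the adjunction isomorphisms of Lemma \ref{n-iso}.
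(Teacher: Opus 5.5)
Your proof is correct and is essentially the paper's argument: you test $Y\in(\mathsf{dg}\X)^{\bot}$ against $F_q(C)$ and $S_q(C)$ for $C\in\X$ (and dually $X\in{}^{\bot}\mathsf{dg}\Y$ against $G_q(C)$ and $S_q(C)$ for $C\in\Y$), using Lemma \ref{lem:contain contractible objects}(b),(d) and Lemma \ref{n-iso}. One small correction to your closing remark: parts (b) and (d) of that lemma follow directly from the definitions and Lemma \ref{n-iso}, while the exactness of $G_q(Y)$ and Lemma \ref{lem:exact iso} are needed only for parts (a) and (c), which this argument does not use.
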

\begin{proof}
By Definition \ref{df:cotorsion pair}, we already have $\mathsf{dg}\mathcal{X}={^{\bot}\widetilde{\mathcal{Y}}}$.
So we wish to show ${\mathsf{dg}\mathcal{X}}^{\bot}=\widetilde{\mathcal{Y}}.$
Note that $\widetilde{\mathcal{Y}}\subseteq {\mathsf{dg}\mathcal{X}}^{\bot}$ by the definition of $\mathsf{dg}\mathcal{X}.$
Thus we need to show that ${\mathsf{dg}\mathcal{X}}^{\bot}\subseteq\widetilde{\mathcal{Y}}.$
So take $Y\in \Mod{\Q,A}$ satisfying $\Ext[\Q,A]{1}{X}{Y}=0$ for all $X\in \mathsf{dg}\mathcal{X}$.
By Lemma \ref{lem:contain contractible objects}(b), we can take $X=F_{q}(M)$ for any $M\in \X$ and $q\in \Q.$
Then $$0=\Ext[\Q,A]{1}{F_{q}(M)}{Y} \cong \Ext[A]{1}{M}{Y(q)}.$$
It implies that $Y(q)\in \mathcal{Y}$ for every $q\in \Q.$
Next, by Lemma \ref{lem:contain contractible objects}(b) again, we may take $X=S_{q}(M)$ for any $M\in \mathcal{X}$ and $q\in\Q.$
Then $\Ext[\Q,A]{1}{S_{q}(M)}{Y}=0$, proving that $Y\in \widetilde{\mathcal{Y}}.$
Hence we complete the proof that $(\mathsf{dg}\mathcal{X},\widetilde{\mathcal{Y}})$ is also a cotorsion pair in $\Mod{\Q,A}$.

By a dual argument, we can show that $(\widetilde{\mathcal{X}},\mathsf{dg} \mathcal{Y})$ is a cotorsion pair in $\Mod{\Q,A}$.
\end{proof}

\begin{lem}\label{lem:exact object2}
Let $X\in \Mod{\Q,A}$ and $I$ be an injective module in $\Mod{A}$. Then for $i\geq 1$ we have the isomorphism $$\Ext[\Q,A]{i}{X}{S_{q}(I)}\cong \Hom[\kk]{\mathbb{H}_{i}^{[q]}(X)}{I}.$$
In particular, $X$ is exact if and only if $\Ext[\Q,A]{i}{X}{S_{q}(E)}=0$ for some injective cogenerator $E$ in $\Mod{A}.$
\end{lem}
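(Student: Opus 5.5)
Since $S_q$ has the left adjoint $C_q = S\{q\}\otimes_{\Q}-$ (the adjoint triple in \ref{adjoint triple2}), the natural route is to derive that adjunction and use that $\Hom[A]{-}{I}$ is exact. The first step is to record that $S_q$ is exact, since it is $S\langle q\rangle\otimes_\kk-$ and $S\langle q\rangle(p)$ is a projective $\kk$-module (a retract of $\Q(q,p)$ under the strong retraction property). Because $S_q$ is an exact right adjoint of $C_q$, it preserves injectives. Hence if $0\to I\to I^0\to\cdots$ is an injective resolution of $I$ in $\Mod{A}$, then applying $S_q$ gives an injective resolution of $S_q(I)$ in $\Mod{\Q,A}$. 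Taking $I$ itself injective, $S_q(I)$ is already injective, so this approach does not directly work. I therefore switch to a projective resolution of $X$.

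Let $P_\bullet\to X$ be a projective resolution in $\Mod{\Q,A}$. Then
\[
\Ext[\Q,A]{i}{X}{S_q(I)}=H^i\bigl(\Hom[\Q,A]{P_\bullet}{S_q(I)}\bigr)\cong H^i\bigl(\Hom[A]{C_q(P_\bullet)}{I}\bigr),
\]
where the isomorphism uses the adjunction $(C_q,S_q)$. Since $I$ is injective, $\Hom[A]{-}{I}$ is exact and commutes with cohomology, so the right-hand side is $\Hom[A]{H_i(C_q(P_\bullet))}{I}=\Hom[A]{L_iC_q(X)}{I}$. Projectives in $\Mod{\Q,A}$ are summands of sums of $F_p(P)=\Q(p,-)\otimes_\kk P$, and these are flat for $S\{q\}\otimes_\Q-$ because $\Q(p,-)$ is representable. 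So $L_iC_q(X)=\Tor[\Q]{i}{S\{q\}}{X}=\mathbb{H}^{[q]}_i(X)$, which gives the displayed isomorphism. The $\Hom$ in the statement should be read as $A$-linear homs (written $\Hom[\kk]{}{}$ in the paper); I would note this.

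The main point needing care is the identification $L_iC_q=\mathbb{H}^{[q]}_i$. The issue is that $\Tor$ over $\Q$ with $A$-coefficients is computed by a $\Q$-projective (i.e.\ $\kk$-level) resolution, while I used a $(\Q,A)$-projective resolution. To reconcile them, I would check that each $F_p(P)$ is acyclic for $S\{q\}\otimes_\Q-$: this holds since $S\{q\}\otimes_\Q(\Q(p,-)\otimes_\kk P)\cong S\{q\}(p)\otimes_\kk P$ and higher Tor vanishes by Yoneda. Equivalently, I could cite \cite[Theorem 7.1]{HJ21}, which describes $\mathbb{H}^{[q]}_i$ as derived functors of $C_q$.

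For the ``in particular'' statement, I would read it as $i\ge 1$, or all $i\geq1$, together with all $q$. \cite[Theorem 7.1]{HJ21} says $X$ is exact iff $\mathbb{H}^{[q]}_i(X)=0$ for all $i\ge1$ and all $q$. For an injective cogenerator $E$, an $A$-module $M$ vanishes iff $\Hom[A]{M}{E}=0$. Combining this with the isomorphism above, $X$ is exact iff $\Ext[\Q,A]{i}{X}{S_q(E)}=0$ for all $i\ge1$ and $q\in\Q$. If the statement is meant with $i=1$ only, I would additionally invoke the Hovey-triple/periodicity fact from \cite{HJ21} that vanishing of $\mathbb{H}^{[q]}_1$ for all $q$ already forces exactness, mirroring Lemma \ref{lem:exact object1}.
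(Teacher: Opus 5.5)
Your proof of the isomorphism is correct, but it resolves the other variable from the one the paper resolves. The paper takes a projective resolution $P_\bullet\to S\{q\}$ in $\rMod{\Q}$ and uses $S_q(I)\cong\Hom[\kk]{S\{q\}}{I}$ to see that $\Hom[\kk]{P_\bullet}{I}$ is an injective resolution of $S_q(I)$ in $\Mod{\Q,A}$. The tensor--hom adjunction plus exactness of $\Hom[A]{-}{I}$ then gives $\Hom[A]{\mathrm{H}_i(P_\bullet\otimes_\Q X)}{I}$ directly, i.e.\ $\Tor[\Q]{i}{S\{q\}}{X}$ computed by resolving $S\{q\}$. So the paper needs no balancing of $\operatorname{Tor}$; the price is checking that each $\Hom[\kk]{P_j}{I}$ is injective and that the dualized resolution stays exact.

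You instead resolve $X$ and use the adjunction $(C_q,S_q)$. That step is purely formal: $\operatorname{Ext}^i(X,R(I))\cong\operatorname{Hom}(L_iL(X),I)$ for any adjunction $L\dashv R$ and injective $I$, with no exactness of $R$ needed. Your price is the identification $L_iC_q\cong\mathbb{H}^{[q]}_i$. Since the paper itself takes $L_1C_q=\mathbb{H}^{[q]}_1$ for granted in Lemma \ref{lem:exact iso}, your route is arguably the more economical one in context. Your reading of the target as $\Hom[A]{\mathbb{H}^{[q]}_i(X)}{I}$ is also right; the paper's own chain of isomorphisms produces exactly that.

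Three repairs are needed.

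First, delete your opening paragraph: the claim that $S_q$ preserves injectives, so that $S_q(I)$ is injective, is false. A right adjoint preserves injectives when its \emph{left} adjoint preserves monomorphisms, and $C_q$ is only right exact. Indeed, your own isomorphism with $I=E$ an injective cogenerator shows that injectivity of $S_q(E)$ would force $\mathbb{H}^{[q]}_i(X)=0$ for every $X$ and every $i\geq 1$. The correct form of that idea is an injective resolution of $S_q(I)$ obtained by dualizing a projective resolution of $S\{q\}$, and that is precisely the paper's proof.

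Second, in the acyclicity check, Yoneda only gives $S\{q\}\otimes_\Q F_p(P)\cong S\{q\}(p)\otimes_\kk P$. The vanishing of $\Tor[\Q]{i}{S\{q\}}{F_p(P)}\cong\Tor[\kk]{i}{S\{q\}(p)}{P}$ for $i\geq 1$ uses that $S\{q\}(p)$ is $\kk$-projective (it is $\kk$ for $p=q$ and $0$ otherwise). You cannot instead rely on $P$, which need not be $\kk$-flat when $A$ is not.

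Third, the ``in particular'' part. Your all-$i$ version follows from \cite[Theorem 7.1]{HJ21} and the cogenerator property, as you say. However, the paper applies the lemma with $i=1$ only (in Proposition \ref{prop:KC}). That version does not follow formally from the all-$i$ criterion, since dimension shifting replaces $X$ by a syzygy. It needs a single-degree criterion: $X$ is exact iff $\mathbb{H}^{[q]}_1(X)=0$ for all $q$, the homological counterpart of Lemma \ref{lem:exact object1}. The paper's proof does not supply this either, so state precisely where it comes from rather than gesturing at a Hovey-triple or periodicity argument.
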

\begin{proof}
Consider the projective resolution of $S\{q\}$ in $\rMod{\Q}$
$$P_{\bullet}=\cdots\to P_{2} \to P_{1}\to P_{0}\to 0.$$
By \cite[Lemma 3.8]{HJ21}, it is easy to check that each $\Hom[\kk]{P_{i}}{I}$ is injective in $\Mod{\Q,A}$, and then
$$\Hom[\kk]{P_{\bullet}}{I}=0\to \Hom[\kk]{P_{0}}{I}\to \Hom[\kk]{P_{1}}{I}\to \Hom[\kk]{P_{2}}{I}\to \cdots $$
is an injective resolution of $\Hom[\kk]{S\{q\}}{I}.$
It follows that we get the following isomorphisms:
\begin{align*}
\Ext[\Q,A]{i}{X}{S_{q}(I)}&\cong \Ext[\Q,A]{i}{X}{\Hom[\kk]{S\{q\}}{I}}\\
&=\mathrm{H}_{-i}\Hom[\Q,A]{X}{\Hom[\kk]{P_{\bullet}}{I}}\\
&\cong \mathrm{H}_{-i}\Hom[A]{P_{\bullet}\otimes_{\Q}X}{I}\\
&\cong \Hom[A]{\mathrm{H}_{i}(P_{\bullet}\otimes_{\Q}X)}{I}\\
&=\Hom[A]{\mathrm{Tor}_{i}^{\Q}(S\{ q\},X)}{I}\\
&=\Hom[\kk]{\mathbb{H}_{i}^{[q]}(X)}{I}.
\end{align*}
In this computation, the first isomorphism holds by the fact that $S_{q}\cong \Hom[\kk]{S\{q\}}{-}$; see the proof of \cite[Proposition 7.15]{HJ21}.
The second isomorphism is induced by \cite[Proposition 3.8]{HJ21}.
The third isomorphism holds as $I$ is injective in $\Mod{A}.$
\end{proof}

\begin{prp}\label{prop:KC}
Let $(\mathcal{X},\mathcal{Y})$ be a cotorsion pair on $\Mod{A}$, and $X$ be in $\Mod{\Q,A}$. Then the following statements hold.
\begin{prt}
\item Then $X\in \widetilde{\mathcal{X}}$ if and only if $X$ is exact, $C_{q}(X)\in \mathcal{X}$ and $X(q)\in \mathcal{X}$ for every $q\in\Q.$
\item Then $Y\in \widetilde{\mathcal{Y}}$ if and only if $Y$ is exact, $K_{q}(Y)\in \mathcal{Y}$ and $Y(q)\in \mathcal{Y}$ for every $q\in\Q.$
\end{prt}
\end{prp}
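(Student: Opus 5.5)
We prove (b); part (a) is the formal dual, with $C_q$, Lemma \ref{lem:exact object2} and Lemma \ref{lem:exact iso}(b) in place of $K_q$, Lemma \ref{lem:exact object1} and Lemma \ref{lem:exact iso}(a). The plan is to reduce both implications to Lemma \ref{lem:exact iso}(a), which converts $\Ext[\Q,A]{1}{S_q(C)}{Y}$ into $\Ext[A]{1}{C}{K_q(Y)}$ as soon as $Y$ is exact. Exactness is therefore the first thing to establish in the forward direction.

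\emph{Forward direction.} Let $Y\in\WY$. To get exactness we want $\Ext[\Q,A]{1}{S_q(A)}{Y}=0$ for all $q$ and then invoke Lemma \ref{lem:exact object1}. This is where the main obstacle lies: $A$ need not belong to $\X$. Here is how I would handle it.
\begin{itemize}
\item Every projective $A$-module lies in $\X={}^{\bot}\Y$, since $\Ext[A]{1}{P}{-}=0$ for projective $P$.
\item In particular $A\in\X$, so the definition of $\WY$ applied with $C=A$ gives $\Ext[\Q,A]{1}{S_q(A)}{Y}=0$ directly.
\end{itemize}
Hence $Y$ is exact. Lemma \ref{lem:exact iso}(a) then gives $\Ext[A]{1}{C}{K_q(Y)}\cong\Ext[\Q,A]{1}{S_q(C)}{Y}=0$ for every $C\in\X$. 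Since $(\X,\Y)$ is a cotorsion pair, $\X^{\bot}=\Y$, so $K_q(Y)\in\Y$. The condition $Y(q)\in\Y$ is part of the definition of $\WY$.

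\emph{Converse.} Suppose $Y$ is exact, $K_q(Y)\in\Y$ and $Y(q)\in\Y$ for all $q$. For $C\in\X$, Lemma \ref{lem:exact iso}(a) gives $\Ext[\Q,A]{1}{S_q(C)}{Y}\cong\Ext[A]{1}{C}{K_q(Y)}=0$. Together with $Y(q)\in\Y$, this shows $Y\in\WY$.

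\emph{Part (a).} For $X\in\widetilde{\X}$, exactness needs $\Ext[\Q,A]{1}{X}{S_q(E)}=0$ for an injective cogenerator $E$, by Lemma \ref{lem:exact object2}. Injective modules lie in $\X^{\bot}=\Y$, so $E\in\Y$ and the definition of $\widetilde{\X}$ supplies this vanishing. Lemma \ref{lem:exact object2} is stated with $\Ext^{i}$, so I would read exactness there as the $i=1$ condition for all $q$. If one wants every $i$, note that $\Ext[\Q,A]{i}{X}{S_q(E)}\cong\Hom[\kk]{\mathbb{H}^{[q]}_i(X)}{E}$, so vanishing of $\mathbb{H}_1^{[q]}$ for all $q$ characterizes exactness by \cite[Theorem 7.1]{HJ21}. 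After that, Lemma \ref{lem:exact iso}(b) gives $\Ext[A]{1}{C_q(X)}{C}=0$ for all $C\in\Y$, hence $C_q(X)\in{}^{\bot}\Y=\X$. The converse is the same computation read backwards.
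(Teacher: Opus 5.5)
Your proposal is correct and follows the paper's proof step for step. In (b) you get exactness from Lemma \ref{lem:exact object1} using $A\in\mathcal{X}$, and in (a) from Lemma \ref{lem:exact object2} using an injective cogenerator $E\in\mathcal{Y}$. Lemma \ref{lem:exact iso} then turns the vanishing of $\mathrm{Ext}^1$ against stalk objects into $K_q(Y)\in\mathcal{Y}$ in (b) and $C_q(X)\in\mathcal{X}$ in (a), and the same isomorphism read backwards gives both converses. The only additions are that you justify $A\in\mathcal{X}$ and $E\in\mathcal{Y}$, which the paper simply asserts, and that you make explicit that Lemma \ref{lem:exact object2} is being used in its $i=1$ form.
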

\begin{proof}
$(\mathrm{a}).$ If $X\in \widetilde{\mathcal{X}}$, then $\Ext[\Q,A]{1}{X}{S_{q}(Y)}=0$ and $X(q)\in \mathcal{X}$ for every $q\in \Q$ and $Y\in \mathcal{Y}.$
Let $E$ be an injective cogenerator in $\Mod{A}.$
Then $E\in \mathcal{Y}$, and then $\Ext[\Q,A]{1}{X}{S_{q}(E)}=0$.
It follows from Lemma \ref{lem:exact object2} that $X$ is exact.
 According to Lemma \ref{lem:exact iso}, one has $$0=\Ext[\Q,A]{1}{X}{S_{q}(Y)}\is\Ext[A]{1}{C_{q}(X)}{Y},$$
implying that $C_{q}(X)$ is in $\mathcal{X}.$
Conversely, if $X$ is exact, $X(q)$ and $C_{q}(X)$ are in $\mathcal{X}$ for all $q\in \Q$, then it follows from Lemma \ref{lem:exact iso} that $$\Ext[\Q,A]{1}{X}{S_{q}(Y)}\is\Ext[A]{1}{C_{q}(X)}{Y}=0$$ for any $Y\in \mathcal{Y}$.
Therefore, $X\in \widetilde{\mathcal{X}}$ by Definition \ref{df:cotorsion pair}.

$(\mathrm{b}).$ If $Y\in \widetilde{\mathcal{Y}},$ then $\Ext[\Q,A]{1}{S_{q}(X)}{Y}=0$ and $Y(q)\in \mathcal{Y}$ for every $q\in\Q$ and $X\in \mathcal{X}.$
Note that $A\in \mathcal{X}$. Then $\Ext[\Q,A]{1}{S_{q}(A)}{Y}=0$.
We conclude that $Y$ is exact by Lemma \ref{lem:exact object1}.
Hence it follows from Lemma \ref{lem:exact iso} that $$0=\Ext[\Q,A]{1}{S_{q}(X)}{Y}=\Ext[\Q,A]{1}{X}{K_{q}(Y)}.$$
So $K_{q}(Y)$ is in $\mathcal{Y}$.
Conversely, if $Y$ is exact, $Y(q)$ and $K_{q}(Y)$ are in $\mathcal{Y}$ for each $q\in \Q$, then, by Lemma \ref{lem:exact iso}, $$\Ext[\Q,A]{1}{S_{q}(X)}{Y}\cong \Ext[A]{1}{X}{K_{q}(Y)}=0$$
for any $X\in \mathcal{X}.$
Hence $Y\in \widetilde{\mathcal{Y}}$ by Definition \ref{df:cotorsion pair}.
\end{proof}

\begin{cor}\label{cor:hereditary cotorsion pair}
 Let $(\mathcal{X},\mathcal{Y})$ be a cotorsion pair in $\Mod{A}$, and let $(\mathsf{dg} \mathcal{X},\widetilde{\mathcal{Y}})$ and $(\widetilde{\mathcal{X}},\mathsf{dg} \mathcal{Y})$ be the induced cotorsion pairs in $\Mod{\Q,A}$. Then the following statements are equivalent.
\begin{eqc}
\item $(\mathcal{X},\mathcal{Y})$ is hereditary.
\item $(\widetilde{\mathcal{X}},\mathsf{dg} \mathcal{Y})$ is hereditary.
\item $(\mathsf{dg} \mathcal{X},\widetilde{\mathcal{Y}})$ is hereditary.
\end{eqc}
\end{cor}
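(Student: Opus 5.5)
The plan is to use the standard characterization of hereditary cotorsion pairs in an abelian category with enough projectives and injectives. For a cotorsion pair $(\mathcal{U},\mathcal{V})$ the following are equivalent:
\begin{itemize}
\item $\mathcal{V}$ is closed under cokernels of monomorphisms;
\item $\Ext{2}{\mathcal{U}}{\mathcal{V}}=0$;
\item $\mathcal{U}$ is closed under kernels of epimorphisms.
\end{itemize}
The proof is dimension shifting along $0\to V\to I\to V'\to 0$ with $I$ injective (note $I\in\mathcal{V}$), and dually along a projective cover. Both $\Mod{A}$ and $\Mod{\Q,A}$ have enough projectives and injectives. So for each cotorsion pair it suffices to check \emph{one} of the two closure properties. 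I will check closure of $\widetilde{\mathcal{Y}}$ under cokernels of monomorphisms, and closure of $\widetilde{\mathcal{X}}$ under kernels of epimorphisms.

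\proofofimp{i}{iii} Let $0\to Y_1\to Y_2\to Y_3\to 0$ be exact in $\Mod{\Q,A}$ with $Y_1,Y_2\in\widetilde{\mathcal{Y}}$. I use the description of Proposition \ref{prop:KC}(b).
\begin{itemize}
\item \emph{Exactness of $Y_3$.} $Y_1,Y_2$ are exact, and $\mathscr{E}$ is thick since it is the trivial class of a Hovey triple (\ref{exact object}). Hence $Y_3$ is exact.
\item \emph{Components.} Evaluation at $q$ is exact, so $Y_3(q)$ is the cokernel of the monomorphism $Y_1(q)\to Y_2(q)$ between objects of $\mathcal{Y}$. By (i), $Y_3(q)\in\mathcal{Y}$.
\item \emph{$K_q$.} Applying $K_q=\Hom[\Q]{S\langle q\rangle}{-}$ gives a long exact sequence whose obstruction term is $\mathbb{H}^1_{[q]}(Y_1)$. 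This vanishes because $Y_1$ is exact, by \cite[Theorem 7.1]{HJ21}, as in the proof of Lemma \ref{lem:exact iso}. So $0\to K_q(Y_1)\to K_q(Y_2)\to K_q(Y_3)\to 0$ is exact, and (i) gives $K_q(Y_3)\in\mathcal{Y}$.
\end{itemize}
Proposition \ref{prop:KC}(b) then yields $Y_3\in\widetilde{\mathcal{Y}}$.

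\proofofimp{i}{ii} This is dual. Take $0\to X_1\to X_2\to X_3\to 0$ with $X_2,X_3\in\widetilde{\mathcal{X}}$. By thickness, $X_1$ is exact. Since $X_3$ is exact, $\mathbb{H}_1^{[q]}(X_3)=L_1C_q(X_3)=0$, so $C_q$ preserves the sequence. Then (i) and Proposition \ref{prop:KC}(a) give $X_1\in\widetilde{\mathcal{X}}$.

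\proofofimp{iii}{i} Let $0\to M_1\to M_2\to M_3\to 0$ be exact in $\Mod{A}$ with $M_1,M_2\in\mathcal{Y}$, and fix $q\in\Q$.
\begin{itemize}
\item Since $G_q$ is exact, $0\to G_q(M_1)\to G_q(M_2)\to G_q(M_3)\to 0$ is exact.
\item By Lemma \ref{lem:contain contractible objects}(a), the first two terms lie in $\widetilde{\mathcal{Y}}$, so hereditarity gives $G_q(M_3)\in\widetilde{\mathcal{Y}}$. In particular $G_q(M_3)(q)=\Hom[\kk]{\Q(q,q)}{M_3}\in\mathcal{Y}$.
\item By the strong retraction property, $\kk\cdot\id_q$ is a $\kk$-direct summand of $\Q(q,q)$. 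Hence $M_3$ is a direct summand of $\Hom[\kk]{\Q(q,q)}{M_3}$.
\item The right half of a cotorsion pair is closed under direct summands, so $M_3\in\mathcal{Y}$.
\end{itemize}

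\proofofimp{ii}{i} This is dual. Apply the exact functor $F_q$ to a short exact sequence with last two terms in $\mathcal{X}$, and use Lemma \ref{lem:contain contractible objects}(c). Then use that $M$ is a summand of $F_q(M)(q)=\Q(q,q)\otimes_\kk M$.

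The main obstacle is the forward implications. There one must know that the functors $K_q$ and $C_q$ preserve the relevant short exact sequences, and this is exactly where the thickness of $\mathscr{E}$ and the vanishing of $\mathbb{H}^1_{[q]}$ and $\mathbb{H}_1^{[q]}$ on exact objects from \cite[Theorem 7.1]{HJ21} are needed.
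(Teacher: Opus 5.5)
Your proof is correct. The forward implications (i)$\Rightarrow$(ii) and (i)$\Rightarrow$(iii) follow the paper's argument: thickness of $\mathscr{E}$, then vanishing of $\mathbb{H}_1^{[q]}$ (resp.\ $\mathbb{H}^1_{[q]}$) on exact objects so that $C_q$ (resp.\ $K_q$) preserves the sequence, and then Proposition~\ref{prop:KC}.

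The reverse implications take a different route.
\begin{itemize}
\item For (ii)$\Rightarrow$(i), the paper works with the \emph{other} class of the induced pair. It applies $S_q$ to $0\to Y_1\to Y_2\to Y_3\to 0$ with $Y_1,Y_2\in\mathcal{Y}$. Lemma~\ref{lem:contain contractible objects}(d) and closure of $\mathsf{dg}\mathcal{Y}$ under cokernels of monomorphisms give $S_q(Y_3)\in\mathsf{dg}\mathcal{Y}$. It then tests against $F_q(X)\in\widetilde{\mathcal{X}}$ via Lemma~\ref{n-iso}, using $S_q(Y_3)(q)\cong Y_3$. Its (iii)$\Rightarrow$(i) is the dual with $S_q$ and $G_q$.
\item You instead stay inside $\widetilde{\mathcal{X}}$ and $\widetilde{\mathcal{Y}}$. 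You push the sequence through $F_q$ or $G_q$, evaluate at $q$, and recover the module as a direct summand from the splitting $\Q(q,q)=\kk\cdot\id_q\oplus\mathfrak{r}_q$.
\end{itemize}
Both routes are equally short. Yours trades the adjunction isomorphism and the identification $S\langle q\rangle(q)\cong\kk$ for closure of $\mathcal{X}$ and $\mathcal{Y}$ under direct summands. It also has the mild advantage that only the explicitly described classes $\widetilde{\mathcal{X}},\widetilde{\mathcal{Y}}$ are ever used.

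You also state explicitly that, in a category with enough projectives and injectives, one closure condition suffices for hereditariness. The paper uses this tacitly in every direction, since it always checks only one closure property, so on this point your write-up is more complete.

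One wording fix: the dimension shift on the other side should use an epimorphism from a projective object, not a projective cover. Projective covers need not exist in $\Mod{A}$ or $\Mod{\Q,A}$.
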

\begin{proof}
\proofofimp{i}{ii} Let $0\to X'\to X\to X''\to 0$ be an exact sequence in $\Mod{\Q,A}$ with $X,X''\in \widetilde{\mathcal{X}}.$
Then it follows from Proposition \ref{prop:KC} that $X$ and $X''$ are exact object, and then so is $X'$ by \cite[Theorem 4.1]{HJ21}.
For each $q\in \Q,$ we have the following exact sequence
$$L_{1}C_{q}(X'')\to C_{q}(X')\to C_{q}(X)\to C_{q}(X'')\to 0.$$
Note that $C_{q}=S\{q\}\otimes_{\Q}-$.
Then $L_{1}C_{q}(X'')=\mathbb{H}_{1}^{[q]}(X'')=0$ by \cite[Theorem 7.1]{HJ21}.
Hence we have the following exact sequence
$$0\to C_{q}(X')\to C_{q}(X)\to C_{q}(X'')\to 0.$$
Since $X$ and $X''$ are in $\widetilde{\mathcal{X}}$, it follows from Lemma \ref{prop:KC} that $C_{q}(X)$ and $C_{q}(X'')$ are in $\mathcal{X}$.
So we have $C_{q}(X')\in \mathcal{X}$ as $(\X,\Y)$ is hereditary. By Lemma \ref{prop:KC} again, $X'\in \widetilde{\mathcal{X}}.$
Hence $(\widetilde{\mathcal{X}},\mathsf{dg} \mathcal{Y})$ is hereditary.

\proofofimp{ii}{i} Let $0\to Y_{1}\to Y_{2}\to Y_{3}\to 0$ be an exact sequence in $\Mod{A}$ with $Y_{1},Y_{2}\in \mathcal{Y}.$
Then for each $q\in \Q,$ the sequence
$$0\to S_{q}(Y_{1})\to S_{q}(Y_{2})\to S_{q}(Y_{3})\to 0$$
is exact, and $S_{q}(Y_{1})$ and $S_{q}(Y_{2})$ are in $\mathsf{dg}\mathcal{Y}$ by Lemma \ref{lem:contain contractible objects}$(\mathrm{d})$.
Hence $S_{q}(Y_{3})\in \mathsf{dg}\mathcal{Y}.$
For any $X\in \mathcal{X}$, we have $F_{q}(X)\in \widetilde{\mathcal{X}}$ by Lemma \ref{lem:contain contractible objects}$(\mathrm{c})$.
Consequently, it follows from Lemma \ref{n-iso} that $$0=\Ext[\Q,A]{1}{F_{q}(X)}{S_{q}(Y_{3})}\cong \Ext[A]{1}{X}{Y_{3}},$$
proving $Y_{3}\in \mathcal{Y}$, as desired.

The $(i) \Longleftrightarrow (iii)$ can be proved by a dual argument as above.
\end{proof}

Recall from \cite[Definition 5.13]{HJ23} that a \emph{cycle} in $\Q$ is a sequence of morphisms in $\Q$
$$q_{1}\xrightarrow{g_{1}}q_{2}\xrightarrow{g_{2}}\cdots\xrightarrow{g_{n-1}}q_{n}\xrightarrow{g_{n}}q_{n+1},$$
with $q_{1}=q_{n+1}$, where each $g_{i}$ is non-zero and belongs to the pseudo-radical $\mathfrak{r}$ from \ref{adjoint triple2}.
By \cite[Definition 5.1]{HJ23},
the support of an object $X$ in $\Mod{\Q,A}$ is defined as
$$\mathrm{supp} X=\{q\in \Q \mid X(q)\neq 0\}.$$ We say that $X$ has finite support if $\mathrm{supp} X$ is a finite set.

For a class $\sf S$ of left $A$-modules, we denote by $\mathrm{add} (\sf S)$ the full subcategory consisting of direct summands of finite direct sums of modules in $\sf S$.
\begin{lem}\label{lem:support}
Let $\sf S$ be a class of $A$-modules and $X\in \Mod{\Q,A}$.
Assume that $\Q$ has no cycles.
If $\{S_{q}(C)\ | \ C\in \mathsf{S} ~\text{and}~ q\in \Q \}\subseteq {^{\bot}X}$, then
$$\{Z\in \Mod{\Q,A}\ | \ Z~\text{has finite support and}\ Z(q)\in \mathrm{add}(\mathsf{S})~ \text{for each}\ q\in \Q \}\subseteq {^{\bot}X}.$$
\end{lem}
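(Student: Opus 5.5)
We will show that every $Z$ with finite support and $Z(q)\in\mathrm{add}(\mathsf S)$ for all $q$ is built, by finitely many extensions, from objects of the form $S_q(C')$ with $C'\in\mathrm{add}(\mathsf S)$. Since ${^{\bot}X}$ is closed under extensions and finite direct sums/summands, and $\Ext[\Q,A]{1}{S_q(-)}{X}$ is additive in the argument ($S_q$ is additive), the hypothesis $S_q(C)\in{^{\bot}X}$ for $C\in\mathsf S$ gives $S_q(C')\in{^{\bot}X}$ for every $C'\in\mathrm{add}(\mathsf S)$. The argument is then an induction on the size of $\mathrm{supp}\,Z$, or more precisely on a filtration adapted to the absence of cycles.

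\textbf{Step 1: a source (or sink) in the support.} Let $\mathrm{supp}\,Z=\{q_1,\dots,q_n\}$. Because $\Q$ has no cycles in the sense of \cite[Definition 5.13]{HJ23}, we can choose $q\in\mathrm{supp}\,Z$ such that $\mathfrak r(q,p)$ acts by zero on $Z$ for all $p\in\mathrm{supp}\,Z$. In other words, $Z(g)=0$ for every $g\in\mathfrak r(q,p)$; this makes $q$ a ``sink'' for radical morphisms.

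To find such a $q$, suppose none exists. Then starting at any point of the support we can repeatedly find a radical morphism $g$ with $Z(g)\neq0$ to another point of the support. Since the support is finite, some point repeats, and the composable chain of nonzero radical morphisms forms a cycle, which is excluded. If composability of ``nonzero on $Z$'' requires care, we use only that each $g_i$ is nonzero in $\Q$, which is what the definition of cycle asks.

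\textbf{Step 2: splitting off a stalk.} For such a $q$, the subfunctor $Z'\subseteq Z$ with $Z'(q)=Z(q)$ and $Z'(p)=0$ for $p\neq q$ is indeed a subfunctor. Radical endomorphisms of $q$ act trivially by the choice of $q$. Morphisms $q\to p$ with $p\neq q$ lie in $\mathfrak r$ by the strong retraction property, since $\Q(q,p)=\mathfrak r(q,p)$ for $p\neq q$, so they also act trivially. Hence $Z'$ is a functor on which $\mathfrak r$ acts by zero, concentrated at $q$, and therefore $Z'\cong S_q(Z(q))$ (stalk objects are exactly those killed by $\mathfrak r$ and supported at one object; cf. \cite[Section 7]{HJ21}).

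The quotient $Z/Z'$ has support $\mathrm{supp}\,Z\setminus\{q\}$, with values $Z(p)\in\mathrm{add}(\mathsf S)$. By induction, $Z/Z'\in{^{\bot}X}$, and $S_q(Z(q))\in{^{\bot}X}$ by the opening remark. Applying $\Hom[\Q,A]{-}{X}$ to $0\to Z'\to Z\to Z/Z'\to0$ gives $\Ext^1(Z,X)=0$. The base case is $Z=0$.

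\textbf{Main obstacle.} The delicate point is Step 1 together with the identification $Z'\cong S_q(Z(q))$. This requires knowing that $\mathfrak r(q,q)$ acts nilpotently, which holds by Setup \ref{setup}(2), and also that the non-identity part of $\Q(q,q)$ acts trivially. If choosing a sink directly is awkward, I would instead use nilpotency of $\mathfrak r$. Filter $Z$ by $\mathfrak r^iZ$: each layer $\mathfrak r^iZ/\mathfrak r^{i+1}Z$ is killed by $\mathfrak r$, hence is a finite direct sum of stalks $S_p(M_p)$. The difficulty with this route is that $M_p$ need not lie in $\mathrm{add}(\mathsf S)$, which is exactly why the no-cycles hypothesis and the vertexwise sink argument above are needed.
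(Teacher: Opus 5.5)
Your argument is correct and is essentially the paper's proof: induction on $|\mathrm{supp}\,Z|$, splitting off the stalk subobject $S_q(Z(q))\subseteq Z$ at a vertex $q$ on which $\mathfrak r(q,-)$ acts by zero, and finishing with the long exact $\operatorname{Ext}$ sequence. The vanishing of $\mathfrak r(q,-)$ on $Z$ is the paper's condition $K_q(Z)=Z(q)$; the paper quotes it from \cite[Lemma 5.14]{HJ23}, whereas you derive it directly from the no-cycles hypothesis by pigeonhole. Your explicit reduction from $\mathsf S$ to $\mathrm{add}(\mathsf S)$ fills a step the paper leaves implicit, and the nilpotency concern in your last paragraph is unnecessary, since your choice of $q$ already makes $\mathfrak r(q,q)$ act by zero.
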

\begin{proof}
 Let $Z\in \Mod{\Q,A}$ such that $Z$ has finite support and $Z(q)\in \mathrm{add}(\sf S)$ for each $q\in \Q.$
We argue by induction on the cardinality $n=|\mathrm{supp}Z|$ of the support of $Z.$

If $n=0$, then $\mathrm{supp}Z=\emptyset$, and so $Z=0\in {^{\bot}X}.$
Assume that $n>0$.
By \cite[Lemma 5.14]{HJ23}, there exists $q\in\Q$ such that $K_{q}(Z)=Z(q).$
Let $\varepsilon_{q}$ be the counit of the adjunction $(S_{q},K_{q})$ from \ref{adjoint triple2}.
Note that the morphism $$S_{q}(Z(q))=S_{q}K_{q}(Z)\xrightarrow{\varepsilon_{q}^{Z}}Z$$
is monic since $\varepsilon_{q}^{Z}(q):Z(q)\to Z(q)$ is the identity map and $\varepsilon_{q}^{Z}(p):0\to Z(p)$ is the zero map for $p\neq q.$
Hence one has an exact sequence in $\Mod{\Q,A}$
\begin{equation*}\label{diag01}
\tag{\ref{lem:support}.1}
0\to S_{q}(Z(q))\to Z\to Y\to 0,
\end{equation*}
where $Y(q)=0$ and $Y(p)=Z(p)$ for $p\neq q.$
It follows that $Y(q)\in \mathrm{add}(\sf S)$ and $|\mathrm{supp}Y|=n-1.$
By induction hypothesis, we conclude that $Y\in {^{\bot}X}.$
Application of the functor $\Hom[\Q,A]{-}{X}$ to the sequence (\ref{diag01}) yields an exact sequence
$$\Ext[\Q,A]{1}{Y}{X}\to \Ext[\Q,A]{1}{Z}{X}\to \Ext[\Q,A]{1}{S_{q}(Z(q))}{X}.$$
Since $\Ext[\Q,A]{1}{Y}{X}=0=\Ext[\Q,A]{1}{S_{q}(Z(q))}{X}$, it follows that $Z\in {^{\bot}X}$.
Thus we complete the proof of the lemma.
\end{proof}

\begin{thm}\label{thm:complete cotorsion pair}
Let $(\mathcal{X},\mathcal{Y})$ be the complete cotorsion pair generated by a set $\sf S$ containing a generator $U$ in $\Mod{A}$.
Assume that $\Q$ has no cycles.
Then $(\mathsf{dg}\mathcal{X},\widetilde{\mathcal{Y}})$ is a complete cotorsion pair in $\Mod{\Q,A}$, and it is generated by the set $\{S_{q}(C)\ | \ C\in \mathsf{S}, q\in \Q \}.$
\end{thm}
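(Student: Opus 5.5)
The plan is to show first that $\widetilde{\mathcal{Y}}=\mathsf{T}^{\bot}$, where
\[
\mathsf{T}=\{S_{q}(C)\mid C\in\mathsf{S},\ q\in\Q\}\cup\{F_q(U)\mid q\in\Q\}.
\]
Then I will invoke the Eklof--Trlifaj theorem: a cotorsion pair generated by a set in a Grothendieck category with enough projectives (here $\Mod{\Q,A}$ is one) is complete. Finally I will remove the extra generators $F_q(U)$, so that the set $\{S_q(C)\}$ alone generates.

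\textbf{Step 1: $\mathsf{T}^{\bot}\subseteq\widetilde{\mathcal{Y}}$.} Let $Y\in\mathsf{T}^{\bot}$.
\begin{itemize}
\item By Lemma \ref{n-iso}, $\Ext[A]{1}{C}{Y(q)}\cong\Ext[\Q,A]{1}{F_q(C)}{Y}$.
\item To make this vanish I use Lemma \ref{lem:support} together with the fact that $\Q$ has no cycles. The functor $F_q(C)=\Q(q,-)\otimes_\kk C$ has finite support by local boundedness, but its values are $\Q(q,p)\otimes_\kk C$. Since $\Q(q,p)$ is finitely generated projective over $\kk$, this is a summand of $C^n$, so it lies in $\mathrm{add}(\mathsf{S})$ when $C\in\mathsf{S}$.
\item Lemma \ref{lem:support} therefore gives $\Ext[\Q,A]{1}{F_q(C)}{Y}=0$ for $C\in\mathsf S$. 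Hence $Y(q)\in\mathsf{S}^{\bot}=\mathcal{Y}$.
\end{itemize}
This also shows the $F_q(U)$ are redundant as generators, so $\mathsf{T}^{\bot}=\{S_q(C)\mid C\in\mathsf S,\ q\in\Q\}^{\bot}$.

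The remaining condition is $\Ext[\Q,A]{1}{S_q(C)}{Y}=0$ for all $C\in\mathcal{X}$, not just $C\in\mathsf S$. This is the main obstacle.
\begin{itemize}
\item First I show $Y$ is exact. Since $U$ is a generator, $A$ is a summand of a quotient of $U^{(I)}$, which is awkward. It is simpler to use finite support again: $S_q(U)$ is in ${}^{\bot}Y$, and since $U$ is a generator, $A\in\mathrm{add}$-type closures of $\mathsf S$ in the cotorsion-pair sense.
\item The cleaner route is to exploit that, once $Y(p)\in\mathcal Y$ for all $p$, the class ${}^{\bot}Y\cap\{S_q(M)\}$ corresponds to $\{M : \Ext[\Q,A]{1}{S_q(M)}{Y}=0\}$.
\item I would show this class of $M$ is closed under direct sums, extensions, transfinite extensions (Eklof's lemma, since $S_q$ is exact and preserves colimits) and direct summands.
\item By the standard description, $\mathcal{X}={}^{\bot}(\mathsf S^{\bot})$ consists of direct summands of $\mathsf S\cup\{U\}$-filtered modules; this uses that $\mathsf S$ contains the generator $U$ (Eklof--Trlifaj).
\item Hence $\Ext[\Q,A]{1}{S_q(C)}{Y}=0$ for all $C\in\mathcal X$, so $Y\in\widetilde{\mathcal Y}$.
\end{itemize}

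\textbf{Step 2: $\widetilde{\mathcal{Y}}\subseteq\mathsf{T}^{\bot}$.} This is immediate from Definition \ref{df:cotorsion pair} and Lemma \ref{lem:contain contractible objects}(b), since $\mathsf S\subseteq\mathcal X$.

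\textbf{Conclusion.} Thus $\widetilde{\mathcal Y}=\{S_q(C)\mid C\in\mathsf S,\ q\in\Q\}^{\bot}$. Proposition \ref{prop:cotorsion pair} gives $\mathsf{dg}\mathcal X={}^{\bot}\widetilde{\mathcal Y}$, so the cotorsion pair $(\mathsf{dg}\mathcal{X},\widetilde{\mathcal{Y}})$ is generated by a set. It is complete by the Eklof--Trlifaj theorem for Grothendieck categories with a projective generator, such as $\bigoplus_q F_q(A)$.

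The step I expect to be hardest is the passage from $C\in\mathsf S$ to all $C\in\mathcal X$ via filtrations. That is exactly where the hypothesis that $\mathsf S$ contains a generator is used.
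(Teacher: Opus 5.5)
Your proposal is correct, but on the key step it takes a different route from the paper.

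\textbf{The step $Y(q)\in\mathcal Y$.} You apply Lemma~\ref{lem:support} directly to $F_q(C)$ and then Lemma~\ref{n-iso}. The paper does the same with $\mathbb G(S_q(C))=G_q(C)\cong F_{\mathbb S^{-1}q}(C)$, so yours only avoids the Serre-functor detour.

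\textbf{Passing from $C\in\mathsf S$ to all $C\in\mathcal X$.} This is where the two proofs genuinely diverge.

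The paper uses no filtrations. It shows $\Ext[\Q,A]{1}{S_q(A)}{Y}=0$, so $Y$ is exact by Lemma~\ref{lem:exact object1}. Lemma~\ref{lem:exact iso}(a) then gives $\Ext[\Q,A]{1}{S_q(C)}{Y}\cong\Ext[A]{1}{C}{K_q(Y)}$, hence $K_q(Y)\in\mathsf S^{\bot}=\mathcal Y$, and Proposition~\ref{prop:KC}(b) finishes. The point you called awkward is routine. Since $A$ is projective, a surjection $U^{(I)}\to A$ splits, and because $A$ is finitely generated the splitting already lands in a finite subsum $U^n$. So $A\in\mathrm{add}(U)$, which gives $\Ext[\Q,A]{1}{S_q(A)}{Y}=0$; completing your abandoned first bullet this way is exactly the paper's proof.

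Your second route instead shows that $\{M\mid \Ext[\Q,A]{1}{S_q(M)}{Y}=0\}$ has the following properties:
\begin{itemize}
\item it contains $\mathsf S$;
\item it is closed under transfinite extensions, by Eklof's lemma in the Grothendieck category $\Mod{\Q,A}$, using only that $S_q$ is exact and preserves direct limits;
\item it is closed under direct summands.
\end{itemize}
You then invoke the Eklof--Trlifaj description of $\mathcal X={}^{\bot}(\mathsf S^{\bot})$ as summands of $\mathsf S\cup\{A\}$-filtered modules. The module $A$ is handled by $A\in\mathrm{add}(U)$ and the same closure properties. This is valid.

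\textbf{Comparison.} Your argument is more formal. It needs neither exactness of $Y$, nor Lemma~\ref{lem:exact iso}, nor Proposition~\ref{prop:KC}, and it would work for any exact colimit-preserving functor in place of $S_q$. The cost is the heavier filtration and set-theoretic machinery. The paper's argument is a few lines, given the Holm--J{\o}rgensen vanishing of $\mathbb H^1_{[q]}$ on exact objects.

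\textbf{Tidying.} Drop the vague first bullet (``$A\in\mathrm{add}$-type closures \dots''). Also drop the phrase ``once $Y(p)\in\mathcal Y$'' in the second bullet, since the filtration argument does not use it.
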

\begin{prf*}
By \cite[Corollary 2.15(3)]{SaorinStovicek}, we only need to show
$$\widetilde{\mathcal{Y}}= \{S_{q}(C)\ | \ C\in \mathsf{S}, q\in \Q \}^{\bot},$$
implying that $(\mathsf{dg} \mathcal{X},\widetilde{\mathcal{Y}})$ is a complete cotorsion pair in $\Mod{\Q,A}.$
We mention that $$\widetilde{\mathcal{Y}}\subseteq \{S_{q}(C)\ | \ C\in \mathsf{S}, q\in \Q \}^{\bot}$$
is clear.
We only need to prove that $\{S_{q}(C)\ | \ C\in \mathsf{S}, q\in \Q \}^{\bot}\subseteq\widetilde{\mathcal{Y}}.$
For each $q\in \Q$ and $C\in \sf S$, by \cite[Proposition 5.7]{HJ23}, we have the following objectwise split exact sequence
\begin{equation*}\label{diag02}
\tag{\ref{thm:complete cotorsion pair}.1}
0\to S_{q}(C)\to \mathbb{G}(S_{q}(C))\to \mathbb{C}(S_{q}(C))\to 0.
\end{equation*}
Since $\mathrm{supp}S_{q}(C)=q,$ it follows from \cite[Lemma 5.8]{HJ23} that both $\mathbb{G}(S_{q}(C))$ and $\mathbb{C}(S_{q}(C))$ have finite support.
Note that $$\mathbb{G}(S_{q}(C))(p)=G_{q}(C)(p)=\Hom[\kk]{\Q(p,q)}{C}.$$
It follows that
$\mathbb{G}(S_{q}(C))(p)\in \mathrm{add}(\sf S)$ for each $p\in \Q$
as $\Q(p,q)$ is a finitely generated and projective as a $\kk$-module.
For any $Y\in \{S_{q}(C)\ | \ C\in \mathsf{S}, q\in \Q \}^{\bot}$, it follows from Lemma \ref{lem:support} that $\Ext[\Q,A]{1}{\mathbb{G}(S_{q}(C))}{Y}=0.$
On the other hand, we have the following isomorphisms
\begin{align*}
\Ext[\Q,A]{1}{\mathbb{G}(S_{q}(C))}{Y}&=\Ext[\Q,A]{1}{G_{q}(C)}{Y}\\
&\cong\Ext[\Q,A]{1}{F_{\mathbb{S}^{-1}q}(C)}{Y}\\
&\cong\Ext[A]{1}{C}{Y(\mathbb{S}^{-1}q)}
\end{align*}
for any $q\in \Q$ and $C\in \sf S$, where the first isomorphism holds by \cite[Lemma 3.4]{HJ23}, and the second isomorphism follows from Lemma \ref{n-iso}.
Hence $\Ext[A]{1}{C}{Y(\mathbb{S}^{-1}q)}=0,$ and so
 it yields that $Y(q)\in \mathcal{Y}$
as $\mathbb{S}:\Q\to \Q$ is an auto-equivalence.
Since $Y\in \{S_{q}(C)\ | \ C\in \mathsf{S}, q\in \Q \}^{\bot}$ and $\sf S$ contains a generator $U$, it follows from Lemma \ref{lem:exact object1} that $Y$ is exact.
Hence for any $C\in \sf S$ and $q\in \Q,$ we have
$$0=\Ext[\Q,A]{1}{S_{q}(C)}{Y}\cong \Ext[A]{1}{C}{K_{q}(Y)}$$
by Lemma \ref{lem:exact iso},
implying that $K_{q}(Y)\in \mathcal{Y}.$
Hence we conclude that $Y\in\WY$ by Lemma \ref{prop:KC}$(\mathrm{b})$.
\end{prf*}

\begin{prp}\label{thm:projective cotorsion pair}
Let $(\mathcal{X},\mathcal{Y})$ be a projective cotorsion pair generated by a set $\sf S$ containing a  generator $U$ in $\Mod{A}$. Assume that $\Q$ has no cycles.
Then $(\sf dg \mathcal{X},\widetilde{\mathcal{Y}})$ is a projective cotorsion pair in $\Mod{\Q,A}.$
\end{prp}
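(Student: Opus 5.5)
Completeness of $(\mathsf{dg}\mathcal{X},\widetilde{\mathcal{Y}})$ is Theorem \ref{thm:complete cotorsion pair}: $(\mathcal{X},\mathcal{Y})$ is complete (being projective) and generated by $\mathsf S$, and $\Q$ has no cycles. By Definition \ref{df:projective cotorsion pair} it then remains to prove two facts: (1) $\widetilde{\mathcal{Y}}$ is thick; (2) $\mathsf{dg}\mathcal{X}\cap\widetilde{\mathcal{Y}}={_{\Q,A}\mathsf{Prj}}$.

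For (1) we use Proposition \ref{prop:KC}(b): $Y\in\widetilde{\mathcal{Y}}$ if and only if $Y$ is exact, $Y(q)\in\mathcal{Y}$ and $K_q(Y)\in\mathcal{Y}$ for all $q$. Take a short exact sequence $0\to Y_1\to Y_2\to Y_3\to 0$ with two terms in $\widetilde{\mathcal{Y}}$.
\begin{itemize}
\item[$\circ$] Exact objects form a thick class by \cite[Theorem 4.1]{HJ21}, so all three terms are exact.
\item[$\circ$] Evaluation at $q$ is exact, and $\mathcal{Y}$ is thick, so all $Y_i(q)\in\mathcal{Y}$.
\item[$\circ$] Since $Y_1$ is exact, $\mathbb{H}^1_{[q]}(Y_1)=0$ by \cite[Theorem 7.1]{HJ21}. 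Hence applying $K_q=\Hom[\Q]{S\langle q\rangle}{-}$ gives a short exact sequence $0\to K_q(Y_1)\to K_q(Y_2)\to K_q(Y_3)\to 0$, exactly as in the proof of Lemma \ref{lem:exact iso}. Thickness of $\mathcal{Y}$ then gives the third $K_q(Y_i)\in\mathcal{Y}$.
\end{itemize}
So $\widetilde{\mathcal{Y}}$ is thick. As a side remark, $(\mathcal{X},\mathcal{Y})$ is hereditary, since a thick $\mathcal{Y}$ is closed under cokernels of monomorphisms; hence so is the induced pair by Corollary \ref{cor:hereditary cotorsion pair}.

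For (2), first show ${_{\Q,A}\mathsf{Prj}}\subseteq\mathsf{dg}\mathcal{X}\cap\widetilde{\mathcal{Y}}$.
\begin{itemize}
\item[$\circ$] Projectives lie in $^{\bot}\widetilde{\mathcal{Y}}=\mathsf{dg}\mathcal{X}$ trivially.
\item[$\circ$] Every projective $P$ in $\Mod{\Q,A}$ is a summand of a coproduct of objects $F_q(A)$. Since $\widetilde{\mathcal{Y}}$ is a right orthogonal class, it is closed under products and summands, but not obviously under coproducts. So we argue directly via Proposition \ref{prop:KC}(b).
\item[$\circ$] $P$ is exact: $P^\natural$ is projective, so it has finite projective dimension.
\item[$\circ$] $P(q)$ is a summand of a direct sum of modules $\Q(p,q)\otimes_\kk A$. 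Each of these is a projective $A$-module, because $\Q(p,q)$ is finitely generated projective over $\kk$. So $P(q)\in{_A\mathsf{Prj}}=\mathcal{X}\cap\mathcal{Y}\subseteq\mathcal{Y}$.
\item[$\circ$] For $K_q(P)$: $K_q$ commutes with coproducts because $S\langle q\rangle$ is finitely presented; I expect this to follow from Hom-finiteness, local boundedness and nilpotency of $\mathfrak{r}$. Moreover $K_qF_p(A)$ is a summand of $F_p(A)(q)$ by the argument of \cite[Lemma 7.28]{HJ21}, hence projective. So $K_q(P)$ is projective, hence in $\mathcal{Y}$.
\end{itemize}
A cleaner alternative is to use that projective objects in $\Mod{\Q,A}$ are in $\mathscr{E}\cap{^\bot}\mathscr{E}$ and compute $K_q$ through \cite[Theorem 6.1]{HJ21}.

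Conversely, let $P\in\mathsf{dg}\mathcal{X}\cap\widetilde{\mathcal{Y}}$. Choose an epimorphism $Q\to P$ with $Q$ projective, and let $K$ be its kernel. By the previous paragraph $Q\in\widetilde{\mathcal{Y}}$, so thickness gives $K\in\widetilde{\mathcal{Y}}$. Since $P\in{^{\bot}\widetilde{\mathcal{Y}}}$, we get $\Ext[\Q,A]{1}{P}{K}=0$, so the sequence splits and $P$ is a summand of $Q$, hence projective.

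The main obstacle I expect is the inclusion ${_{\Q,A}\mathsf{Prj}}\subseteq\widetilde{\mathcal{Y}}$, specifically showing $K_q(P)\in\mathcal{Y}$ for infinite coproducts. I would first try to prove that $S\langle q\rangle$ admits a resolution by finitely generated projectives in $\Mod{\Q}$, which makes $K_q$ commute with coproducts. Failing that, I would note that $K_q(P)\subseteq P(q)$ is a submodule of a projective module and use the structure of $F_p(A)$ directly.
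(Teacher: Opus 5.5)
Your proposal is correct and follows the paper's proof essentially step for step:
\begin{itemize}
\item completeness from Theorem \ref{thm:complete cotorsion pair};
\item thickness of $\widetilde{\mathcal{Y}}$ via Proposition \ref{prop:KC}(b) and the vanishing of $\mathbb{H}^1_{[q]}$ on exact objects;
\item ${_{\Q,A}\mathsf{Prj}}\subseteq\widetilde{\mathcal{Y}}$ via coproduct-preservation of $K_q$, together with $K_qF_p(A)\cong K_qG_{\mathbb{S}p}(A)$, which is $A$ or $0$;
\item the splitting argument for the converse.
\end{itemize}
Your worry about $K_q$ is milder than you feared: $S\langle q\rangle$ is a quotient of $\Q(q,-)$, hence finitely generated, so $K_q$ commutes with coproducts.

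One small slip: $P^\natural$ need not be projective in $\Mod{\Q}$; already $F_q(A)^\natural$ is not when $A$ is not $\kk$-projective. It does have projective dimension at most $1$ because $\kk$ is hereditary, so projectives are still exact. Alternatively, note $\Ext[\Q,A]{i}{P}{S_q(E)}=0$ for an injective cogenerator $E$ of $\Mod{A}$ and apply Lemma \ref{lem:exact object2}.
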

\begin{proof}
By Theorem \ref{thm:complete cotorsion pair}, we know that $(\sf dg \mathcal{X},\widetilde{\mathcal{Y}})$ is a complete cotorsion pair in $\Mod{\Q,A}$.
Now we first show that $\widetilde{\mathcal{Y}}$ is thick.
Consider the exact sequence in $\Mod{\Q,A}$
\begin{equation*}\label{diag03}
\tag{\ref{thm:projective cotorsion pair}.1}
0\to Y_{1}\to Y_{2}\to Y_{3}\to 0.
\end{equation*}
If $Y_{2}$ and $Y_{3}$ are in $\widetilde{\mathcal{Y}}$, then $Y_{2}$ and $Y_{3}$ are exact by Proposition \ref{prop:KC}. It follows from \cite[Theorem 4.4]{HJ21} that $Y_{1}$ is exact.
Applying the functor $K_{q}$ to (\ref{diag03}), we obtain the following exact sequence
$$0\to K_{q}(Y_{1})\to K_{q}(Y_{2})\to K_{q}(Y_{3})\to R_{1}K_{q}(Y_{1}).$$
 Note from \cite[Theorem 7.1]{HJ21} that $R_{1}K_{q}(Y_{1})=\mathbb{H}_{[q]}^{1}(Y_{1})=0$ as $Y_{1}$ is exact.
 Thus we obtain the exact sequence $$0\to K_{q}(Y_{1})\to K_{q}(Y_{2})\to K_{q}(Y_{3})\to 0.$$
 Since the sequence (\ref{thm:projective cotorsion pair}.1) is exact, it implies that
 $$0\to Y_{1}(q)\to Y_{2}(q)\to Y_{3}(q)\to 0$$
 is exact for every $q\in\Q$.
 By Lemma \ref{prop:KC}($\mathrm{b}$), we know that $K_{q}(Y_{2}), Y_{2}(q)$,  $K_{q}(Y_{3})$ and $Y_{3}(q)$ are in $\Y.$
 Since $\mathcal{Y}$ is thick, it follows that both $K_{q}(Y_{1})$ and $Y_{1}(q)$ are in $\mathcal{Y}$.
 By Lemma \ref{prop:KC} again, $Y_{1}\in \widetilde{\mathcal{Y}}.$
 Similarly, if $Y_{1}$ and $Y_{2}$ are in $\widetilde{\mathcal{Y}}$, we can prove that $Y_{3}\in \widetilde{\mathcal{Y}}$.
 Note that $\widetilde{\mathcal{Y}}$ is closed under extensions as $(\mathsf{dg} \mathcal{X},\widetilde{\mathcal{Y}})$ is a  cotorsion pair.
 Therefore, $\widetilde{\mathcal{Y}}$ is  thick.
 Next we show that $\mathsf{dg}\mathcal{X}\cap\widetilde{\mathcal{ Y}}={_{\Q,A}\mathsf{Prj}}.$

$``\supseteq":$ Evidently, ${_{\Q,A}\mathsf{Prj}}\subseteq\mathsf{dg}\mathcal{X}.$
Now we show that ${_{\Q,A}\mathsf{Prj}}\subseteq\WY.$
Let $X\in {_{\Q,A}\mathsf{Prj}}$. Then, by the proof of \cite[Theorem 7.29]{HJ21}, we know that $X$ is exact and $X\cong \bigoplus_{q\in\Q}F_{q}C_{q}(X)$ with $C_{q}(X)\in {_{A}\mathsf{Prj}}$ for every $q\in\Q.$
For any $p\in\Q,$ we have
$$X(p)\cong \bigoplus_{q\in\Q} F_{q}C_{q}(X)(p)\cong \bigoplus_{q\in\Q}\Q(q,p)\otimes_{\kk}C_{q}(X).$$
Since $\Q(q,p)$ is a finitely generated and projective as a $\kk$-module, it follows that $X(p)\in {_{A}\mathsf{Prj}}\subseteq\mathcal{Y}$ as $(\mathcal{X},\mathcal{Y})$ is a projective cotorsion pair.
On the other hand, for any $p\in \Q$, one has the following isomorphisms of $A$-modules
\begin{align*}
K_{p}(X)&=K_{p}(\bigoplus_{q\in\Q} F_{q}C_{q}(X))\\
&\cong \bigoplus_{q\in\Q} K_{p}F_{q}C_{q}(X)\\
&\cong \bigoplus_{q\in\Q} K_{p}G_{\mathbb{S}(q)}C_{q}(X)\\
&=C_{\mathbb{S}^{-1}(p)}(X),
\end{align*}
where the first isomorphism holds since $K_{q}$ preserve coproducts by \cite[Proposition 4.6]{HJ23}, the second isomorphism follows from \cite[Lemma 3.4]{HJ23}, and the last equality holds by \cite[Lemma 7.28(b)]{HJ21}.
Thus $K_{p}(X)\in {_{A}\mathsf{Prj}}\subseteq\mathcal{Y}$ for every $p\in\Q.$
Consequently, by Proposition \ref{prop:KC}, $X\in \WY$ and then ${_{\Q,A}\mathsf{Prj}}\subseteq\WY.$

 $``\subseteq":$ Assume that $X$ belongs to $\mathsf{dg}\mathcal{X}\cap\widetilde{\mathcal{Y}}.$
 Since $\Mod{\Q,A}$ has enough projectives by \cite[Proposition 3.12]{HJ21}, we
 have an exact sequence $0\to Y\to P\to X\to 0$ in $\Mod{\Q,A}$ with $P$ projective.
 Note that $P\in \widetilde{\mathcal{Y}}$ by the above argument.
 It follows that $Y\in \widetilde{\mathcal{Y}}$ as $\widetilde{\mathcal{Y}}$ is thick.
 We also have $X\in \mathsf{dg}\mathcal{X},$ implying that $\Ext[\Q,A]{1}{X}{Y}=0$ and the sequence $0\to Y\to P\to X\to 0$ splits.
 Hence $X$ is a direct summand of the projective object $P$, and then $X\in {_{\Q,A}\mathsf{Prj}}.$
\end{proof}

\begin{thm}\label{thm:projective model structure}
Let $(\mathcal{X},\mathcal{Y})$ be a projective cotorsion pair generated by a set $\sf S$ containing a generator $U$ in $\Mod{A}$.
Assume that $\Q$ has no cycles.
Then there exists an abelian model structure on $\Mod{\Q,A}$ such that $\mathsf{dg}\mathcal{X}$ is the subcategory of cofibrant objects, $\widetilde{\mathcal{Y}}$ is the subcategory of trivial objects, and every object is fibrant.
Its homotopy  category is equivalent to the stable category $\mathsf{dg}\mathcal{X}/_{\Q,A}\mathsf{Prj}$.
\end{thm}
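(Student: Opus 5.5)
The plan is to combine Proposition \ref{thm:projective cotorsion pair} with Remark \ref{rmk:projective model structure}, and then identify the homotopy category using the standard description for projective (hereditary) model structures.

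\textbf{Step 1: the model structure.} The hypotheses of Proposition \ref{thm:projective cotorsion pair} hold, so $(\mathsf{dg}\mathcal{X},\widetilde{\mathcal{Y}})$ is a projective cotorsion pair in $\Mod{\Q,A}$. Here I use that $\Mod{\Q,A}$ is abelian with enough projectives \cite[Proposition 3.12]{HJ21} and enough injectives; the latter is dual, using the functors $G_q$. Remark \ref{rmk:projective model structure} then produces an abelian model structure with cofibrant objects $\mathsf{dg}\mathcal{X}$, trivial objects $\widetilde{\mathcal{Y}}$, and every object fibrant. Concretely, the Hovey triple is $(\mathsf{dg}\mathcal{X},\widetilde{\mathcal{Y}},\Mod{\Q,A})$. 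The two cotorsion pairs required by Hovey's correspondence are
\[
(\mathsf{dg}\mathcal{X}\cap\widetilde{\mathcal{Y}},\ \Mod{\Q,A}) = ({_{\Q,A}\mathsf{Prj}},\ \Mod{\Q,A})
\qquad\text{and}\qquad
(\mathsf{dg}\mathcal{X},\ \widetilde{\mathcal{Y}}).
\]
The first is complete because there are enough projectives. The second is complete by Theorem \ref{thm:complete cotorsion pair}. Thickness of $\widetilde{\mathcal{Y}}$ was established in the proof of Proposition \ref{thm:projective cotorsion pair}.

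\textbf{Step 2: the homotopy category.} By Hovey's theorem \cite{Ho02} (see also \cite{Gil16}), the homotopy category is equivalent to the full subcategory of cofibrant–fibrant objects modulo the relation "$f\sim g$ iff $f-g$ factors through an object of $\mathsf{dg}\mathcal{X}\cap\widetilde{\mathcal{Y}}\cap\Mod{\Q,A}$". Every object is fibrant, so the cofibrant–fibrant objects are exactly $\mathsf{dg}\mathcal{X}$. The second equality in Proposition \ref{thm:projective cotorsion pair} gives
\[
\mathsf{dg}\mathcal{X}\cap\widetilde{\mathcal{Y}}={_{\Q,A}\mathsf{Prj}},
\]
so the homotopy relation is factoring through projectives, and the homotopy category is the stable category $\mathsf{dg}\mathcal{X}/{_{\Q,A}\mathsf{Prj}}$.

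\textbf{Main obstacle.} The only point needing care is the standing hypothesis of Definition \ref{df:projective cotorsion pair} that the ambient abelian category have enough injectives as well as projectives. I would settle this by noting that $G_q(I)$ is injective for $I$ injective, by Lemma \ref{n-iso}. Then for any $X$ the unit of the adjunction gives a monomorphism
\[
X\longrightarrow \prod_q G_q\bigl(E(X(q))\bigr),
\]
where $E(X(q))$ is an injective envelope of $X(q)$. This morphism is monic because evaluating at $q$ and composing with the counit recovers the embedding $X(q)\to E(X(q))$. Everything else is a direct citation of earlier results.
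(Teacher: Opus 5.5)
Your proposal is correct and follows essentially the same route as the paper. Both show that $(\mathsf{dg}\mathcal{X},\widetilde{\mathcal{Y}},\Mod{\Q,A})$ is a Hovey triple, using the thickness of $\widetilde{\mathcal{Y}}$, the completeness of $({_{\Q,A}\mathsf{Prj}},\Mod{\Q,A})$ from enough projectives, and Theorem \ref{thm:complete cotorsion pair}, and then identify the homotopy category with $\mathsf{dg}\mathcal{X}/{_{\Q,A}\mathsf{Prj}}$ via Hovey and Gillespie; your extra check that $\Mod{\Q,A}$ has enough injectives (via $G_q$ applied to injectives) is valid, though Hovey's correspondence itself does not require it.
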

\begin{proof}
We claim that $(\mathsf{dg}\mathcal{X},\widetilde{\mathcal{Y}},\Mod{\Q,A})$ is a Hovey triple in $\Mod{\Q,A}.$
\begin{itemize}
\item The subcategory $\widetilde{\mathcal{Y}}$ is thick by Proposition \ref{thm:projective cotorsion pair}.
\item We know from Proposition \ref{thm:projective cotorsion pair} that $(\mathsf{dg}\mathcal{X}\cap\widetilde{\mathcal{Y}},\Mod{\Q,A})=(_{\Q,A}\mathsf{Prj},\Mod{\Q,A})$.
    Since $\Mod{\Q,A}$ has enough projectives by \cite[Proposition 3.2(a)]{HJ21}, it follows that $(\mathsf{dg}\mathcal{X}\cap\widetilde{\mathcal{Y}},\Mod{\Q,A})=(_{\Q,A}\mathsf{Prj},\Mod{\Q,A})$ is a complete cotorsion pair.
\item Clearly, $(\mathsf{dg}\mathcal{X},\widetilde{\mathcal{Y}}\cap\Mod{\Q,A})=(\mathsf{dg}\mathcal{X},\widetilde{\mathcal{Y}})$. This is a complete cotorsion pair by Theorem \ref{thm:complete cotorsion pair}.
\end{itemize}
Consequently, the desired conclusion follows from \cite[Theorem 2.2]{Ho02} and \cite[Theorem 4.3]{G16}.
\end{proof}

\begin{rmk}
Set $(\mathcal{X},\mathcal{Y})=(_{A}\mathsf{Prj},\Mod{A})$ in Theorem \ref{thm:projective model structure}.
Then $\widetilde{\mathcal{Y}}$ is the class of exact objects, $\mathsf{dg}\mathcal{X}$ is the class of semi-projective objects, and the corresponding homotopy category is the $\Q$-shaped derived category introduced by Holm and J{\o}rgensen in \cite{HJ21}.
\end{rmk}

\section{Model structures on the category of differential modules}
In this section, we will develop a systematic method for constructing abelian model structures on the category of differential modules using the induced cotorsion pairs in Section 2.
We begin by recalling the definition of differential modules.
\begin{bfhpg}[\bf Differential modules]
Recall from \cite{LLAHBF2007} that a differential left $A$-module is a left $A$-module $X$ equipped with an endomorphism $d_{X}:X\to X$, called the differentiation of $X$, satisfying $d_{X}d_{X}=0.$ Sometimes we say this pair $(X,d_{X})$ is a differential left $A$-module.

A morphism from a differential left $A$-module $(X,d_{X})$ to a differential left $A$-module $(Y,d_{Y})$ is a morphism $f:X\rightarrow Y$
such that the following diagram commutes:
$$\xymatrix{
  X \ar[d]_{f} \ar[r]^{d_{X}}& X \ar[d]^{f} \\
  Y \ar[r]^{d_{Y}} & Y  }$$
We denote by $\sf{Dif} A$ the category of all differential left $A$-modules.

\begin{rmk}\label{rm:Jordan}
Consider the Jordan quiver; it has a single vertex $\ast$ and a single loop $\varepsilon$ with the relation $\varepsilon^{2}=0$.
\vspace{-2mm}
\[
\begin{tikzpicture}[baseline=-0.5ex]
\node (v) at (0,0) {$*$};
\draw[->]
(v) to[out=35,in=-35,looseness=7]
node[right] {$\varepsilon$} (v);
\end{tikzpicture}
\]
Let $\Q$ be the path category of the quiver modulo the ideal generated by $(\varepsilon^{2})$. Then $\Mod{\Q,A}$ can be identified with $\Di$.
By \cite[A.2]{HJ23}, $\Q$ satisfies all the assumptions in \ref{HLSR} and the pseudo-radical $\mathfrak{r}=(\varepsilon)$ is nilpotent with $\mathfrak{r}^{2}=0.$
Hence we are in the setting of  Setup \ref{setup}.
The category $\Q$ has only one object, which we denote by $``*"$. In this case, the functor $E_{*}$ and $F_{*}$ from \ref{adjoint triple}
  are given by $$E_{*}(X,d_{X})=X \quad\text{and}\quad F_*(M)=(M\oplus M,\left( \begin{smallmatrix} 0 & 0 \\1 & 0 \\\end{smallmatrix} \right)),$$
 and the functors $C_{*}$ and $K_{*}$ from \ref{adjoint triple2}
act on a differential module $(X,d_{X})$ by taking cokernel and kernel of its differential, respectively, i.e. $$C_{*}(X,d_{X})=\Coker d_{X} \quad \text{and}\quad K_{*}(X,d_{X})=\Ker d_{X}.$$
\end{rmk}

\begin{bfhpg}[\bf Homology]
For every differential left $A$-module $(X,d_{X})$, since $d_{X}d_{X}=0$, we have $\mathrm{Im} d_{X}\subseteq \mathrm{Ker} d_{X}.$
The quotient module
$$\mathrm{H}(X,d_{X})=\Ker d_{X}/\mathrm{Im} d_{X}$$ is the \emph{homology} of $(X,d_{X})$.  By \cite[A.2]{HJ23}, $(X,d_{X})$ is
 exact in the sense of \ref{exact object} if and only if $\mathrm{H}(X,d_{X})=0.$ This is a special case of exact objects. In what follows, we set $$\mathscr{E}=\{(X,d_{X})\in \Di\mid  (X,d_{X})~\text{is exact}\}.$$
 \end{bfhpg}

 Recall from \cite{LLAHBF2007} that the \emph{suspension functor} $\Sigma$ on $\Di$ is defined in the following way: $\Sigma(Y,d_{Y})=(Y,-d_{Y})$ and $\Sigma f=f$.
 It is easy to check that there exists an exact sequence in $\Di$
\begin{equation}\label{se.suspension}
 0\to (Y,d_{Y})\xrightarrow{\left( \begin{smallmatrix} d_{Y} \\1 \\\end{smallmatrix} \right)} (Y\oplus Y,\left( \begin{smallmatrix} 0 & 0 \\1 & 0 \\\end{smallmatrix} \right))\xrightarrow{(1,-d_{Y})} (Y,-d_{Y})\to 0.
\end{equation}

\begin{dfn}
A morphism $f:(X,d_{X})\to (Y,d_{Y})$ between two differential modules is null-homotopical if there exists a morphism $h:X\to Y$ such that $f=d_{Y}h+hd_{X}.$
\end{dfn}

Recall from that \cite{DN} that an object $(X,d_{X})$ in $\Di$ is \emph{contractible} if there exists an endomorphism $X\xrightarrow{h}X$ in $\Mod{A}$ such that the equation $\mathrm{id}_{X}=hd_{X}+d_{X}h$ holds.
 We call $h$ a contractible homotopy of $(X,d_{X}).$
 By \cite[1.5 and Proposition 1.8]{LLAHBF2007} or \cite[Proposition 3,6]{DN}, a differential module $(X,d_{X})$ is contractible if and only if it is isomorphic to $F_{*}(M)=(M\oplus M,\left( \begin{smallmatrix} 0 & 0 \\1 & 0 \\\end{smallmatrix} \right))$ for some $M\in\Mod{A}.$

\begin{lem}\label{lem:contractible object}
A morphism $f:(X,d_{X})\to (Y,d_{Y})$ in $\Di$ is null-homotopical if and only if it factors through a contractible object $(M\oplus M,\left( \begin{smallmatrix} 0 & 0 \\1 & 0 \\\end{smallmatrix} \right))$ for some $M\in\Mod{A}.$
\end{lem}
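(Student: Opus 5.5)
We prove the two implications directly, using the explicit shape of the contractible object $F_*(M)=(M\oplus M,\left(\begin{smallmatrix}0&0\\1&0\end{smallmatrix}\right))$ from Remark \ref{rm:Jordan}. Throughout, we write $D=\left(\begin{smallmatrix}0&0\\1&0\end{smallmatrix}\right)$, so that $D(a,b)=(0,a)$.

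For the ``if'' direction, suppose $f=\beta\alpha$ with $\alpha:(X,d_X)\to F_*(M)$ and $\beta:F_*(M)\to (Y,d_Y)$ morphisms in $\Di$. First note that $F_*(M)$ is contractible with homotopy $s=\left(\begin{smallmatrix}0&1\\0&0\end{smallmatrix}\right)$: indeed $sD+Ds=\left(\begin{smallmatrix}1&0\\0&0\end{smallmatrix}\right)+\left(\begin{smallmatrix}0&0\\0&1\end{smallmatrix}\right)=\mathrm{id}$. Put $h=\beta s\alpha:X\to Y$. Since $\alpha$ and $\beta$ commute with the differentials,
$$d_Yh+hd_X=\beta Ds\alpha+\beta s D\alpha=\beta(Ds+sD)\alpha=\beta\alpha=f.$$
So $f$ is null-homotopical.

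For the ``only if'' direction, suppose $f=d_Yh+hd_X$. We factor $f$ through $F_*(X)=(X\oplus X,D)$. Define $\alpha=\left(\begin{smallmatrix}1\\ d_X\end{smallmatrix}\right):X\to X\oplus X$. Then $D\alpha(x)=(0,x)$, while $\alpha d_X(x)=(d_Xx,d_X^2x)=(d_Xx,0)$. These do not agree, so this $\alpha$ is not a morphism. Instead take $\alpha=\left(\begin{smallmatrix}d_X\\1\end{smallmatrix}\right)$, the map appearing in \eqref{se.suspension}. Then $\alpha d_X(x)=(0,d_Xx)$ and $D\alpha(x)=(0,d_Xx)$, so $\alpha$ is a morphism.

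Next take $\beta=(h,\ d_Yh):X\oplus X\to Y$, i.e.\ $\beta(a,b)=h(a)+d_Yh(b)$. This is a morphism in $\Di$: on one hand $\beta D(a,b)=\beta(0,a)=d_Yh(a)$, and on the other $d_Y\beta(a,b)=d_Yh(a)+d_Y^2h(b)=d_Yh(a)$. Finally,
$$\beta\alpha(x)=h d_X x+d_Yh x=f(x).$$
Hence $f$ factors through the contractible object $F_*(X)$, which is the required $(M\oplus M,D)$ with $M=X$.

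The only delicate point is choosing the correct orientation of the component maps $\alpha$ and $\beta$ so that both commute with $D$. The choice is guided by the exact sequence \eqref{se.suspension}, and once made, every step is a direct verification.
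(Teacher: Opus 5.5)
Your proof is correct and follows the paper's approach. In the ``if'' direction your homotopy $h=\beta s\alpha$ is exactly the paper's $g_0f_1$, written without coordinates via the contracting homotopy $s$ of $F_*(M)$; in the ``only if'' direction you factor through $F_*(X)$ using $\left(\begin{smallmatrix} d_X\\ 1\end{smallmatrix}\right)$ and $(h,\ d_Yh)$, whereas the paper factors through $F_*(Y)$ using $\left(\begin{smallmatrix} hd_X\\ h\end{smallmatrix}\right)$ and $(1,\ d_Y)$, an equally direct check (you can simply delete the abandoned first choice of $\alpha$).
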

\begin{proof}
Assume that $f$ factors through a contractible object $(M\oplus M,\left( \begin{smallmatrix} 0 & 0 \\1 & 0 \\\end{smallmatrix} \right))$ for some $M\in \Mod{A}.$
Then we get the following commutative diagram:
$$
\xymatrix{X\ar[r]^{d_{X}}\ar[d]_{\left( \begin{smallmatrix} f_{0} \\f_{1} \\\end{smallmatrix} \right)}\ar@/_3.6pc/[dd]_{f}&X\ar[d]\ar@/^3.6pc/[dd]^{f}\ar[d]^{\left( \begin{smallmatrix} f_{0} \\f_{1} \\\end{smallmatrix} \right)}\\
M\oplus M\ar[r]^{\left( \begin{smallmatrix} 0 & 0 \\1 & 0 \\\end{smallmatrix} \right)}\ar[d]_{(g_{0},g_{1})}&M\oplus M\ar[d]^{(g_{0},g_{1})}\\
Y\ar[r]^{d_{Y}}&Y
}$$
It implies that $f_{0}=f_{1}d_{X}$ and $g_{1}=d_{Y}g_{0}.$
 Hence
 \begin{align*}
 f=(g_{0},g_{1})\left( \begin{smallmatrix} f_{0} \\f_{1} \\\end{smallmatrix} \right)=g_{0}f_{0}+g_{1}f_{1}=(g_{0}f_{1})d_{X}+d_{Y}(g_{0}f_{1}),
\end{align*}
which implies that $f$ is null-homotopical.

Conversely, assume that there exists a morphism $h:X\to Y$ in $\Mod{A}$ such that $f=d_{Y}h+hd_{X}$.
Then we obtain the following commutative diagram
$$
\xymatrix{X\ar[r]^{d_{X}}\ar[d]_{\left( \begin{smallmatrix} hd_{X} \\h \\\end{smallmatrix} \right)}&X\ar[d]\ar[d]^{\left( \begin{smallmatrix} hd_{X} \\h \\\end{smallmatrix} \right)}\\
Y\oplus Y\ar[r]^{\left( \begin{smallmatrix} 0 & 0 \\1 & 0 \\\end{smallmatrix} \right)}\ar[d]_{(1,d_{Y})}&Y\oplus Y\ar[d]^{(1,d_{Y})}\\
Y\ar[r]^{d_{Y}}&Y.
}$$
It implies that $f$ factors through the contractible object $(Y\oplus Y,\left( \begin{smallmatrix} 0 & 0 \\1 & 0 \\\end{smallmatrix} \right).$
Consequently, we complete the proof of the lemma.
\end{proof}
In what follows, by Lemma \ref{lem:contractible object}, we denote by $\underline{\mathsf{Dif}}(A)$ the stable category modulo null-homotopical morphisms or those morphisms factoring through contractible objects.
\end{bfhpg}

\begin{bfhpg}[\bf Frobenius exact structure on $\Di$]
Let $\mathcal{S}$ be the class of exact sequences
$$0\to (X,d_{X})\to (Y,d_{Y})\to (Z,d_{Z})\to 0$$
in $\Di$ such that the exact sequence
$$0\to X\to Y\to Z\to 0$$
in $\Mod{A}$ splits.
Then it follows from \cite[Remark 4.8]{SV} that $(\Di, \mathcal{S})$ is a Frobenius exact category, and its exact structure is called the objectwise split exact structure.
In the following, for any $(X,d_{X})$ and $(Y,d_{Y})$ in $\Di$, we denote by $\Ext[dw]{1}{(X,d_{X})}{(Y,d_{Y})}$ the Yoneda Ext group associated to the Frobenius exact category $(\Di, \mathcal{S})$.

\begin{rmk}\label{rm:homotopy category}
Consider the Frobenius exact category $(\Di,\mathcal{S})$. Then there exists a Frobenius exact model structure $(\Di,\mathsf{Inj}(\Di,\mathcal{S}),\Di)$ in the sense of \cite[Definition 4.5(3)]{Gi11}, where $\mathsf{Inj}(\Di,\mathcal{S})$ is the subcategory of injective objects in $(\Di,\mathcal{S})$.
By the proof of \cite[Proposition 4.9]{SV}, $\mathsf{Inj}(\Di,\mathcal{S})$ coincides with the class of contractible objects.
It follows from \cite[Corollary 4.8(3)]{Gi11} and Lemma \ref{lem:contractible object} that the homotopy category of the exact model structure coincides with $\underline{\mathsf{Dif}}(A)$.
\end{rmk}

\end{bfhpg}

\begin{lem}\label{homotopic}
Let $(X,d_{X})$ and $(Y,d_{Y})$ be two differential left $A$-modules. Then we have the following isomorphism
$$ \Ext[dw]{1}{(X,d_{X})}{(Y,d_{Y})}\cong \mathrm{Hom}_{\underline{\sf Dif}(A)}((X,d_{X}),(Y,-d_{Y})).$$
\end{lem}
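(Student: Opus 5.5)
The plan is to mimic the classical argument for chain complexes, where degreewise split extensions of $X$ by $Y$ correspond to chain maps $X\to\Sigma Y$ up to homotopy.

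\textbf{Step 1: the shape of a degreewise split extension.} Take an extension in $\mathcal{S}$,
$$0\to (Y,d_Y)\to (E,d_E)\to (X,d_X)\to 0.$$
Since the underlying sequence of $A$-modules splits, we may assume $E=Y\oplus X$ with the canonical inclusion and projection. Commutation of these maps with the differentials forces
$$d_E=\left( \begin{smallmatrix} d_Y & f \\ 0 & d_X \end{smallmatrix} \right)$$
for some $A$-linear map $f:X\to Y$. The condition $d_E^2=0$ is equivalent to $d_Yf+fd_X=0$, that is, $-d_Yf=fd_X$. This says exactly that $f$ is a morphism $(X,d_X)\to (Y,-d_Y)=\Sigma(Y,d_Y)$ in $\Di$.

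\textbf{Step 2: equivalence of extensions versus homotopy.} Two such extensions, given by $f$ and $f'$, are equivalent if and only if there is an isomorphism of the form $\left( \begin{smallmatrix} 1 & h \\ 0 & 1 \end{smallmatrix} \right)$ commuting with $d_E$ and $d_{E'}$. Expanding the commutation relation gives
$$f-f'=d_Yh-hd_X=(-d_Y)(-h)+(-h)d_X.$$
This is precisely the statement that $f-f'$ is null-homotopic as a map into $(Y,-d_Y)$, with homotopy $-h$. By Lemma~\ref{lem:contractible object}, this is the same as $f-f'$ vanishing in $\underline{\mathsf{Dif}}(A)$.

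\textbf{Step 3: the bijection and additivity.} Steps 1 and 2 give a well-defined bijection
$$\Ext[dw]{1}{(X,d_{X})}{(Y,d_{Y})}\to \mathrm{Hom}_{\underline{\sf Dif}(A)}((X,d_{X}),(Y,-d_{Y})),$$
with inverse $f\mapsto$ the twisted direct sum. It remains to check that the bijection is additive. The cleanest route is to compare Baer sums directly: the Baer sum of the extensions given by $f_1$ and $f_2$ is the pullback along the diagonal of $X$, followed by the pushout along the codiagonal of $Y$. Computing this on $Y\oplus X$ with the twisted differentials yields the twist $f_1+f_2$.

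As a cross-check, one could instead use the sequence \eqref{se.suspension}, which lies in $\mathcal{S}$ and whose middle term is contractible, hence injective in the Frobenius category. Applying $\Hom{(X,d_X)}{-}$ to it gives
$$\Ext[dw]{1}{X}{Y}\cong \Coker\bigl(\Hom{X}{F_*(Y)}\to\Hom{X}{\Sigma Y}\bigr),$$
and the cokernel is exactly the maps modulo those factoring through contractibles. This long exact sequence route yields additivity automatically.

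The main obstacle is the sign bookkeeping: making sure the identification lands in $(Y,-d_Y)$ rather than $(Y,d_Y)$, and that the homotopy relation obtained in Step 2 matches the definition of null-homotopical used in Lemma~\ref{lem:contractible object}. I would resolve this by writing the relations out explicitly as above.
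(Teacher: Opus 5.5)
Your proof is correct, but your main route differs from the paper's. The paper argues abstractly. It takes the sequence (\ref{se.suspension}), whose middle term $F_*(Y)$ is contractible and hence injective in the Frobenius category $(\Di,\mathcal{S})$, the latter via the cited description of the injectives of $(\Di,\mathcal{S})$. It then invokes a general result of Gillespie: in a Frobenius category, $\operatorname{Ext}^1$ is the stable Hom into the cokernel of an inflation into an injective. Together with Remark~\ref{rm:homotopy category}, which identifies that stable category with $\underline{\mathsf{Dif}}(A)$, this finishes the proof; it is essentially your ``cross-check''.

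Your Steps 1--3 work instead by direct parametrization of $\mathcal{S}$-extensions. You describe them by twisting maps $f$ with $d_Yf+fd_X=0$. You identify equivalence of extensions with the homotopy relation $f-f'=(-d_Y)(-h)+(-h)d_X$. You get additivity from the Baer sum: the pullback along the diagonal has twist $\binom{f_1}{f_2}$, and the pushout along the codiagonal then has twist $f_1+f_2$. All of these computations are right.

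What your route buys:
\begin{itemize}
\item It is self-contained.
\item It needs only that $(\Di,\mathcal{S})$ is an exact category, not the Frobenius property or the identification of its injectives with contractibles.
\item It makes the sign transparent: the class corresponding to $f$ is the twisted sum $\bigl(Y\oplus X,\left(\begin{smallmatrix} d_Y & f\\ 0 & d_X\end{smallmatrix}\right)\bigr)$.
\end{itemize}
What the paper's route buys is brevity, and additivity and naturality come for free from the long exact sequence, at the cost of leaning on external results.

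One small point in your cross-check. Identifying the cokernel of $\operatorname{Hom}(X,F_*(Y))\to\operatorname{Hom}(X,\Sigma Y)$ with maps modulo those factoring through \emph{any} contractible object requires that contractibles are also projective in $(\Di,\mathcal{S})$. That is what lets every such map lift along the deflation $F_*(Y)\to\Sigma Y$, and it is exactly where the Frobenius property enters the paper's argument.
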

\begin{proof}
Consider the exact sequence (\ref{se.suspension}) in $\Di$
 $$0\to (Y,d_{Y})\xrightarrow{\left( \begin{smallmatrix} d_{Y} \\1 \\\end{smallmatrix} \right)} (Y\oplus Y,\left( \begin{smallmatrix} 0 & 0 \\1 & 0 \\\end{smallmatrix} \right))\xrightarrow{(1,-d_{Y})} (Y,-d_{Y})\to 0.$$
Note that $(Y\oplus Y,\left( \begin{smallmatrix} 0 & 0 \\1 & 0 \\\end{smallmatrix} \right))$ is a contractible object, and so it is an injective object in $(\Di, \mathcal{S})$ by Remark \ref{rm:homotopy category}.
Then this result follows from Remark \ref{rm:homotopy category} and \cite[Theorem 8.18]{Gil24}.
\end{proof}

\begin{nota}\label{nota}
Let $(\mathcal{X}, \mathcal{Y})$ be a cotorsion pair in $\Mod{A}$. The subcategories defined in Definition \ref{df:cotorsion pair} take the following form in $\Di$ by Proposition \ref{prop:KC} and Remark \ref{rm:Jordan}:
$$\quad \ \ \widetilde{\mathcal{X}}=\{(X,d_{X})\in \Di\ |\ (X,d_{X})~\text{is exact and}\ \Coker d_{X}\in \mathcal{X}\},$$
$$\quad \ \ \widetilde{\mathcal{Y}}=\{(Y,d_{Y})\in \Di\ |\ (Y,d_{Y})~\text{is exact and}\ \Ker d_{Y}\in \mathcal{Y}\}.$$
Keep the same notation as in Definition \ref{df:cotorsion pair}, we still let
$\mathsf{dg} \mathcal{X}={^{\bot}\widetilde{\mathcal{Y}}}$ and $\mathsf{dg} \mathcal{Y}=\widetilde{\mathcal{X}}^{\bot}$.
\end{nota}

\begin{rmk}\label{rm:suspension}
Following Notaition \ref{nota}, it is straightforward to check that $\WX$ and $\WY$ are closed under suspensions.
Hence $\DX$ and $\DY$ are closed under suspensions. Indeed, take $(X,d_{X})\in \DX$, it follows from Corollary \ref{cor:dw} that $X\in \X$.
For any $(Y,d_{Y})\in \WY$, one has $Y\in \Y$. Thus there exists the following isomorphisms
\begin{align*}
\Ext[\Q,A]{1}{\Sigma(X,d_{X})}{(Y,d_{Y})}&\cong \Ext[dw]{1}{(X,-d_{X})}{(Y,d_{Y})}\\
&\cong\mathrm{Hom}_{\underline{\sf Dif}(A)}((X,-d_{X}),(Y,-d_{Y}))\\
&\cong\mathrm{Hom}_{\underline{\sf Dif}(A)}((X,d_{X}),(Y,d_{Y}))\\
&\cong\Ext[dw]{1}{(X,d_{X})}{(Y,-d_{Y})}\\
&\cong\Ext[\Q,A]{1}{(X,d_{X})}{(Y,-d_{Y})}\\
&\cong 0,
\end{align*}
where the first isomorphism follows from the definition of $\Sigma$ and the fact that $X\in\X$ and $Y\in \Y,$  the second and the forth isomorphisms hold by Lemma \ref{homotopic}.
Consequently, $\Sigma(X,d_{X})=(X,-d_{X})\in \DX.$
\end{rmk}

\begin{lem}\label{lem:orthogonal}
Let $(\mathcal{X}, \mathcal{Y})$ be a cotorsion pair in $\Mod{A}$.
Then $$\Ext[\Di]{1}{(X,d_{X})}{(Y,d_{Y})}=0$$ for any $(X,d_{X})\in \widetilde{\mathcal{X}}$ and $(Y,d_{Y})\in \widetilde{\mathcal{Y}}$.
\end{lem}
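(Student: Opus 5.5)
Take $(X,d_X)\in\WX$ and $(Y,d_Y)\in\WY$. By Notation \ref{nota}, both are exact, $\Coker d_X\in\X$ and $\Ker d_Y\in\Y$. The plan is to reduce $\Ext[\Di]{1}{(X,d_X)}{(Y,d_Y)}$ to homotopy classes via Lemma \ref{homotopic}, and then show every such morphism is null-homotopic.

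\textbf{Step 1.} First check that $X\in\X$ and $Y\in\Y$. Since $X$ is exact, $\Ker d_X=\mathrm{Im}\, d_X$, so there is a short exact sequence $0\to \mathrm{Im}\, d_X\to X\to \Coker d_X\to 0$ together with $\mathrm{Im}\, d_X\cong X/\Ker d_X=\Coker d_X$. Hence $X$ is an extension of $\Coker d_X$ by itself. As $\X$ is closed under extensions, $X\in\X$. Dually, $\Coker d_Y\cong \mathrm{Im}\, d_Y=\Ker d_Y\in\Y$, and $Y$ is an extension of $\Ker d_Y$ by $\Ker d_Y$, so $Y\in\Y$. (This is also Proposition \ref{prop:KC}.)

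\textbf{Step 2.} Since $\Ext[A]{1}{X}{Y}=0$, every short exact sequence $0\to Y\to Z\to X\to 0$ in $\Di$ splits in $\Mod{A}$. Thus $\Ext[\Di]{1}{(X,d_X)}{(Y,d_Y)}=\Ext[dw]{1}{(X,d_X)}{(Y,d_Y)}$. By Lemma \ref{homotopic}, this equals $\mathrm{Hom}_{\underline{\sf Dif}(A)}((X,d_X),(Y,-d_Y))$. The object $(Y,-d_Y)$ is again exact with the same kernel, so it lies in $\WY$. It therefore suffices to show that every morphism $f\colon(X,d_X)\to(Y,d_Y)$ with $X\in\WX$, $Y\in\WY$ is null-homotopic.

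\textbf{Step 3 (main obstacle).} Construct $h\colon X\to Y$ with $f=d_Yh+hd_X$. Write $C=\Coker d_X\in\X$ and $K=\Ker d_Y\in\Y$, and let $\pi\colon X\to C$ and $\iota\colon K\to Y$ be the canonical maps. Since $\Ext[A]{1}{C}{K}=0$, the sequence $0\to \Ker d_X\to X\xrightarrow{\pi} C\to0$ gives $\Hom(X,K)\to\Hom(\Ker d_X,K)\to 0$, and dually on the $Y$ side. The intended construction has three parts.
\begin{itemize}
\item First, the restriction $f|_{\Ker d_X}$ lands in $\Ker d_Y=K$. Since $d_X$ induces $C\cong \mathrm{Im}\, d_X=\Ker d_X$, we have $f|_{\Ker d_X}=g\circ\bar d_X$, where $\bar d_X\colon C\xrightarrow{\sim}\Ker d_X$ and $g\colon\Ker d_X\to K$.
\item Second, lift using $d_Y\colon Y\to K$, which is surjective because $\mathrm{Im}\, d_Y=K$. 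Its kernel is $K\in\Y$ and $C\in\X$, so the map $C\to K$ given by $g\bar d_X$ lifts to $s\colon C\to Y$ with $d_Ys=g\bar d_X$. Set $h_0=s\pi$. Then $f-d_Yh_0$ vanishes on $\Ker d_X$, and one expects it to factor as $f-d_Yh_0=t\,d_X$ for some $t\colon X\to Y$, using $X/\Ker d_X\cong \mathrm{Im}\, d_X$ and extending from $\mathrm{Im}\, d_X=\Ker d_X$ to $X$ via $\Ext[A]{1}{C}{Y}=0$. This also uses $Y\in\Y$ from Step 1.
\item Third, correct $t$ so that the total homotopy is consistent, and set $h=h_0+t'$ with $d_Yh+hd_X=f$.
\end{itemize}
The bookkeeping in this last correction is the delicate point. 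If it becomes messy, the alternative is to mimic Gillespie's chain-complex argument by unrolling $(X,d_X)$ into the periodic exact complex $\cdots\to X\xrightarrow{d_X}X\to\cdots$ with cycles in $\X$, doing the same for $Y$ with cycles in $\Y$, and applying his lemma that chain maps from such complexes are null-homotopic. One then needs to check that a periodic chain map admits a periodic homotopy, which is again the crux.
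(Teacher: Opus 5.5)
Your Steps 1 and 2 are correct and match the paper's reduction. You even make explicit that $(Y,-d_Y)\in\WY$, which the paper uses tacitly. The gap is in Step 3, which carries the entire content of the lemma: it contains a false claim, and the final correction is never carried out. Since $\pi\colon X\to\Coker{d_X}$ kills $\mathrm{Im}\,d_X=\Ker{d_X}$, your $h_0=s\pi$ vanishes on $\Ker{d_X}$. Hence $(f-d_Yh_0)|_{\Ker{d_X}}=f|_{\Ker{d_X}}$, which is nonzero in general; take $f=\mathrm{id}$ on $F_*(P)$ with $P\neq 0$ projective and $(\X,\Y)=({}_A\mathsf{Prj},\Mod{A})$. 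In fact $d_Yh_0=fd_X$ and $h_0d_X=0$, so $h_0$ is a null-homotopy of the morphism $fd_X$, not a step toward one for $f$. Consequently the factorization $f-d_Yh_0=t\,d_X$ does not exist, and the ``third correction'' is left unspecified. The Gillespie fallback likewise leaves open exactly the point you flag, namely that the homotopy can be chosen periodic.

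The missing idea is the order of the two lifting steps. Let $\widetilde{d_X}\colon X\to\Ker{d_X}$ and $\widetilde{d_Y}\colon Y\to\Ker{d_Y}$ be the corestrictions, and $i_X\colon\Ker{d_X}\to X$ the inclusion. First lift $\widehat f=f|_{\Ker{d_X}}\colon\Ker{d_X}\to\Ker{d_Y}$ through the epimorphism $\widetilde{d_Y}$ to some $\alpha\colon\Ker{d_X}\to Y$; this uses $\Ext[A]{1}{\Ker{d_X}}{\Ker{d_Y}}=0$, since $\Ker{d_X}\cong\Coker{d_X}\in\X$. Then \emph{extend} $\alpha$ along $i_X$ to $\beta\colon X\to Y$, using $\Ext[A]{1}{X/\Ker{d_X}}{Y}=0$. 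Now $g=f-(d_Y\beta+\beta d_X)$ is a morphism of differential modules with $gd_X=fd_X-d_Y\alpha\widetilde{d_X}=0$. Because $g$ is a morphism, $d_Yg=gd_X=0$ follows automatically, and this removes the delicate bookkeeping you were worried about. So $g$ factors as $X\xrightarrow{\widetilde{d_X}}\Ker{d_X}\xrightarrow{\widehat g}\Ker{d_Y}\hookrightarrow Y$. Your $s\pi$ device, applied to $\widehat g$ instead of to $f$, gives $\delta\colon\Ker{d_X}\to Y$ with $\widetilde{d_Y}\delta=\widehat g$. Then $g=d_Y(\delta\widetilde{d_X})+(\delta\widetilde{d_X})d_X$, and $h=\beta+\delta\widetilde{d_X}$ satisfies $f=d_Yh+hd_X$; this is precisely the paper's argument.
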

\begin{proof}
 Suppose that $(X,d_{X})\in \widetilde{\mathcal{X}}$ and $(Y,d_{Y})\in \widetilde{\mathcal{Y}}$. Then it follows from Notation \ref{nota} that
there exist two exact sequences in $\Mod{A}$
$$0\to \Coker d_{X}\to X\to \Coker d_{X}\to 0$$
and
$$0\to \Ker d_{Y}\to Y\to \Ker d_{Y}\to 0$$
with $\Coker d_{X}\in \mathcal{X}$ and $\Ker d_{Y}\in \mathcal{Y}.$
Note that $\mathcal{X}$ and $\mathcal{Y}$ are closed under extensions as $(\mathcal{X},\mathcal{Y})$ is a cotorsion pair.
Hence $X\in \mathcal{X}$ and $Y\in \mathcal{Y}.$
It implies that $$\Ext[\Di]{1}{(X,d_{X})}{(Y,d_{Y})}=\Ext[dw]{1}{(X,d_{X})}{(Y,d_{Y})}.$$
By Lemma \ref{homotopic}, it suffices to prove that $$\Hom[\underline{\sf Dif}(A)]{(X,d_{X})}{(Y,d_{Y})}=0$$ for any $(X,d_{X})\in\WX$ and $(Y,d_{Y})\in\WY.$
Let $f:(X,d_{X})\to (Y,d_{Y})$ be a morphism in $\Di.$
Then we have the following commutative diagram
$$
\xymatrix{X\ar[rr]^{d_{X}}\ar[rd]^{\widetilde{d_{X}}}\ar[dd]_{f}& &X\ar[dd]^{f}\\
&\Ker d_{X}\ar[ur]^{i_{X}}\ar[dd]^(.3){\widehat{f}}&\\
Y\ar@{.>}[rr]^(.4){d_{Y}}\ar[rd]_{\widetilde{d_{Y}}}& & Y\\
&\Ker d_{Y}\ar[ur]_{i_{Y}}&
}$$
where $\widehat{f}:\Ker d_{X}\to \Ker d_{Y}$ is the restriction of $f:X\to Y.$
Consider the exact sequence
$$0\to \Ker d_{Y}\xrightarrow{i_{Y}} Y\xrightarrow{\widetilde{d_{Y}}} \Ker d_{Y}\to 0.$$
Since $(X,d_{X})\in \WX$ and $(Y,d_{Y})\in \WY$, by Notation \ref{nota}, one has $\Ker d_{X}=\Coker d_{X}\in \X$ and $\Ker{d_{Y}}\in \Y$.
It follows that $$\Ext[A]{1}{\Ker{d_{X}}}{\Ker{d_{Y}}}=0.$$
Thus we get the exact sequence
$$0\to \Hom[A]{\Ker d_{X}}{\Ker d_{Y}}\to \Hom[A]{\Ker d_{X}}{Y}\to \Hom[A]{\Ker d_{X}}{\Ker d_{Y}}\to 0.$$
It follows that there exists a morphism $\alpha\in \Hom[A]{\Ker{d_{X}}}{Y}$ with $\widehat{f}=\widetilde{d_{Y}}\alpha.$
Note that $$0\to \Ker d_{X}\xrightarrow{i_{X}} X\xrightarrow{\widetilde{d_{X}}} \Ker d_{X}\to 0$$ is an exact sequence as $(X,d_{X})$ is exact.
Since $\Ker{d_{X}}\in \mathcal{X}$ and $Y\in \Y$, one has $\Ext[A]{1}{\Ker{d_{X}}}{Y}=0$.
Then we get the exact sequence
$$0\to \Hom[A]{\Ker d_{X}}{Y}\xrightarrow{\widetilde{d_{X}}^{*}}\Hom[A]{X}{Y}\xrightarrow{i_{X}^{*}}\Hom[A]{\Ker d_{X}}{Y}\to 0.$$
So there exists a morphism $\beta\in \Hom[A]{X}{Y}$ with $\beta i_{X}=\alpha.$
Now set $g=f-(d_{Y}\beta+\beta d_{X})$.
We show that $g$ is a morphism between $(X,d_{X})$ and $(Y,d_{Y})$.
Indeed,
\begin{align*}
gd_{X}&=[f-(d_{Y}\beta+\beta d_{X})]d_{X}\\
&=fd_{X}-d_{Y}\beta d_{X}\\
&=fd_{X}-d_{Y}\beta i_{X}\widetilde{d_{X}}\\
&=fd_{X}-i_{Y}\widetilde{d_{Y}}\alpha\widetilde{d_{X}}\\
&=fd_{X}-i_{Y}\widehat{f}\widetilde{d_{X}}\\
&=fd_{X}-fi_{X}\widetilde{d_{X}}\\
&=0.
\end{align*}
By a similar argument,  $d_{Y}g=0=gd_{X}$.
Hence $g$ is a morphism in $\Di$, and $f-g=d_{Y}\beta+\beta d_{X}.$
We mention that $\mathrm{Im} g\subseteq\Ker {d_{Y}}$ and $\mathrm{Im} d_{X}\subseteq \Ker {g}.$
Thus we have the following commutative diagram
$$
\xymatrix{X\ar[d]^{\widetilde{d_{X}}}\ar@{=}[r]&X\ar[d]^{\pi}\ar@{=}[r]&X\ar[d]^{g}\\
\Ker d_{X}&X/\Ker d_{X}\ar[l]^{\overline{d_{X}}}_{\cong}\ar[r]^{\overline{g}}&\Ker d_{Y}.
}
$$
Set $\widehat{g}=\overline{g}\overline{d_{X}}^{-1},$ and then
$\widehat{g}\widetilde{d_{X}}=g.$
Now using the argument as previously to obtain the morphism $\alpha$, there exists a morphism $\delta:\Ker d_{X}\to Y$ with $\widetilde{d_{Y}}\delta=\widehat{g}.$
Hence $$d_{Y}(\delta d_{X})+(\delta d_{X})d_{X}=d_{Y}\delta d_{X}=i_{Y}\widetilde{d_{Y}}\delta d_{X}=i_{Y}\widehat{g}d_{X}=g.$$
It follows that
\begin{align*}
f&=d_{Y}\beta+\beta d_{X}+d_{Y}(\delta d_{X})+(\delta d_{X})d_{X}\\
&=d_{Y}(\beta+\delta d_{X})+(\beta+\delta d_{X})d_{X}.
\end{align*}
Consequently, $f$ is null-homotopical.
\end{proof}

Recall from \cite{DN} that an object $(I,d_{I})$ in $\Di$ is semi-injective if for each exact object $(E,d_{E})$ in $\Di$, we have $\Ext[\Di]{1}{(E,d_{E})}{(I,d_{I})}=0.$ This is a special case of the semi-injecive objects defined in \ref{exact object}.

\begin{prp}\label{prop:complete cotorsion pair}
Assume that $(\mathcal{X},\mathcal{Y})$ is a complete cotorsion pair in $\Mod{A}.$
Then  $(\widetilde{\mathcal{X}},\mathsf{dg}\mathcal{Y})$ and $(\mathsf{dg}\mathcal{X},\widetilde{\mathcal{Y}})$ are complete  cotorsion pairs in $\Di.$
\end{prp}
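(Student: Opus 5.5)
The plan is to prove completeness by producing, for every differential module $(M,d_M)$, one of the two kinds of approximation for each pair, and then obtaining the other kind by Salce's lemma. By Proposition~\ref{prop:cotorsion pair} both pairs are already cotorsion pairs. Salce's lemma in $\Di$ only needs enough projectives and enough injectives, which $\Di=\Mod{\Q,A}$ has. So it suffices to show: every object has a special $\widetilde{\mathcal{X}}$-precover, and every object has a special $\widetilde{\mathcal{Y}}$-preenvelope. I treat $(\widetilde{\mathcal{X}},\mathsf{dg}\mathcal{Y})$ in detail; the other pair is dual, using $G_*$, $K_*$ and the $\widetilde{\mathcal{Y}}$-description in Notation~\ref{nota} in place of $F_*$, $C_*$ and the $\widetilde{\mathcal{X}}$-description.

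\emph{Step 1 (an $\widetilde{\mathcal{X}}$-epimorphism onto $M$).} Completeness of $(\mathcal{X},\mathcal{Y})$ gives $0\to Y_0\to X_0\to M\to 0$ in $\Mod{A}$ with $X_0\in\mathcal{X}$ and $Y_0\in\mathcal{Y}$. Through the adjunction $(F_*,E_*)$ this yields a map $F_*(X_0)\to(M,d_M)$ in $\Di$, surjective because $X_0\to M$ is. The object $F_*(X_0)=(X_0\oplus X_0,\left(\begin{smallmatrix}0&0\\1&0\end{smallmatrix}\right))$ is contractible, hence exact, and its differential has cokernel $X_0\in\mathcal{X}$. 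So $F_*(X_0)\in\widetilde{\mathcal{X}}$ (Lemma~\ref{lem:contain contractible objects}(c)). Let $(K,d_K)$ be the kernel; evaluating at $*$ shows $K$ is an extension of $Y_0$ by $X_0$.

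\emph{Step 2 (correcting the kernel).} The kernel $(K,d_K)$ need not lie in $\mathsf{dg}\mathcal{Y}$. I would take a special preenvelope $0\to(K,d_K)\to(D,d_D)\to(X',d_{X'})\to 0$ with $D\in\mathsf{dg}\mathcal{Y}$ and $X'\in\widetilde{\mathcal{X}}$, and push out along $K\to F_*(X_0)$. The pushout $P$ is an extension of $F_*(X_0)$ by $X'$, so $P\in\widetilde{\mathcal{X}}$ since $\widetilde{\mathcal{X}}$ is extension closed. The induced map $P\to M$ is surjective with kernel $D$, which is the required special precover. Thus everything reduces to constructing special $\mathsf{dg}\mathcal{Y}$-preenvelopes, and for those I would mimic Gillespie's chain-complex argument:
\begin{itemize}
\item embed $K$ into the exact object $G_*(Y_1)\in\mathsf{dg}\mathcal{Y}$, where $K\hookrightarrow Y_1$ comes from completeness of $(\mathcal{X},\mathcal{Y})$;
\item control the cokernel using the exact sequences $0\to\Ker d\to N\to\Ker d\to 0$ that characterise exact objects, together with the identifications $\Ker d=\Coker d$ for exact objects (Remark~\ref{rm:Jordan}).
\end{itemize}
The ingredients for recognising membership in $\widetilde{\mathcal{X}}$, $\mathsf{dg}\mathcal{Y}$ are Lemma~\ref{homotopic} (translating $\Ext[dw]{1}{-}{-}$ into stable Hom), closure under $\Sigma$ (Remark~\ref{rm:suspension}), and the homotopy-lifting technique of Lemma~\ref{lem:orthogonal}.

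\emph{Main obstacle.} The hardest step is showing that the cokernel of the constructed embedding lies in $\widetilde{\mathcal{X}}$, i.e.\ that it is exact with $\Coker d\in\mathcal{X}$. Unlike for chain complexes, there is no grading to induct on, since $\Q$ has a cycle, so Theorem~\ref{thm:complete cotorsion pair} is unavailable. My first attempt is to build the embedding of $K$ in two stages. First embed $\Ker d_K$ and $\Coker d_K$ separately via $\mathcal{Y}$-envelopes in $\Mod{A}$. Then glue these into a differential module $D$ via $G_*$ and a pushout, chosen so that the cokernel $D/K$ has homology zero and the cokernel of its differential is an extension of two $\mathcal{X}$-modules. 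After that, membership $D\in\mathsf{dg}\mathcal{Y}$ would be checked through Lemma~\ref{homotopic}, by showing that every map from a $\widetilde{\mathcal{X}}$-object into $\Sigma D$ is null-homotopic. That check is done as in Lemma~\ref{lem:orthogonal}, using $X\in\mathcal{X}$ and $D\in\mathcal{Y}$ degreewise to lift maps through $\Ker d$.
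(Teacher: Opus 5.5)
There is a genuine gap: the construction that carries the content of the proposition is never given. Your Steps 1 and 2 are correct but circular. They turn a special $\WX$-precover of $M$ into a special $\DY$-preenvelope of $K$. That is just the other half of completeness of the same pair, i.e.\ exactly what Salce's lemma already trades. Moreover $K$ is no simpler than $M$: since $F_*(X_0)$ is exact, $\mathrm{H}(K)\cong\mathrm{H}(M)$.

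So everything rests on your ``first attempt'', and that sketch does not work as stated. In any sequence $0\to K\to D\to X'\to 0$ with $X'\in\WX\subseteq\mathscr{E}$ one has $\mathrm{H}(D)\cong\mathrm{H}(K)$, so $D$ is in general not exact. The argument of Lemma~\ref{lem:orthogonal} uses that the \emph{target} is exact with $\Ker d\in\Y$, so it gives you no way to verify $D\in\DY$. Already for $\Y$ the class of injective modules (and $\X=\Mod{A}$) one has $\WX=\mathscr{E}$ and $\DY=\mathscr{E}^{\bot}$. Then you would have to produce a semi-injective replacement of an arbitrary $K$. For $K=(N,0)$ a natural one is the folded product of an entire injective resolution of $N$ (compare $T(Y)$ in the proof of Proposition~\ref{prop:com}). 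Gluing $\Y$-envelopes of $\Ker d_K$ and $\Coker d_K$ only captures the first step of such a resolution, and you give no argument that the result lies in $\DY$.

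The missing idea is to let the Holm--J{\o}rgensen theory absorb the homology.
\begin{itemize}
\item By \cite[Theorem~5.9]{HJ21} there is an exact sequence $0\to J\to E\to M\to 0$ with $E$ exact and $J$ semi-injective. Then $J\in\DY$ automatically, because $\WX\subseteq\mathscr{E}$.
\item The pair $(\X,\Y)$ is only needed to approximate the exact object $E$. Take a special $\X$-precover $0\to Y\to X\to\Ker d_E\to 0$ and combine it, via the horseshoe lemma, with $0\to\Ker d_E\to E\to\Ker d_E\to 0$. This gives $0\to Y'\to X'\to E\to 0$ in $\Di$ with $d_{X'}=i_{X'}\widetilde{d_{X'}}$ and $d_{Y'}=i_{Y'}\widetilde{d_{Y'}}$. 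Hence $X'\in\WX$ and $Y'\in\WY\subseteq\DY$.
\item Pulling back along $J\to E$ yields $0\to K\to X'\to M\to 0$ together with $0\to Y'\to K\to J\to 0$. Thus $K\in\DY$, since $\DY$ is closed under extensions.
\end{itemize}
Salce's lemma then gives completeness of $(\WX,\DY)$, and the other pair is dual.
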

\begin{proof}
We already know that $(\widetilde{\mathcal{X}},\mathsf{dg}\mathcal{Y})$ and $(\mathsf{dg}\mathcal{X},\widetilde{\mathcal{Y}})$ are  cotorsion pairs in $\Di$ by Proposition \ref{prop:cotorsion pair}.
We need only to show the cotorsion pair $(\widetilde{\mathcal{X}},\mathsf{dg}\mathcal{Y})$ is complete,
because the
completeness of the cotorsion pair $(\mathsf{dg}\mathcal{X},\widetilde{\mathcal{Y}})$ can be proved dually.
Take $(M,d_{M})\in \Di$.
By \cite[Theorem 5.9]{HJ21}, there exists an exact sequence
$$0\to (J,d_{J})\to (E,d_{E})\to (M,d_{M})\to 0$$
with $(E,d_{E})$ exact and $(J,d_{J})$ semi-injective in $\Di.$
Since $(\mathcal{X},\mathcal{Y})$ is complete, we have an exact sequence
$$0\to Y\to X\to \Ker d_{E}\to 0$$
with $X\in \mathcal{X}$ and $Y\in\mathcal{Y}.$
We mention that $$0\to \Ker d_{E}\to E\to \Ker d_{E}\to 0$$
is exact as $(E,d_{E})$ is exact.
By Generalized Horseshoe Lemma (see \cite[Lemma 2.18]{Gil24}), we have the following exact commutative diagram
$$\xymatrix{&  0\ar[d]& 0\ar[d]&0\ar[d]&\\
 0\ar[r]&Y\ar[r]^{i_{Y'}}\ar[d]_{}&Y'\ar[r]^{\widetilde{d_{Y'}}}\ar[d]_{}&Y\ar[r]\ar[d]_{}&0\\
 0\ar[r]&X\ar[r]^{i_{X'}}\ar[d]&X'\ar[r]^{\widetilde{d_{X'}}}\ar[d]&X\ar[r]\ar[d]&0\\
 0\ar[r]&\Ker d_{E}\ar[r]^{i_{E}}\ar[d]&E\ar[r]^{\widetilde{d_{E}}}\ar[d]&\Ker d_{E}\ar[r]\ar[d]&0\\
 &0&0&0&}$$
 with $X'\in \mathcal{X}$ and $Y'\in \mathcal{Y}.$
 We set $d_{X'}=i_{X'}\widetilde{d_{X'}}$ and $d_{Y'}=i_{Y'}\widetilde{d_{Y'}},$
 and then $(X',d_{X'})$ and $(Y',d_{Y'})$ are in $\Di.$
 The above commutative diagram implies that there is an exact sequence in $\Di$
 $$0\to (Y',d_{Y'})\to (X',d_{X'})\to (E,d_{E})\to 0$$
 with $(X',d_{X'})\in \widetilde{\mathcal{X}}$ and $(Y',d_{Y'})\in \widetilde{\mathcal{Y}}.$
 Consider the following pullback diagram in $\Di:$
 $$\xymatrix{&0\ar[d]&0\ar[d]&\\
&(Y',d_{Y'})\ar[d]\ar@{=}[r]&(Y',d_{Y'})\ar[d]&\\
0\ar[r]&(K,d_{K})\ar[r]\ar[d]&(X',d_{X'})\ar[r]\ar[d]&(M,d_{M})\ar[r]\ar@{=}[d]&0\\
    0\ar[r]&(J,d_{J})\ar[r]\ar[d]&(E,d_{E})\ar[d]\ar[r]&(M,d_{M}) \ar[r]&0\\
    &0&0&\\}$$
Since every object in $\widetilde{\mathcal{X}}$ is exact, it follows  that $(J,d_{J})$ is in $\mathsf{dg}\mathcal{Y}$.
Since $\widetilde{\mathcal{Y}}\subseteq\mathsf{dg}\mathcal{Y}$ by Lemma \ref{lem:orthogonal}, one has $(Y',d_{Y'})\in \mathsf{dg}\mathcal{Y}.$
Note that $\mathsf{dg}\mathcal{Y}$ is closed under extensions. It follows from the first column of above pullback diagram that $(K,d_{K})\in \mathsf{dg}\mathcal{Y}$, proving that the cotorsion pair $(\widetilde{\mathcal{X}},\mathsf{dg}\mathcal{Y})$ has enough projectives.
Since $\Di$ has enough projectives and injectives, it follows from
\cite[Proposition 7.1.7]{rha} that $(\widetilde{\mathcal{X}},\mathsf{dg}\mathcal{Y})$ is complete.
\end{proof}

\begin{prp}\label{prop:com}
Let $(\mathcal X,\mathcal Y)$ be a hereditary cotorsion pair in $\Mod{A}$.
Then one has $$
\mathsf{dg}\mathcal{X}\cap \mathscr{E}
=\widetilde{\mathcal X}.$$
\end{prp}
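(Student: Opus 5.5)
We prove the two inclusions separately, working in $\Di$ with the description of $\WX$ and $\WY$ from Notation \ref{nota}.

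\emph{Inclusion $\WX\subseteq \DX\cap\mathscr{E}$.} Every object of $\WX$ is exact by Proposition \ref{prop:KC}. Lemma \ref{lem:orthogonal} gives $\Ext[\Di]{1}{\WX}{\WY}=0$. Hence $\WX\subseteq {^{\bot}\WY}=\DX$, and this inclusion is done.

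\emph{Inclusion $\DX\cap\mathscr{E}\subseteq\WX$.} Let $(X,d_X)\in\DX$ be exact. Corollary \ref{cor:dw} gives $X\in\X$. By Notation \ref{nota}, it remains to show $\Coker d_X\in\X$. Since $(X,d_X)$ is exact, there is a short exact sequence
$$0\to \Ker d_X\to X\to \Coker d_X\to 0,$$
with $\Ker d_X=\mathrm{Im}\, d_X\cong \Coker d_X$ (as $X/\Ker d_X\cong \mathrm{Im}\, d_X=\Ker d_X$). We then test $\Coker d_X$ against $\Y$. For $M\in\Y$, the stalk object $S_*(M)=(M,0)$ lies in $\WY$ only when $M$ is suitable, so instead we use Lemma \ref{lem:exact iso}(b): since $(X,d_X)$ is exact,
$$\Ext[A]{1}{\Coker d_X}{M}\cong\Ext[\Di]{1}{(X,d_X)}{S_*(M)}.$$
So it suffices to have $\Ext[\Di]{1}{(X,d_X)}{S_*(M)}=0$ for $M\in\Y$.

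\emph{Main obstacle.} The difficulty is that $S_*(M)$ is not exact, so it is not in $\WY$, and orthogonality to $\DX$ does not apply directly. The fix is to resolve $S_*(M)$ by objects of $\WY$ and use heredity. We have the sequence \eqref{se.suspension}, specialised to $d=0$:
$$0\to (M,0)\to F_*(M)\to (M,0)\to 0.$$
The middle term $F_*(M)$ is in $\WY$ (its kernel is $M\in\Y$, and it is exact), but the sequence gives no dimension shift on its own. Instead, take $I$ injective with $0\to M\to I\to M'\to 0$. Then $M'\in\Y$ because the pair is hereditary. Applying $S_*$ gives
$$0\to S_*(M)\to S_*(I)\to S_*(M')\to 0,$$
so
$$\Ext[\Di]{1}{X}{S_*(M)}\cong \Coker\bigl(\Hom{X}{S_*(I)}\to\Hom{X}{S_*(M')}\bigr)$$
modulo $\Ext^1(X,S_*(I))$. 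The last group is $\Hom[A]{\mathbb{H}(X)}{I}=0$ by Lemma \ref{lem:exact object2}, since $X$ is exact.

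This route seems circular, so the step I would actually try first is more direct. Via Lemma \ref{lem:exact iso}(b) we need, equivalently, $\Ext[A]{1}{\Coker d_X}{M}=0$. Compute $\Ext^1_{\Di}$ by the objectwise split structure: $X\in\X$ and $F_*(M)$ has underlying module in $\Y$, so Lemma \ref{homotopic} applies to relevant pairs. Then use that $\Ext^2_A(\X,\Y)=0$ by heredity, together with the long exact sequence of $0\to\Ker d_X\to X\to\Coker d_X\to0$. This gives
$$\Ext[A]{1}{\Coker d_X}{M}\cong \Ext[A]{2}{\Coker d_X}{M'}$$
via dimension shifting, and iterating connects back to $\Ext^1$ against $\Coker d_X\cong\Ker d_X$.

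A cleaner alternative, which I expect the authors use, is this. Take a special $\X$-precover $0\to Y\to C\to \Coker d_X\to 0$ with $C\in\X$, $Y\in\Y$ (available if the pair is complete; otherwise use the test directly). Build from it, via the Horseshoe lemma as in Proposition \ref{prop:complete cotorsion pair}, a sequence $0\to(Y',d)\to(X'',d)\to(X,d_X)\to0$ in $\Di$ with $(Y',d)\in\WY$. Since $(X,d_X)\in\DX$ this splits, so $(X,d_X)$ is a summand of an object of $\WX$, hence $\Coker d_X\in\X$. The obstacle is making this step work without assuming completeness.
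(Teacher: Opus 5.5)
\textbf{There is a genuine gap in the reverse inclusion.} Your first inclusion $\WX\subseteq\DX\cap\mathscr{E}$ is fine and matches the paper. The reverse inclusion, however, is not established under the stated hypothesis, which assumes only that $(\X,\Y)$ is hereditary.

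\begin{itemize}
\item Your third route is correct only when $(\X,\Y)$ is also complete. That route takes a special $\X$-precover of $\Coker{d_X}$, applies the horseshoe construction of Proposition \ref{prop:complete cotorsion pair}, and splits the resulting sequence because $(X,d_X)\in\DX$, so that $\Coker{d_X}$ is a summand of $C\in\X$. Completeness would suffice for Theorem \ref{thm:main result in D}, but not for the proposition as stated. The phrase ``otherwise use the test directly'' does not supply the missing argument.
\item Your first route is genuinely circular. Since $\Hom[\Di]{(X,d_X)}{S_*(N)}\cong\Hom[A]{\Coker{d_X}}{N}$, the cokernel you compute is just $\Ext[A]{1}{\Coker{d_X}}{M}$ again.
\item Your second route cannot close, because as written it uses nothing about $(X,d_X)\in\DX$ except $X\in\X$. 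With $K=\Ker{d_X}\cong\Coker{d_X}$, dimension shifting only yields $\Ext[A]{1}{K}{M}\cong\Ext[A]{2}{K}{M}\cong\cdots$. These hypotheses really are insufficient. Take $A=\kk[x]/(x^2)$ over a field $\kk$ and the pair $({}_A\mathsf{Prj},\Mod{A})$. The module $A$ with differential given by multiplication by $x$ is exact and $A$ is projective, yet its cokernel $A/(x)$ is not projective.
\end{itemize}

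\textbf{What is missing} is an object of $\WY$, built without completeness, against which $(X,d_X)$ can be tested so as to detect $\Ext[A]{1}{\Ker{d_X}}{M}$ for $M\in\Y$.

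The paper folds an injective resolution $0\to M\to I^0\to I^1\to\cdots$ into the differential module $T(M)=M\times\prod_{n\geq 0}I^n$ with the shift differential. This module is exact. Its kernel is the product of the cosyzygies, which lie in $\Y$ by heredity, and $\Y$ is closed under products. Hence $T(M)\in\WY$.

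Write $i_X\colon\Ker{d_X}\to X$ for the inclusion and $\widetilde{d_X}\colon X\to\Ker{d_X}$ for the corestriction of $d_X$. Given $f\colon\Ker{d_X}\to M$, one builds inductively a morphism $F\colon(X,d_X)\to T(M)$ whose $M$-component is $f\widetilde{d_X}$. The construction uses injectivity of the $I^n$ and exactness of $(X,d_X)$.

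Since $\Sigma T(M)\in\WY$ and $(X,d_X)\in\DX$, Lemma \ref{homotopic} makes $F$ null-homotopic. The $M$-component $H_{-1}$ of the homotopy then satisfies $f\widetilde{d_X}=H_{-1}d_X=H_{-1}i_X\widetilde{d_X}$, hence $f=H_{-1}i_X$. So restriction $\Hom[A]{X}{M}\to\Hom[A]{\Ker{d_X}}{M}$ is onto. Together with $\Ext[A]{1}{X}{M}=0$, this gives $\Ext[A]{1}{\Ker{d_X}}{M}=0$, with no completeness used.
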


\begin{proof}
By Lemma \ref{lem:orthogonal}, we have $\widetilde{\X}\subseteq \mathsf{dg}\mathcal{X}\cap \mathscr{E}.$
It is enough to prove the inclusion
\[
\mathsf{dg}\mathcal{X}\cap \mathscr{E}
\subseteq
\widetilde{\mathcal X}.\]
Let$(M,d_{M})\in \mathsf{dg}\mathcal{X}\cap\mathscr{E}.$
Since $(M,d_{M})$ is exact, there is an exact sequence of left $A$-modules
\begin{equation*}\label{comp}
\tag{\ref{prop:com}.1}
0\to \Ker d_{M}\xrightarrow{i_{M}} M\xrightarrow{\widetilde{d_{M}}} \Ker d_{M}\to 0.
\end{equation*}
We need to show that $\Ker d_{M}\in\mathcal X$.
Since $(\mathcal X,\mathcal Y)$ is a cotorsion pair, it suffices to prove that
\[
\operatorname{Ext}_A^1(\Ker d_{M},Y)=0
\]
for every $Y\in\mathcal Y$.

Now Fix $Y\in\mathcal Y$.
By Corollary \ref{cor:dw}, the left $A$-module $M$ belongs to $\mathcal X$. Hence
\[
\operatorname{Ext}_A^1(M,Y)=0.\]
Applying $\operatorname{Hom}_A(-,Y)$ to (\ref{comp}), we obtain the following exact sequence
\[
\operatorname{Hom}_A(M,Y)
\xrightarrow{i_{M}^{*}}
\operatorname{Hom}_A(\Ker d_{M},Y)
\longrightarrow
\operatorname{Ext}_A^1(\Ker d_{M},Y)
\longrightarrow
\operatorname{Ext}_A^1(M,Y)=0.
\]
Therefore, it remains to prove that every morphism
$f:\Ker d_{M}\longrightarrow Y$  extends to a morphism $M\to Y$.

Choose an injective resolution of $Y$,
\begin{equation}\label{comp.2}
\tag{\ref{prop:com}.2}
0\longrightarrow Y
\xrightarrow{\eta}
I^0
\xrightarrow{\delta^0}
I^1
\xrightarrow{\delta^1}
I^2
\xrightarrow{\delta^2}
\cdots .
\end{equation}
For convenience, put
\[
B^{-1}=Y,
\qquad
B^n=I^n \quad (n\geq 0),
\qquad
\delta^{-1}=\eta.
\]
Thus (\ref{comp.2}) can be written as the exact complex
\[
0\longrightarrow
B^{-1}
\xrightarrow{\delta^{-1}}
B^0
\xrightarrow{\delta^0}
B^1
\xrightarrow{\delta^1}
B^2
\longrightarrow\cdots .
\]
Since $(\mathcal X,\mathcal Y)$ is hereditary, the class $\mathcal Y$ is
coresolving. Consequently, all cosyzygies $\Ker\delta^n$ belong to $\mathcal Y$.

We now fold this injective resolution into a differential module.
Set
\[
T(Y)
=
\prod_{n\geq -1} B^n
=
Y\times\prod_{n\geq 0} I^n.
\]
Define an endomorphism
\[
\prod_{n\geq -1}\delta^{n}:T(Y)\longrightarrow T(Y)
\]
by
\[
\prod_{n\geq -1}\delta^{n}(b_{-1},b_0,b_1,b_2,\ldots)
=
\bigl(
0,
\delta^{-1}(b_{-1}),
\delta^0(b_0),
\delta^1(b_1),
\ldots
\bigr).
\]
Since $\delta^n\delta^{n-1}=0$ for every $n\geq 0$, one has $(\prod_{n\geq -1}\delta^{n})^2=0$. Hence $(T(Y),\prod_{n\geq -1}\delta^{n})$ is a differential left $A$-module. Moreover, by \cite[Corollary 3.8]{DN}, $(T(Y),\prod_{n\geq -1}\delta^{n})$ is exact.

Furthermore, $\Ker (\prod_{n\geq -1}\delta^{n}) \cong \prod_{n\geq -1}\Ker\delta^n.$
Since each $\Ker\delta^n$ belongs to $\mathcal Y$ and $\Y$ is closed under products, we obtain $\Ker (\prod_{n\geq -1}\delta^{n})\in\mathcal Y.$
It follows that $ (T(Y),\prod_{n\geq -1}\delta^{n})\in\widetilde{\mathcal Y}.$
Now let $ f:\Ker d_{M}\longrightarrow Y$
be arbitrary in $\Di$. We shall construct a morphism of differential left $A$-modules
\[
F:(M,d_{M})\longrightarrow (T(Y),\prod_{n\geq -1}\delta^{n}).
\]

Define first
\[
F_{-1}=f\widetilde{d_{M}}:M\longrightarrow Y.
\]
Since $d_{M}^2=0$, one has $F_{-1}d_{M}=0.$
Next, the map $
\eta f:Z\longrightarrow I^0$
extends, by injectivity of $I^0$, to a morphism $F_0:M\longrightarrow I^0$
such that $F_0|_{\Ker d_{M}}=\eta f.$
For every $m\in M$, we have $\widetilde{d_{M}}(m)\in \Ker d_{M}$, and hence
\[
F_0d_{M}(m)=F_0\widetilde{d_{M}}(m)
=
\eta f \widetilde{d_{M}}(m)
=
\eta F_{-1}(m).
\]
Thus $F_0d_{M}=\delta^{-1}F_{-1}.$

Suppose inductively that, for some $n\geq 0$, morphisms
\[
F_j:M\longrightarrow I^j
\qquad (0\leq j\leq n)
\]
have been constructed and satisfy
\[
F_jd=\delta^{j-1}F_{j-1}
\qquad (1\leq j\leq n).
\]
We construct $F_{n+1}:M\to I^{n+1}$. Consider the morphism
\[
\delta^nF_n:M\longrightarrow I^{n+1}.
\]
We claim that it vanishes on $\Ker d_{M}$. Indeed, if $m\in \Ker d_{M}$, then,
since $(M,d_{M})$ is exact, there exists $m'\in M$ such that $m=d_{M}(m')$. Therefore
\[
\delta^nF_n(m)
=
\delta^nF_nd_{M}(m')
=
\delta^n\delta^{n-1}F_{n-1}(m')
=
0.
\]
Hence $\delta^nF_n$ factors uniquely through the epimorphism
$\widetilde{d_{M}}:M\to \Ker d_{M}$, yielding a morphism
$\overline F_{n+1}:\Ker d_{M}\longrightarrow I^{n+1}$ in $\Mod{A}$
such that $\overline F_{n+1}\widetilde{d_{M}}=\delta^nF_n.$
Since $I^{n+1}$ is injective, $\overline F_{n+1}$ extends to a morphism
$F_{n+1}:M\longrightarrow I^{n+1}.$
Then $F_{n+1}d_{M}=\delta^nF_n.$
Proceeding inductively, we obtain a homomorphism
\[
F=\left( \begin{matrix} F_{-1} \\F_0\\F_1\\\vdots \\\end{matrix} \right)
:
M\longrightarrow T(Y)
\]
satisfying
\[
(\prod_{n\geq -1}\delta^{n})F=Fd_{M}.
\]
Thus $F:(M,d_{M})\longrightarrow (T(Y),\prod_{n\geq -1}\delta^{n})$ is a morphism in $\Di$.

Since $M\in \mathsf{dg}\mathcal{X}$ and
$T(Y)\in\widetilde{\mathcal Y},$ it follows from Lemmas \ref{homotopic} that
the morphism $F$ is null-homotopic. Hence there exists a morphism
$H:M\longrightarrow T(Y)$ in $\Mod{A}$
such that $$F=(\prod_{n\geq -1}\delta^{n})H+Hd_{M}.$$
Write
\[
H=\left( \begin{matrix} H_{-1} \\H_0\\H_1\\\vdots \\\end{matrix} \right),
\]
where $H_{-1}:M\longrightarrow Y$ is a morphism in $\Mod{A}.$
Since the $Y$-component of $\prod_{n\geq -1}\delta^{n}$ is zero, taking the $(-1)$-st component of
the equality $F=(\prod_{n\geq -1}\delta^{n})H+Hd_{M}$ gives
$F_{-1}=H_{-1}d_{M}.$
On the other hand, $F_{-1}=f\widetilde{d_{M}}.$
Hence $f\widetilde{d_{M}}=H_{-1}d_{M}.$
Since $(M,d_{M})$ is exact, it follows that $H_{-1}d_{M}=H_{-1}\widetilde{d_{M}}$.
So $f\widetilde{d_{M}}=H_{-1}\widetilde{d_{M}}$. Note that $\widetilde{d_{M}}:M\to \Ker d_{M}$ is an epimorphism in $\Mod{A}.$ Then $H_{-1}:M\to Y$ is an extension of $f:\Ker d_{M}\to Y$.

We have therefore proved that $i_{M}^{*}:\operatorname{Hom}_A(M,Y)
\to
\operatorname{Hom}_A(\Ker d_{M},Y)$
is surjective. Consequently, $\operatorname{Ext}_A^1(\Ker d_{M},Y)=0.$
Since $Y\in\mathcal Y$ was arbitrary, we conclude that $\Ker d_{M}\in{}^\perp\mathcal Y=\mathcal X.$
As $(M,d_{M})$ is exact, this shows that $(M,d_{M})\in\widetilde{\mathcal X}.$
Therefore $\mathsf{dg}\mathcal{X}\cap\mathscr{E}
\subseteq
\widetilde{\mathcal X},$
and hence $
\mathsf{dg}\mathcal{X}\cap\mathscr{E}
=
\widetilde{\mathcal X}.$
\end{proof}

\begin{lem}\label{lem:compatible}
If $(\mathcal{X},\mathcal{Y})$ is a  cotorsion pair on $\Mod{A},$ then $(\mathsf{dg}\mathcal{X},\widetilde{\mathcal{Y}})$ and $(\widetilde{\mathcal{X}},\mathsf{dg}\mathcal{Y})$ are compatible cotorsion pairs. In particular,
$$\mathsf{dg}\mathcal{X}\cap\widetilde{\mathcal{Y}}= \Big\{(X,d_{X})\in \Di\ |\ (X,d_{X})\ \text{is contractible and}\ X\in \mathcal{X}\cap\mathcal{Y}\Big\}=\widetilde{\mathcal{Y}}\cap\mathsf{dg}\mathcal{X}.$$
\end{lem}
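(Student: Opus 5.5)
The plan is to verify the two conditions in the definition of compatibility with $(\mathcal U,\mathcal V)=(\mathsf{dg}\mathcal{X},\widetilde{\mathcal{Y}})$ and $(\mathcal X,\mathcal Y)=(\widetilde{\mathcal{X}},\mathsf{dg}\mathcal{Y})$. Both are cotorsion pairs by Proposition \ref{prop:cotorsion pair}. The Ext condition $\Ext[\Di]{1}{\widetilde{\mathcal{X}}}{\widetilde{\mathcal{Y}}}=0$ is exactly Lemma \ref{lem:orthogonal}. It therefore remains to show that both intersections $\mathsf{dg}\mathcal{X}\cap\widetilde{\mathcal{Y}}$ and $\widetilde{\mathcal{X}}\cap\mathsf{dg}\mathcal{Y}$ coincide with the class $\mathcal{P}$ of contractible $(X,d_X)$ with $X\in\mathcal{X}\cap\mathcal{Y}$. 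Hereditariness should not be needed, so I will avoid Proposition \ref{prop:com}.

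\emph{Step 1: $\mathcal{P}$ lies in both intersections.} A contractible object is isomorphic to $F_*(M)=(M\oplus M,\left(\begin{smallmatrix}0&0\\1&0\end{smallmatrix}\right))$. With $M\in\mathcal{X}\cap\mathcal{Y}$, Lemma \ref{lem:contain contractible objects}(b),(c) give $F_*(M)\in\mathsf{dg}\mathcal{X}\cap\widetilde{\mathcal{X}}$. Moreover $F_*(M)$ is exact with $\Ker=\Coker\cong M$, so Notation \ref{nota} places it in $\widetilde{\mathcal{Y}}$. For $\mathsf{dg}\mathcal{Y}$, I will note that $G_*(M)$ is also of the form $(M\oplus M,\text{nilpotent shift})$ and hence isomorphic to $F_*(M)$; alternatively one can use $\widetilde{\mathcal{Y}}\subseteq\mathsf{dg}\mathcal{Y}$, which follows from Lemma \ref{lem:orthogonal}. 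Either way Lemma \ref{lem:contain contractible objects}(d) gives $F_*(M)\in\mathsf{dg}\mathcal{Y}$.

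\emph{Step 2: $\mathsf{dg}\mathcal{X}\cap\widetilde{\mathcal{Y}}\subseteq\mathcal{P}$.} Take $(M,d_M)$ in the intersection. By Corollary \ref{cor:dw}(a), $M\in\mathcal{X}$. By Notation \ref{nota}, $(M,d_M)$ is exact with $\Ker d_M\in\mathcal{Y}$, and the sequence $0\to\Ker d_M\to M\to\Ker d_M\to 0$ together with extension closure gives $M\in\mathcal{Y}$. By Remark \ref{rm:suspension}, $(M,-d_M)\in\widetilde{\mathcal{Y}}$, so $\Ext[\Di]{1}{(M,d_M)}{(M,-d_M)}=0$. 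Since $\Ext[A]{1}{M}{M}=0$, every extension in $\Di$ between these objects is objectwise split, so this Ext group equals $\Ext[dw]{1}{(M,d_M)}{(M,-d_M)}$. Lemma \ref{homotopic} identifies the latter with $\mathrm{Hom}_{\underline{\sf Dif}(A)}((M,d_M),(M,d_M))$. Hence $\id_M$ is null-homotopic, i.e.\ $(M,d_M)$ is contractible.

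\emph{Step 3: $\widetilde{\mathcal{X}}\cap\mathsf{dg}\mathcal{Y}\subseteq\mathcal{P}$.} This is the dual argument. Corollary \ref{cor:dw}(b) gives $M\in\mathcal{Y}$. Exactness together with $\Coker d_M\in\mathcal{X}$ gives $M\in\mathcal{X}$. The suspension $(M,-d_M)$ lies in $\mathsf{dg}\mathcal{Y}$ by Remark \ref{rm:suspension}, so $\Ext[\Di]{1}{(M,d_M)}{(M,-d_M)}=0$, and the same identification shows $\id$ is null-homotopic.

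The main point needing care is the identification $\Ext[\Di]{1}=\Ext[dw]{1}$ in Steps 2 and 3, which uses only $\Ext[A]{1}{M}{M}=0$, exactly as in the proof of Lemma \ref{lem:orthogonal}. One must also check that Remark \ref{rm:suspension} really gives suspension-closure of $\widetilde{\mathcal{Y}}$ and $\mathsf{dg}\mathcal{Y}$ without extra hypotheses. For $\widetilde{\mathcal{Y}}$ this is immediate, since $\Ker(-d)=\Ker d$. For $\mathsf{dg}\mathcal{Y}$ it is dual to the computation given in the remark.
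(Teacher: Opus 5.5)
Your proposal is correct and follows essentially the same route as the paper: Lemma \ref{lem:orthogonal} gives the Ext condition, membership of $F_*(M)$ in the relevant classes gives the easy inclusion, and the suspension trick combined with Lemma \ref{homotopic} shows that $\id$ is null-homotopic. Your write-up is somewhat more careful than the paper's, which leaves the Ext condition implicit, justifies $M\in\mathcal{X}\cap\mathcal{Y}$ only by citation rather than via Corollary \ref{cor:dw} and extension closure, and writes $\widetilde{\mathcal{Y}}\cap\mathsf{dg}\mathcal{X}$ where $\widetilde{\mathcal{X}}\cap\mathsf{dg}\mathcal{Y}$ is intended.
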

\begin{proof}
By Proposition \ref{prop:cotorsion pair}, we know that $(\mathsf{dg}\mathcal{X},\widetilde{\mathcal{Y}})$ and $(\WX,\mathsf{dg}\mathcal{Y})$ are cotorsion pairs in $\Di$.
Next we show that
$$\mathsf{dg}\mathcal{X}\cap\widetilde{\mathcal{Y}}= \Big\{(X,d_{X})\in \Di\ |\ (X,d_{X})\ \text{is contractible and}\ X\in \mathcal{X}\cap\mathcal{Y}\Big\}.$$
Take a contractible object $(X,d_{X})$ with $X\in \mathcal{X}\cap\mathcal{Y}.$
Then $$(X,d_{X})\cong F_{*}(M)=(M\oplus M,\left( \begin{smallmatrix} 0 & 0 \\1 & 0 \\\end{smallmatrix} \right))$$ for some $M\in\Mod{A}$ by \cite[1.5 and Proposition 1.8]{LLAHBF2007} or \cite[Proposition 3.6]{DN}.
Hence $M\in \mathcal{X}\cap\mathcal{Y}.$
It follows from Lemma \ref{lem:contain contractible objects} that $(X,d_{X})\in \mathsf{dg}\mathcal{X}\cap\widetilde{\mathcal{Y}}.$
On the other hand, suppose that $(M,d_{M})\in \mathsf{dg}\mathcal{X}\cap\widetilde{\mathcal{Y}}$.
Then, by Remark \ref{rm:suspension}, $\Sigma(M,d_{M})=(M,-d_{M})\in\mathsf{dg}\mathcal{X}\cap\widetilde{\mathcal{Y}}.$
Since $(\mathsf{dg}\mathcal{X},\widetilde{\mathcal{Y}})$ is a cotorsion pair, one has $\Ext[\Di]{1}{(M,d_{M})}{(M,-d_{M})}=0$, and so Lemma \ref{homotopic} gives us
$$ \Ext[dw]{1}{(M,d_{M})}{(M,-d_{M})}\cong \mathrm{Hom}_{\underline{\sf Dif}(A)}((M,d_{M}),(M,d_{M}))=0.$$
It follows that $(M,d_{M})$ is a contractible object.
By \cite[Proposition 1.8]{LLAHBF2007} or \cite[Proposition 3.6]{DN},
we have $M\in \mathcal{X}\cap \mathcal{Y}.$

By a dual argument as above, we can show that $$\widetilde{\mathcal{Y}}\cap \mathsf{dg}\mathcal{X}= \Big\{(X,d_{X})\in \Di\ |\ (X,d_{X})\ \text{is contractible and}\ X\in \mathcal{X}\cap\mathcal{Y}\Big\}.$$
Thus we complete the proof.
\end{proof}

\begin{thm} \label{thm:main result in D}
Let $(\mathcal{X},\mathcal{Y})$ be a complete hereditary cotorsion pair in $\Mod{A}$.
Then there is an abelian model structure on $\Di$ whose cofibrant, fibrant, and trivial objects are $\mathsf{dg}\mathcal{X}$, $\mathsf{dg}\mathcal{Y}$, and
$\mathscr{E}$ , respectively. Consequently, its homotopy category is equivalent to the $\Q$-shaped derived category, where $\Q$ is the path category of the Jordan quiver
\vspace{-2mm}
\[
\begin{tikzpicture}[baseline=-0.5ex]
\node (v) at (0,0) {$\bullet$};
\draw[->]
(v) to[out=35,in=-35,looseness=7]
node[right] {$\varepsilon$} (v);
\end{tikzpicture}
\]
\vspace{-2mm}
subject to the relation $\varepsilon^{2}=0$.
\end{thm}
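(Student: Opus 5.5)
We will apply Gillespie's theorem \cite[Theorem 1.1]{G15} (recalled in the Remark after the definition of compatible cotorsion pairs) to the pairs $(\mathcal{U},\mathcal{V})=(\mathsf{dg}\mathcal{X},\widetilde{\mathcal{Y}})$ and $(\mathcal{X}',\mathcal{Y}')=(\widetilde{\mathcal{X}},\mathsf{dg}\mathcal{Y})$ in $\Di$. Three inputs are needed, all essentially available.

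\emph{Completeness.} Since $(\X,\Y)$ is complete, Proposition \ref{prop:complete cotorsion pair} shows that both induced pairs are complete.

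\emph{Heredity.} Since $(\X,\Y)$ is hereditary, Corollary \ref{cor:hereditary cotorsion pair} shows that both induced pairs are hereditary.

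\emph{Compatibility.} Lemma \ref{lem:compatible} gives $\mathsf{dg}\mathcal{X}\cap\widetilde{\mathcal{Y}}=\widetilde{\mathcal{X}}\cap\mathsf{dg}\mathcal{Y}$. If this is not literally what that lemma states, we will verify the equality directly: both sides equal the contractible objects $F_*(M)$ with $M\in\X\cap\Y$. For the right-hand side we use the dual argument together with Lemma \ref{lem:contain contractible objects}(c),(d). The remaining condition is $\Ext[\Di]{1}{\widetilde{\mathcal{X}}}{\widetilde{\mathcal{Y}}}=0$, which is exactly Lemma \ref{lem:orthogonal}.

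Gillespie's theorem then produces an abelian model structure with cofibrant objects $\mathsf{dg}\mathcal{X}$ and fibrant objects $\mathsf{dg}\mathcal{Y}$. Its trivial class is
\[
\mathcal{W}=\{M \mid \exists\ 0\to V\to X\to M\to 0 \text{ with } X\in\widetilde{\mathcal{X}},\ V\in\widetilde{\mathcal{Y}}\}.
\]
The main step is to show $\mathcal{W}=\mathscr{E}$.

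For $\mathcal{W}\subseteq\mathscr{E}$: objects of $\widetilde{\mathcal{X}}$ and $\widetilde{\mathcal{Y}}$ are exact by Proposition \ref{prop:KC}. Since $\mathscr{E}$ is thick (it is the trivial class of the Hovey triples in \ref{exact object}, cf. \cite[Theorem 4.4]{HJ21}), it is closed under cokernels of monomorphisms, so $M\in\mathscr{E}$.

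For $\mathscr{E}\subseteq\mathcal{W}$: let $M\in\mathscr{E}$. By completeness of $(\mathsf{dg}\mathcal{X},\widetilde{\mathcal{Y}})$ there is a sequence $0\to V\to X\to M\to 0$ with $X\in\mathsf{dg}\mathcal{X}$ and $V\in\widetilde{\mathcal{Y}}$. Then $V$ is exact, and $M$ is exact, so $X$ is exact by thickness of $\mathscr{E}$. Hence $X\in\mathsf{dg}\mathcal{X}\cap\mathscr{E}=\widetilde{\mathcal{X}}$ by Proposition \ref{prop:com}, which is where heredity is used. This gives $M\in\mathcal{W}$.

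This identification of $\mathcal{W}$ with $\mathscr{E}$ is the main obstacle, but Proposition \ref{prop:com} carries it. The one point to check is that the hereditary hypothesis is invoked in exactly that form.

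For the homotopy category: the model structure just built and the Holm--J{\o}rgensen projective model structure $({}^{\bot}\mathscr{E},\mathscr{E},\Mod{\Q,A})$ have the same class of trivial objects $\mathscr{E}$. Since an abelian model structure's weak equivalences are determined by its trivial objects (maps factoring as a monomorphism with trivial cokernel followed by an epimorphism with trivial kernel, \cite{Ho02}), the two structures have the same weak equivalences. The homotopy categories are therefore both the localization $\Di[\mathcal{W}^{-1}]$. By \cite{HJ21,HJ23}, this localization is the $\Q$-shaped derived category for $\Q$ the Jordan quiver with $\varepsilon^2=0$.
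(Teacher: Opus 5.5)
Your proposal is correct and follows the paper's proof. You apply Gillespie's theorem to $(\mathsf{dg}\mathcal{X},\widetilde{\mathcal{Y}})$ and $(\widetilde{\mathcal{X}},\mathsf{dg}\mathcal{Y})$ using Corollary \ref{cor:hereditary cotorsion pair}, Proposition \ref{prop:complete cotorsion pair} and Lemmas \ref{lem:orthogonal}, \ref{lem:compatible}; you then get $\mathcal{W}=\mathscr{E}$ from thickness of $\mathscr{E}$ together with Proposition \ref{prop:com}, and compare with the Holm--J{\o}rgensen trivial class. The differences are cosmetic: you use the special precover $0\to V\to X\to M\to 0$ where the paper uses the preenvelope $0\to M\to Y\to X\to 0$ (both work since $\mathscr{E}$ is thick), and you rightly notice that the last set displayed in Lemma \ref{lem:compatible} should read $\widetilde{\mathcal{X}}\cap\mathsf{dg}\mathcal{Y}$.
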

\begin{proof}
By Corollary \ref{cor:hereditary cotorsion pair}, Proposition \ref{prop:complete cotorsion pair} and Lemma \ref{lem:compatible},
$(\widetilde{\mathcal{X}},\mathsf{dg}\mathcal{Y})$ and $(\mathsf{dg}\mathcal{X},\widetilde{\mathcal{Y}})$ are complete hereditary, and compatible cotorsion pairs in $\Di$.
By \cite[Theorem 1.1]{G15} or Remark \ref{rmk:projective model structure}, it remains to verify that the associated class $\mathcal W$ of trivial objects coincides with $\mathscr{E}.$
Here
\begin{align*}
\mathcal{W} = \Big\{(M,d_M)\in \Di \mid {}&
\exists\ s.e.s\
 0\to(Y,d_Y)\to (X,d_X)\to (M,d_M)\to 0\\
 &~~\text{with } (Y,d_Y)\in \widetilde{\mathcal{Y}} ~~\text{and}~~(X,d_X)\in \widetilde{\mathcal{X}} \Big\},\\
 =\Big\{(M,d_M)\in \Di \mid {}&
\exists\ s.e.s\
0\rightarrow (M,d_M)\rightarrow (Y',d_{Y'})\rightarrow (X',d_{X'})\rightarrow 0 \\
 &~~\text{with } (Y',d_{Y'})\in \widetilde{\mathcal{Y}} ~~\text{and}~~(X',d_{X'})\in \widetilde{\mathcal{X}} \Big\}.
\end{align*}
Since $\widetilde{\mathcal X},\widetilde{\mathcal Y}\subseteq\mathscr{E}$
and $\mathscr{E}$ is thick, either of the above short exact sequences
shows immediately that $
\mathcal W\subseteq\mathscr{E}.$

Conversely, let $(M,d_M)\in\mathscr{E}$.
  Since $(\mathsf{dg}\mathcal{X},\widetilde{\mathcal{Y}})$ is a complete cotorsion pair in $\Di$ by Lemma \ref{prop:complete cotorsion pair}, there is an exact sequence in $\Di$
$$0\to (M,d_{M})\to (Y,d_{Y})\to (X,d_{X})\to 0$$
with $(Y,d_{Y})\in \widetilde{\mathcal{Y}}$ and $(X,d_{X})\in\mathsf{dg}\mathcal{X}.$
Since both $(M,d_M)$ and $(Y,d_Y)$ belong to $\mathscr{E}$, and
$\mathscr{E}$ is thick by \cite[Theorem 4.1]{HJ21}, it follows that $(X,d_{X})$ is exact.
By Lemma \ref{prop:com}, $
\mathsf{dg}\mathcal X\cap\mathscr{E}=\widetilde{\mathcal X}$,
and hence $
(X,d_X)\in\widetilde{\mathcal X}.$
Therefore $(M,d_M)\in\mathcal W$, proving that $
\mathscr{E}\subseteq\mathcal W.$
Thus $\mathcal W=\mathscr{E}.$

Finally, by \cite[Theorem 6.1]{HJ21}, $\mathscr{E}$ is precisely
the class of trivial objects in the model structure defining the
$\Q$-shaped derived category when $\Q$ is the path category of the
Jordan quiver
\vspace{-2.5mm}
\begin{center}
\begin{tikzpicture}[baseline=-0.5ex]
\node (v) at (0,0) {$\bullet$};
\draw[->]
(v) to[out=35,in=-35,looseness=7]
node[right] {$\varepsilon$} (v);
\end{tikzpicture}
\end{center}
\vspace{-2.5mm}
subject to the relation $\varepsilon^{2}=0$.
Consequently, the homotopy category of the above model structure is
equivalent to the corresponding $\Q$-shaped derived category.
\end{proof}



\bibliographystyle{amsplain-nodash}

\def\cprime{$'$}
  \providecommand{\arxiv}[2][AC]{\mbox{\href{http://arxiv.org/abs/#2}{\sf
  arXiv:#2 [math.#1]}}}
  \providecommand{\oldarxiv}[2][AC]{\mbox{\href{http://arxiv.org/abs/math/#2}{\sf
  arXiv:math/#2
  [math.#1]}}}\providecommand{\MR}[1]{\mbox{\href{http://www.ams.org/mathscinet-getitem?mr=#1}{#1}}}
  \renewcommand{\MR}[1]{\mbox{\href{http://www.ams.org/mathscinet-getitem?mr=#1}{#1}}}
\providecommand{\bysame}{\leavevmode\hbox to3em{\hrulefill}\thinspace}
\providecommand{\MR}{\relax\ifhmode\unskip\space\fi MR }
\providecommand{\MRhref}[2]{%
  \href{http://www.ams.org/mathscinet-getitem?mr=#1}{#2}
}
\providecommand{\href}[2]{#2}

\end{document}